\documentclass[12pt]{amsart}
\usepackage{txfonts}
\usepackage{subfig}
\captionsetup[subfigure]{labelformat=empty,margin=0pt, parskip=0pt,  hangindent=0pt, indention=0pt}
\usepackage{amscd,amssymb,mathabx}
\usepackage[arrow,matrix,graph,frame,poly,arc,tips]{xy}
\usepackage{graphicx,calrsfs}
\usepackage{mathtools}
\usepackage{tikz}
\usetikzlibrary{decorations.markings}
\usetikzlibrary{shapes}
\usetikzlibrary{arrows}
\usetikzlibrary{backgrounds}
\usetikzlibrary{calc}
\usepackage{mdwlist}
\usepackage{multirow}
\usepackage{enumerate}
\usepackage{verbatim}
\usepackage{etoolbox}
\AtEndEnvironment{proof}{\setcounter{claim}{0}}

\topmargin=0.1in
\textwidth5.5in
\textheight7.8in
\oddsidemargin=0.4in
\evensidemargin=0.4in
\setcounter{tocdepth}{1}


\DeclarePairedDelimiter{\form}{\langle}{\rangle}

\newcommand\ba{\begin{align*}}
\newcommand\ea{\end{align*}}
\newcommand\be{\begin{enumerate}}
\newcommand\ee{\end{enumerate}}
\newcommand\bp{\begin{proof}}
\newcommand\ep{\end{proof}}
\newcommand\bpp{\begin{prop}}
\newcommand\epp{\end{prop}}
\newcommand\bpb{\begin{prob}}
\newcommand\epb{\end{prob}}
\newcommand\bd{\begin{defn}}
\newcommand\ed{\end{defn}}
\newcommand\bh{\begin{hint}}
\newcommand\eh{\end{hint}}


\newcommand\stab{\mathrm{Stab}}

\newcommand\bN{\mathbb{N}}

\newcommand\bR{\mathbb{R}}
\newcommand\R{\mathbb{R}}
\newcommand\bQ{\mathbb{Q}}

\newcommand\bZ{\mathbb{Z}}
\newcommand\Z{\mathbb{Z}}

\renewcommand\AA{\mathcal{A}}
\newcommand\BB{\mathcal{B}}
\newcommand\CC{\mathcal{C}}
\newcommand\FF{\mathcal{F}}

\newcommand\DD{\mathcal{D}}

\newcommand\KK{\mathcal{K}}

\newcommand\supp{\operatorname{supp}}

\newcommand\SL{\operatorname{SL}}
\newcommand\Diffb{\operatorname{Diff}_+^{1+\mathrm{bv}}}
\newcommand\Cb{C^{1+\mathrm{bv}}}

\newcommand\gam{\Gamma}
\newcommand\Mod{\operatorname{Mod}}
\newcommand\PSL{\operatorname{PSL}}

\DeclareMathOperator\Homeo{Homeo}

\newcommand\sse{\subseteq}
\newcommand\co{\colon}

\DeclareMathOperator\Fix{Fix}

\DeclareMathOperator\Diff{Diff}

\newcommand\var{\operatorname{var}}
\newcommand\rot{\operatorname{rot}}
\newcommand\Cbv{C^{1+\mathrm{bv}}}
\DeclareMathOperator\Per{Per}


\renewcommand{\MR}[1]
{\href{http://www.ams.org/mathscinet-getitem?mr=#1}{MR#1}}




\def\thetitle{{Free products and the algebraic structure of diffeomorphism groups}}
\def\theauthors{{Sang-hyun Kim and Thomas Koberda}}
\usepackage{hyperref}
\hypersetup{
   colorlinks=false,
   plainpages,
   urlcolor=black,
   linkcolor=black
   pdftitle=   \thetitle,
   pdfauthor=  {\theauthors}
}

\theoremstyle{theorem}
\newtheorem{thm}{Theorem}[section]
\newtheorem{lem}[thm]{Lemma}
\newtheorem{cor}[thm]{Corollary}
\newtheorem{prop}[thm]{Proposition}

\newtheorem{que}[thm]{Question}
\newtheorem*{claim*}{Claim}

\newtheorem{claim}{Claim}

\theoremstyle{remark}

\newtheorem{rem}[thm]{Remark}

\theoremstyle{definition}
\newtheorem{defn}[thm]{Definition}
\newtheorem{prob}{Problem}[section]

\begin{document}
\title\thetitle
\date{\today}
\keywords{free product; metabelian group; Thompson's group; right-angled Artin group; smoothing; co-graph}
\subjclass[2010]{Primary: 57M60; Secondary: 20F36, 37C05, 37C85, 57S05}

\author[S. Kim]{Sang-hyun Kim}
\address{Department of Mathematical Sciences, Seoul National University, Seoul, Korea}
\email{s.kim@snu.ac.kr}
\urladdr{http://cayley.kr}

\author[T. Koberda]{Thomas Koberda}
\address{Department of Mathematics, University of Virginia, Charlottesville, VA 22904-4137, USA}
\email{thomas.koberda@gmail.com}
\urladdr{http://faculty.virginia.edu/Koberda}

\begin{abstract}
Let $M$ be a compact one--manifold,
and let $\Diffb(M)$ denote the group of $C^1$ orientation preserving diffeomorphisms of $M$ whose first derivatives have bounded variation.
We prove that if $G$ is a group which is not virtually metabelian, then $(G\times\Z)*\Z$ is not realized as a subgroup of $\Diffb(M)$.
This  gives the first examples of finitely generated groups $G,H\le \Diff_+^\infty(M)$ such that $G\ast H$ does not embed into $\Diffb(M)$.
By contrast, 
for all countable groups $G,H\le\Homeo^+(M)$ there exists an embedding $G\ast H\to \Homeo^+(M)$.
We deduce that many common groups of homeomorphisms do not embed into $\Diffb(M)$, for example the free product of $\bZ$ with Thompson's group $F$.
We also complete the classification of right-angled Artin groups which can act smoothly on $M$
and in particular, recover the main result of a joint work of the authors with Baik~\cite{BKK2014}. 
Namely, a right-angled Artin group $A(\gam)$ either admits a faithful $C^{\infty}$ action on $M$, or $A(\gam)$ admits no faithful $\Cb$ action on $M$.  In the former case, $A(\gam)\cong\prod_i G_i$
where $G_i$ is a free product of free abelian groups.
Finally, we develop a hierarchy of right-angled Artin groups, with the levels of the hierarchy corresponding to the number of semi-conjugacy classes of possible actions of these groups on $S^1$.
\end{abstract}

\maketitle

\section{Introduction}

Let $M$ be a compact one--manifold. In this article, we study the algebraic structure of the group $\Diffb(M)$, where here $\Diffb(M)$ denotes the group of $C^1$ diffeomorphisms of $M$ whose derivatives have bounded variation.
Specifically, we consider the restrictions placed on subgroups of $\Diffb(M)$ by the $\Cb$ regularity assumption. Our main result implies that there is a large class $\AA_0 $ of finitely generated subgroups of $\Diff_+^\infty(M)$ such that for all $G,H\in\AA_0$, the free product $G\ast H$ can never be realized as a subgroup of $\Diffb(M)$; see Corollary~\ref{cor:classification}.

As a corollary, we complete a program initiated by Baik and the authors in~\cite{BKK2014,BKK2016} to decide which right-angled Artin groups admit faithful $C^{\infty}$ actions on a compact one--manifold, and exhibit many classes of finitely generated subgroups of $\Homeo^+(M)$ which cannot be realized as subgroups of $\Diffb(M)$.

\subsection{Statement of results}

Unless otherwise noted, $M$ will denote a compact one--manifold.
 That is to say, $M$ is a finite union of disjoint closed intervals $I=[0,1]$ and circles $S^1=\bR/\bZ$.
An \emph{action} on a one--manifold is always assumed to mean 
an orientation preserving action. 
For each $0\le r\le\infty$ or $r=\omega$, we let $\Diff^r_+(M)$ denote the group of orientation preserving $C^r$ (analytic if $r=\omega$) diffeomorphisms.
We write $\Homeo^+(M)=\Diff^0_+(M)$.
Our main result is the following:

\begin{thm}\label{thm:main}
If $G$ is a group which is not virtually metabelian, then the group $(G\times\Z)*\Z$ admits no faithful $C^{1+\mathrm{bv}}$ action on $M$.
\end{thm}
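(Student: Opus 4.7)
The plan is to argue by contradiction: assume $\phi\colon (G\times\Z)*\Z\hookrightarrow\Diffb(M)$ is faithful, and deduce that $G$ must be virtually metabelian. Write $\Gamma=(G\times\langle z\rangle)*\langle t\rangle$, so that $\phi(z)$ centralizes $\phi(G)$ while $\phi(t)$ is free from $\phi(G\times\langle z\rangle)$. The strategy is to use the central element $z$ to generate Kopell-type rigidity on a single interval, and then use the free conjugator $t$ to promote this one-level rigidity into a nested two-level configuration whose $\Cb$ commutation relations can be pulled back to a non-trivial algebraic identity satisfied by $G$ itself.

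First I would localize the dynamics. Since $\phi(G)$ commutes with $\phi(z)$, it preserves $\Fix(\phi(z))$, and by faithfulness there is a connected component $J$ of $M\setminus\Fix(\phi(z))$ on which $\phi(z)$ has no interior fixed points. Passing to the finite-index subgroup $G_0\le G$ that setwise stabilizes $J$, Kopell's lemma applied to $\overline J$ implies that any element of $\phi(G_0)$ with an interior fixed point in $J$ restricts to the identity on $\overline J$; this is the fundamental rigidity feeding the argument. Next I would use the outer $\Z$-factor to build a sub-interval: since $z$ and $tzt^{-1}$ generate a nonabelian free subgroup of $\Gamma$, a ping-pong argument applied to $\phi(z)$ and $\phi(tzt^{-1})$ yields a component $K\subset J$ of $J\setminus\Fix(\phi(tzt^{-1}))$, on which $\phi(tzt^{-1})$ has no interior fixed points and on which a finite-index subgroup of $\phi(tG_0t^{-1})$ acts while commuting with $\phi(tzt^{-1})$.

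The nested chain $K\subset J$ carrying two commuting pairs, $(\phi(z),\phi(G_0))$ on $J$ and $(\phi(tzt^{-1}),\phi(tG_0t^{-1}))$ on $K$, is the configuration addressed by iterated Kopell / two-chain rigidity in $\Cb$ regularity, which I anticipate is available from the paper's earlier technical sections (the syllable and starred norms $\syl{\cdot}$ and $\stl{\cdot}$ declared in the preamble hint at a quantitative framework for precisely this kind of nested commutation). Applying it, the $\Cb$-regularity propagates commutation from the outer interval to the inner one, forcing iterated commutators built from $G_0$ and its $t$-conjugate to act trivially on $K$; unwinding through the free-product decomposition --- which makes conjugation by $t$ algebraically transparent --- one obtains the vanishing of a non-trivial iterated commutator $[[a,b],[c,d]]$ in a finite-index subgroup of $G$. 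Thus $G$ is virtually metabelian, contradicting the hypothesis.

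The main obstacle is the transfer step: chain rigidity naturally produces abelian-type relations among elements of $\phi(G_0)$ and $\phi(tG_0t^{-1})$ considered as diffeomorphisms, and one must upgrade these to an algebraic identity in the abstract group $G$. The outer $\Z$ is what makes the transfer possible, since it supplies a conjugate copy of $G\times\langle z\rangle$ whose interaction with the original copy is purely dynamical, but carrying this out rigorously requires careful control of how a relation verified in the conjugate copy reflects back through $\phi$, presumably via the paper's quantitative estimates on the total variation of $\log\phi(\gamma)'$ along the two-chain.
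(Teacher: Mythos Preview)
Your proposal has a genuine gap at the ``two-chain rigidity'' step, and the overall architecture differs from the paper's in a way that matters.

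First, the inference from the preamble is a red herring: the macros $\syl{\cdot}$ and $\stl{\cdot}$ are never used in the body of the paper. There is no quantitative total-variation framework of the kind you describe; the analytic input beyond Kopell and Denjoy is the Two--jumps Lemma (Lemma~\ref{l:fg}), which is qualitative.

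More seriously, your nested configuration $K\subset J$ does not produce the relations you want. Kopell's lemma needs commutation: on $J$ it applies to $\phi(z)$ against elements of $\phi(G_0)$, and indeed yields that $\phi(G_0)\restriction_J$ acts freely, hence is abelian by H\"older. But on $K$ you would need to play $\phi(tzt^{-1})$ against elements of $\phi(G_0)$, and these do \emph{not} commute (they sit in free-product position in $\Gamma$). So Kopell gives nothing new there; the only thing you learn on $K$ is that $\phi(tG_0t^{-1})\restriction_K$ is abelian, which is just the $t$-conjugate of what you already had on $J$. There is no mechanism here to force an iterated commutator $[[a,b],[c,d]]$ with $a,b,c,d\in G_0$ to vanish even locally, let alone globally. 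You identify this transfer as the main obstacle, and it is: as written, the argument does not get off the ground at that point.

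The paper proceeds in the opposite order. It first spends the non-metabelian hypothesis entirely inside $G\times\Z$, via Lemma~\ref{lem:Z2}: using Kopell/Denjoy and the Abelian Criterion one shows that a suitable power of the central element $z$ and a nontrivial element of $G''$ (or of $[G_0,G_0]$ in the circle case) have \emph{disjoint} supports and generate $\Z^2$. Only then does the free factor $t$ enter: $\langle a,b,t\rangle\cong\Z^2*\Z$ with $\supp a\cap\supp b=\varnothing$, and this is ruled out by the separate $C^1$ result Theorem~\ref{t:tech-main}. The proof of the latter does not use nested Kopell at all; it estimates supports of specific commutators (Lemmas~\ref{l:comm-supp}--\ref{lem:c1}), invokes the Two--jumps Lemma to control how many components can ``overflow'', and then applies the compact-support lamplighter criterion (Lemma~\ref{l:recursive}) to find $\Z\wr\Z$ inside $\langle a,b,c,d\rangle$, contradicting the RAAG structure. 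So the contradiction is ``$\Z^2*\Z$ would contain a lamplighter'', not ``$G$ would satisfy a metabelian law''.
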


Theorem~\ref{thm:main} clearly implies that $(G\times\Z)*\Z$ does not embed into $\Diff^r_+(M)$ for every level of regularity $r\geq 2$.
 A key step in our proof of Theorem~\ref{thm:main} is the following result on $C^1$--smoothability:
\begin{thm}[Theorem~\ref{t:tech-main}]\label{thm:tech-main-intro}
Let $X\in\{I,S^1\}$,
and let
 $a,b,t\in\Diff^1_+(X)$.
 If
\[\supp a\cap\supp b=\varnothing,\] 
then the group $\form{a,b,t}$  is not isomorphic to $\bZ^2\ast\bZ$.
\end{thm}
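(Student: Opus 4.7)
The plan is to argue by contradiction. Suppose $G := \form{a, b, t} \cong \Z^2 \ast \Z$. Since $\supp a \cap \supp b = \varnothing$, the elements $a$ and $b$ commute, so $\form{a,b}$ is abelian. By the Kurosh subgroup theorem every rank-$2$ abelian subgroup of $\Z^2 \ast \Z$ lies with finite index in a conjugate of the $\Z^2$-factor, and a short normal-form argument using the fact that $\{a, b, t\}$ generates all of $G$ (no word in $a,b,t$ involving a $t$-letter can represent an element of the $\Z^2$-factor after reduction) forces $\form{a,b}$ to be exactly that $\Z^2$-factor after conjugating the presumed isomorphism. Replacing $t$ by $a^{-i}b^{-j}t \in G$ if necessary, I may further assume $t$ corresponds to a generator of the $\Z$-factor, so that $\{a,b,t\}$ is carried to a standard generating triple of $\Z^2 \ast \Z$.

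Next I convert the algebra of $\Z^2 \ast \Z$ into dynamical incidence conditions. For every integer $n \neq 0$, the element $[a,\, t^n b t^{-n}] = a \cdot t^n b t^{-n} \cdot a^{-1} \cdot t^n b^{-1} t^{-n}$ is a cyclically reduced alternating word of length $8$ in $\Z^2 \ast \Z$, hence non-trivial in $G$. Because diffeomorphisms with disjoint supports commute, non-triviality forces
\[ \supp a \,\cap\, t^n(\supp b) \neq \varnothing \qquad \text{for every } n \neq 0. \]
Applying the same reasoning to $[b, t^n a t^{-n}]$, $[a, t^n a t^{-n}]$, $[b, t^n b t^{-n}]$ yields the analogous incidences. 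In short, writing $U := \supp a$ and $V := \supp b$, every non-zero $t$-iterate of $U$ meets both $U$ and $V$, and similarly for $V$.

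The last step is to extract a contradiction strictly from $C^1$-regularity. I would pick a component $J = (\alpha, \beta)$ of $U$; $C^1$-regularity gives $a(\alpha) = \alpha$ with $a'(\alpha) = 1$. Tracking $\alpha$ under $t$ and exploiting the forced incidences above, I would locate an accumulation point $p$ of the $t$-orbit of $\overline{J}$ at which components of both $U$ and $V$ accumulate simultaneously. The goal is to assemble from $a$, $b$, and $t^{\pm 1}$ along this orbit a word $w$ which is non-trivial in $\Z^2 \ast \Z$ but whose realization in $\Diff^1_+(X)$ is forced, via the chain rule applied at $p$, to be the identity diffeomorphism, contradicting injectivity of the assumed isomorphism.

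The main obstacle is this final step. Kopell's lemma, the two-jumps lemma, and Denjoy-type rigidity all require $C^{1+\mathrm{bv}}$ or stronger and are thus unavailable here. The contradiction must be squeezed out of the chain rule alone, using crucially that $a$ vanishes outside $U$ and $b$ vanishes outside $V$. Selecting the correct accumulation point $p$ and the correct relator $w$, and in particular handling the $X = S^1$ case where $t$ need not possess any fixed point (so that the relevant fixed-point analysis must be done on an appropriate invariant subinterval or after passing to a suitable power), will be the crux of the argument.
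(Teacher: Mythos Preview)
Your normalization of the generating set is fine (the paper does it in one line via Hopficity of $\bZ^2*\bZ$: the natural surjection $\langle A,B,T\mid [A,B]=1\rangle\to\langle a,b,t\rangle$ between isomorphic finitely generated residually finite groups must be an isomorphism), and the incidence conditions you derive are correct. The genuine gap is the final step, which you explicitly leave as a plan rather than an argument; as written, there is no proof.

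Two concrete points. First, you are mistaken that the Two--jumps Lemma requires $C^{1+\mathrm{bv}}$ regularity: it is a pure $C^1$ statement (Lemma~\ref{l:fg}), and it is exactly the $C^1$ tool the paper exploits. Dismissing it leaves you without the instrument you need. Second, the paper's contradiction is not a single relator forced to vanish by a chain-rule computation at an accumulation point. Instead, the paper passes to the four elements $a,b,c:=tat^{-1},d:=tbt^{-1}$, which under the assumed isomorphism generate $\bZ^2*\bZ^2$, and writes down the specific element $\psi=\bigl[[c,bdb^{-1}],a\bigr]$. A support estimate (Lemma~\ref{lem:c1}), whose proof uses the Two--jumps Lemma in an essential way, shows that $\overline{\supp\psi}\subseteq\supp G$. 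Since $\psi\neq 1$ in $\bZ^2*\bZ^2$, a compact-support argument (Lemma~\ref{l:recursive}) then produces a copy of the lamplighter group $\bZ\wr\bZ$ inside $G$; this is impossible because every two-generator subgroup of a right-angled Artin group is free or free abelian. Your sketch identifies neither the compact-support mechanism nor the lamplighter obstruction, and it is not clear that the bare chain-rule approach you propose can be completed without them.
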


We have stated Theorem~\ref{thm:main} as above for clarity and concision, though several stronger statements can be deduced from the proof we give. In the particular case of finitely generated groups, we note the following, which should be compared to Corollary~\ref{cor:cnt}:

\begin{cor}\label{cor:fg1}
If $G$ is a finitely generated group which is not virtually abelian, then $(G\times\Z)*\Z$ admits no faithful $\Cb$ action on $M$. 
\end{cor}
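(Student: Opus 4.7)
The plan is to split the argument according to whether $G$ is virtually metabelian. If $G$ is not virtually metabelian, Theorem~\ref{thm:main} applies verbatim, and there is nothing more to show. The genuinely new content of Corollary~\ref{cor:fg1} is therefore the residual case in which $G$ is finitely generated, virtually metabelian, but not virtually abelian; the finite generation hypothesis is needed only in handling this case.

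For the residual case, I would first pass to a metabelian finite-index subgroup $N\le G$, which is still finitely generated and still not virtually abelian. Since $(N\times\Z)\ast\Z$ embeds in $(G\times\Z)\ast\Z$, it suffices to rule out faithful $\Cb$ actions of $(N\times\Z)\ast\Z$ on $M$. I would then split further depending on whether $N$ is virtually nilpotent. If $N$ is virtually nilpotent but not virtually abelian, a Plante--Thurston-type obstruction, extended to the $\Cb$ setting via the Kopell lemma, shows that $N$ itself admits no faithful $\Cb$ action on $M$, and hence neither does $(N\times\Z)\ast\Z$, since it contains $N$ as a subgroup.

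If instead $N$ is not virtually nilpotent, then the conjugation action of $Q:=N/[N,N]$ on $A:=[N,N]$ is sufficiently non-trivial that one can find $g\in N$ of infinite order and $a\in A$ such that the iterated conjugates $\{g^{k}ag^{-k}\}_{k\in\Z}$ span a free abelian subgroup of $A$ of unbounded rank. The goal is then to exhibit, under any purported faithful $\Cb$ action of $(N\times\Z)\ast\Z$ on $M$, a triple $(a',b',t)$ satisfying the hypotheses of Theorem~\ref{thm:tech-main-intro}: take $a'$ to be the image of $a$ under the inclusion $N\hookrightarrow N\times\Z$, take $b'$ to be a generator of the $\Z$-factor of $N\times\Z$ (so that $[a',b']=1$ automatically), and take $t$ to be the generator of the free $\Z$-factor of the free product. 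The main obstacle, and the reason this subcase is genuinely harder than Theorem~\ref{thm:main}, is forcing $\supp a'$ and $\supp b'$ to be disjoint without the full ``not virtually metabelian'' hypothesis. My expectation is to exploit the unbounded-rank abelian subgroup $\form{g^{k}ag^{-k}}\le A$ together with a pigeonhole argument on a $\Cb$-invariant such as the total variation of the logarithmic derivative, localizing some conjugate $g^{k_{0}}ag^{-k_{0}}$ on a small subinterval of $M$ disjoint from $\supp b'$, and then replacing $a$ by that conjugate to produce the required disjoint-support pair before invoking Theorem~\ref{thm:tech-main-intro}.
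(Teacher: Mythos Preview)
Your case analysis is far more elaborate than what the paper actually does, and the second subcase contains a genuine gap.

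The paper does not split on virtually metabelian versus not, nor on virtually nilpotent versus not. It simply observes that the proof of Theorem~\ref{thm:main} goes through verbatim once one replaces the appeal to cases~(i) and~(ii) of Lemma~\ref{lem:Z2} by cases~(i) and~(iii). Case~(iii) is precisely tailored to the finitely generated situation: when $M=S^1$ and $H:=G\times\Z$ is finitely generated with $\Z\le Z_H$, Lemma~\ref{l:circle} gives a homomorphism $\rot\restriction_H\colon H\to\bQ/\bZ$; since $H$ is finitely generated and $\bQ/\bZ$ is torsion, the image is \emph{finite}. Hence $H_0:=\ker(\rot\restriction_H)$ has finite index and consists entirely of grounded elements. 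If $G$ is not virtually abelian then $H$ is not abelian-by-finite cyclic, so $H_0$ is nonabelian, and one applies Lemma~\ref{l:circle}(\ref{p:zz}) to a power of the central generator $s$ and any $b\in[H_0,H_0]\setminus\{1\}$ to obtain $\supp s^n\cap\supp b=\varnothing$. That is the entire extra content over Theorem~\ref{thm:main}. The interval case $M=I$ already follows from case~(i) with no finiteness hypothesis at all.

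Your proposed second subcase does not work as stated. You take $b'$ to be the generator of the central $\Z$-factor and hope to find a conjugate $g^{k_0}ag^{-k_0}$ whose support misses $\supp b'$; but there is no mechanism by which a pigeonhole bound on $\var(\log f')$ produces \emph{disjointness} of supports, and $\supp b'$ may well be the entire interior of $M$. The disjointness in the paper never comes from making one support small: it comes from Kopell's Lemma via the Abelian Criterion (Lemmas~\ref{l:disj-abel} and~\ref{l:interval}), which forces $\supp s\cap\supp[Z(s),Z(s)]=\varnothing$ for any $s$. That is a structural consequence of the $\Cb$ regularity and has nothing to do with the algebraic dichotomy nilpotent/non-nilpotent or with any pigeonhole argument. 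Your Plante--Thurston subcase is plausible but unnecessary; the pigeonhole subcase is the real gap.
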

The motivation for proving Theorem~\ref{thm:main} came from investigating right-angled Artin subgroups of $\Diffb(M)$ (cf. Corollary~\ref{cor:classification} below). The simplest right-angled Artin group to which Theorem~\ref{thm:main} applies is $(F_2\times\Z)*\Z$:

\begin{cor}\label{cor:FtimesZ}
The group $(F_2\times\Z)*\Z$ is not a subgroup of $\Diffb(M)$.
\end{cor}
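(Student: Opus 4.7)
The plan is to obtain Corollary~\ref{cor:FtimesZ} as a direct specialization of Theorem~\ref{thm:main} with $G=F_2$, the free group of rank two. Granting the theorem, the only point to verify is that $F_2$ is not virtually metabelian; once that is established, Theorem~\ref{thm:main} immediately yields that $(F_2\times\Z)\ast\Z$ admits no faithful $\Cb$ action on $M$, which is precisely the statement that it is not a subgroup of $\Diffb(M)$.

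To check that $F_2$ is not virtually metabelian, I would proceed as follows. Let $H\le F_2$ be any subgroup of finite index $n$. By the Nielsen--Schreier formula, $H$ is itself free of rank $n(2-1)+1=n+1\ge 2$, so $H\cong F_k$ for some $k\ge 2$. It is classical that the commutator subgroup $[F_k,F_k]$ of a non-abelian free group is free of infinite rank; in particular it is non-abelian, so $H$ is not metabelian. Since this holds for every finite-index subgroup of $F_2$, the group $F_2$ is not virtually metabelian.

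The main (and essentially only) obstacle is hidden in Theorem~\ref{thm:main}, whose proof goes through the $C^1$-smoothability result Theorem~\ref{thm:tech-main-intro}. No additional technical input is required to pass from the main theorem to this corollary beyond the standard group-theoretic verification above, so the derivation itself is a short paragraph.
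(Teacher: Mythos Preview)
Your proposal is correct and matches the paper's own treatment: the paper does not give a separate proof of this corollary but simply presents it as the simplest instance of Theorem~\ref{thm:main}, applied with $G=F_2$. Your verification that $F_2$ is not virtually metabelian via Nielsen--Schreier is exactly the routine check needed (alternatively, one could invoke Corollary~\ref{cor:fg1}, since $F_2$ is finitely generated and not virtually abelian).
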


Since the hypotheses on Theorem~\ref{thm:main} are relatively weak, there are many finitely generated subgroups of $\Homeo^+(M)$ which can be thus shown to admit no faithful $\Cb$ actions on a compact manifold. Recall that \emph{Thompson's group $F$} is a group of piecewise linear homeomorphisms of the interval, with dyadic break points and with all slopes being powers of two. \emph{Thompson's group $T$} is the analogous group defined for the circle. A famous result of Ghys--Sergiescu~\cite{GS1987} says that the standard actions of $T$ and $F$ are topologically conjugate into a group of $C^{\infty}$ diffeomorphisms of the circle and of the interval, respectively. However, we have the following:

\begin{cor}\label{cor:Thompson}
The groups $F*\Z$ and $T*\Z$ are not subgroups of $\Diffb(M)$.
\end{cor}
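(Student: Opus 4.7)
The plan is to deduce Corollary~\ref{cor:Thompson} from Theorem~\ref{thm:main} by exhibiting, inside both $F*\Z$ and $T*\Z$, a subgroup of the form $(G\times\Z)*\Z$ with $G$ not virtually metabelian.

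First I would locate a copy of $F\times\Z$ inside each of $F$ and $T$. For $F$, the standard action on $[0,1]$ allows one to realize elements supported on any prescribed dyadic subinterval; picking two disjoint such intervals produces two commuting copies of $F$, so in particular $F\times\Z\le F\times F\le F$. For $T$, the pointwise stabilizer of a proper closed subarc $J\subset S^1$ contains a copy of $F$ acting on the complementary arc, and choosing two disjoint subarcs of $S^1$ yields $F\times\Z\le T$ by the same construction.

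Next, I would verify that $F$ is not virtually metabelian. This uses the classical fact (due to Brown--Geoghegan) that the commutator subgroup $[F,F]$ is infinite, simple, and non-abelian. If a finite-index subgroup $H\le F$ were metabelian, then $H\cap[F,F]$ would have finite index in $[F,F]$; replacing it by its normal core in $[F,F]$ would give a non-trivial finite-index normal subgroup, which by simplicity must equal all of $[F,F]$. This forces the non-abelian simple group $[F,F]$ to be metabelian, a contradiction.

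To close the argument, the subgroup of $F*\Z$ generated by a fixed copy of $F\times\Z$ inside the first factor together with the second $\Z$ factor is isomorphic to $(F\times\Z)*\Z$, by the normal form theorem for free products. Theorem~\ref{thm:main} then forbids $F*\Z$ from embedding into $\Diffb(M)$, and the identical reasoning handles $T*\Z$. I do not anticipate any serious obstacle beyond pinpointing the subgroup $F\times\Z$ and checking non-virtual-metabelianness, both of which are routine consequences of well-known structural results on Thompson's groups.
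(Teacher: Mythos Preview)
Your proposal is correct and follows essentially the same approach as the paper: find a copy of $F\times\Z$ inside $F$ (via disjointly supported copies of $F$), verify that $F$ is not virtually metabelian using the simplicity of $[F,F]$, and then invoke Theorem~\ref{thm:main}. The only cosmetic difference is that the paper handles $T*\Z$ in one stroke via the inclusion $F\le T$ (hence $F*\Z\le T*\Z$), rather than locating $F\times\Z$ inside $T$ separately as you do; both routes work.
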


In the vein of Corollary~\ref{cor:Thompson}, we do not know the answer to the following:

\begin{que}\label{que:thompson-critical}
Are the groups $F*\Z$ and $T*\Z$ subgroups of $\Diff_+^1(I)$ and $\Diff_+^1(S^1)$ respectively? If so, what is their optimal regularity, i.e. the supremum of the H\"older continuity exponent $\tau\in [0,1)$ such that these groups embed in $\Diff_+^{1+\tau}$ for the relevant manifolds (cf.~\cite{JNR2018,KK2017crit})?
\end{que}

Returning to the original motivation, Theorem \ref{thm:main} combined with results of Farb--Franks and Jorquera completes the classification of right-angled Artin groups admitting actions of various regularities on compact one--manifolds. Before stating the result, we define some terminology. We write $\gam$ for a finite simplicial graph with vertex set $V(\gam)$ and edge set $E(\gam)$. The \emph{right-angled Artin group} (or \emph{RAAG}, for short) on $\gam$ is defined as \[A(\gam)=\langle V(\gam)\mid [v,w]=1 \textrm{ if and only if }\{v,w\}\in E(\gam)\rangle.\]

A subgraph $\Lambda$ of a graph $\gam$ is called a \emph{full} subgraph if $\Lambda$ is spanned by the vertices of $\Lambda$. That is, two vertices in $\Lambda$ are adjacent if and only if they are adjacent in $\gam$.
A simplicial graph $\gam$ is called \emph{$P_4$--free} if no full subgraph of $\gam$ is isomorphic to a path $P_4$ on four vertices. Such graphs are often called \emph{cographs}~\cite{CLB1981,KK2013}. We write $\mathcal{K}$ for the class of cographs. It is well--known that cographs can be fit into a hierarchy which is defined as follows:
\begin{enumerate}
\item
The class $\mathcal{K}_0$ consists of a single vertex;
\item
If $n\geq 1$ is odd then $\mathcal{K}_n$ is obtained by taking $\mathcal{K}_{n-1}$ together with finite joins of elements in $\mathcal{K}_{n-1}$;
\item
If $n\geq 2$ is even then $\mathcal{K}_n$ is obtained by taking $\mathcal{K}_{n-1}$ together with finite disjoint unions of elements in $\mathcal{K}_{n-1}$.
\end{enumerate}
Here, a \emph{join} of two simplicial graphs $X$ and $Y$ is a simplicial graph consisting of the disjoint union of $X$ and $Y$, together with an edge of the form $\{x,y\}$ for every vertex $x$ of $X$ and every vertex $y$ of $Y$.

We have that $\mathcal{K}_0\subset\mathcal{K}_1\subset\KK_2\subset\cdots$ and \[\mathcal{K}=\bigcup_i\mathcal{K}_i.\] 
Note that join and disjoint union correspond to direct product and free product respectively, so that right-angled Artin groups on cographs are exactly the smallest class of groups containing $\Z$, which is closed under finite direct products, and which is closed under finite free products. The reader will observe that if $\gam\in\mathcal{K}_0$ then $A(\gam)\cong\Z$. Similarly, if $\gam\in\mathcal{K}_1$ then $A(\gam)$ is free abelian, and if $\gam\in\mathcal{K}_2$ then $A(\gam)$ is a free product of free abelian groups.
If $\gam\in\mathcal{K}_3$ then $A(\Gamma)$ can be written as \[A(\Gamma)=\prod_{i=1}^m G_i,\]
  where each $G_i$ is a free product of free abelian groups.
In~\cite{BKK2016}, Baik and the authors proved that if $A(\gam)$ admits an injective homomorphism into $\Diffb(M)$, then $\gam\in\KK$. We will deduce a strengthening of this result, using Theorem~\ref{thm:main}.


\begin{cor}[cf. \cite{BKK2016}]\label{cor:classification}
Let $A(\gam)$ be a right-angled Artin group. 
\begin{enumerate}
\item (see \cite{FF2003,Jorquera})
There exists an injective homomorphism $A(\gam)\to\Diff_+^1(M)$; thus, $A(\gam)$ admits a faithful $C^1$ action of $M$.
\item
If there exists an injective homomorphism $A(\gam)\to\Diffb(M)$ then $\gam\in\mathcal{K}_3$; conversely, if $\gam\in\mathcal{K}_3$ then $A(\gam)\le\Diff_+^{\infty}(M)$.
\end{enumerate}
\end{cor}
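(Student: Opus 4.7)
Part~(1) is immediate from the constructions of Farb--Franks~\cite{FF2003} and Jorquera~\cite{Jorquera} cited in the statement. For part~(2), the converse implication (that $\gam\in\KK_3$ yields $A(\gam)\le\Diff_+^{\infty}(M)$) is already contained in~\cite{BKK2016}: in this case $A(\gam)$ decomposes as a direct product of free products of free abelian groups, and one realizes each direct factor smoothly on its own subinterval of $M$ using the Farb--Franks--Jorquera construction, placing the factors on disjoint supports so that the direct product structure is automatically respected.

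For the forward direction of part~(2), suppose $A(\gam)\hookrightarrow\Diffb(M)$. The main result of~\cite{BKK2016} already forces $\gam\in\KK$, so the task is to promote this conclusion from $\KK$ to $\KK_3$. The plan is to reduce to Corollary~\ref{cor:FtimesZ} via the following combinatorial lemma: \emph{any cograph $\gam$ with $\gam\notin\KK_3$ contains $P_3\sqcup K_1$ as a full subgraph}. Once this lemma is in hand, the inclusion of full subgraphs yields
\[
(F_2\times\Z)*\Z\;\cong\;A(P_3\sqcup K_1)\;\hookrightarrow\;A(\gam)\;\hookrightarrow\;\Diffb(M),
\]
contradicting Corollary~\ref{cor:FtimesZ} and so completing part~(2).

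To prove the lemma I would split on the top level of the cograph cotree of $\gam$. If $\gam$ is disconnected, write $\gam=\gam_1\sqcup\cdots\sqcup\gam_k$ with $k\ge 2$; a disconnected member of $\KK_3$ necessarily lies in $\KK_2$, so every $\gam_i$ must be complete. A violation supplies some $\gam_i$ with non-adjacent vertices $a,b$; because $\gam_i$ is connected and $P_4$-free, the distance from $a$ to $b$ in $\gam_i$ is exactly $2$, producing a common neighbor $v\in\gam_i$. Any vertex $w$ drawn from a different component $\gam_j$ is non-adjacent to each of $a,v,b$ in $\gam$, so $\{a,v,b,w\}$ induces $P_3\sqcup K_1$. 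If instead $\gam$ is connected, decompose it as a maximal join $\gam=\gam_1*\cdots*\gam_k$ in which each $\gam_i$ is either a single vertex or disconnected; membership in $\KK_3$ now requires every $\gam_i\in\KK_2$, and a failure hands back a disconnected $\gam_i$ whose non-complete component lets us repeat the previous argument inside $\gam_i$.

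The one delicate point I would verify carefully is that in the connected case the four witnessing vertices $a,v,b,w$ can all be chosen inside a single join-factor $\gam_i$, so that no join-edges coming from the decomposition of $\gam$ enter the induced subgraph on $\{a,v,b,w\}$. With this checked, the induced subgraph in $\gam$ agrees with the induced subgraph in $\gam_i$, and the latter is $P_3\sqcup K_1$ by the disconnected-case construction. This bookkeeping is the only real obstacle; the heavy analytic lifting has already been carried out by Theorem~\ref{thm:main} and its packaging as Corollary~\ref{cor:FtimesZ}.
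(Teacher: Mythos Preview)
Your proposal is correct and follows essentially the same route as the paper: show that $\gam\notin\KK_3$ forces $(F_2\times\Z)\ast\Z\cong A(P_3\sqcup K_1)\le A(\gam)$, then invoke Corollary~\ref{cor:FtimesZ}. The one substantive difference is that you lean on~\cite{BKK2016} in two places where the paper supplies self-contained arguments. For the forward direction you first cite~\cite{BKK2016} to reduce to cographs; the paper (Lemma~\ref{lem:dichotomy}) instead handles $\gam\notin\KK$ directly by noting that $P_4$ is then a full subgraph and that $A(P_4)$ already contains $(F_2\times\Z)\ast\Z$ via the explicit subgroup $\langle a,b,c,dad^{-1}\rangle$ (or via~\cite{KK2013}), so~\cite{BKK2016} is subsumed rather than invoked. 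For the converse you attribute the $\KK_3$ embedding to~\cite{BKK2016}; the paper proves it here as Proposition~\ref{prop:cinfty}, and while your sketch of the construction is correct, that citation may be misplaced. Your cograph analysis via the top level of the cotree is exactly the paper's $\KK_{2i}\setminus\KK_{2i-1}$ argument recast in one step, and your ``delicate point'' is handled automatically since a join factor is a full subgraph.
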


In particular, if we define
\[
\AA_0=\{A(\Gamma)\mid \Gamma\in\KK_3\setminus\KK_2\}\]
then each $G\in\AA_0$ embeds into $\Diff_+^\infty(M)$,
but for all $G,H\in\AA_0$ the group $G\ast H$ never embeds into $\Diff_+^{1+\mathrm{bv}}(M)$.
By contrast, we have the following, which is well--known from several contexts.

\begin{prop}[cf. \cite{KM1996,Rivas2012JAlg,BS2015}]\label{prop:free prod}
The class of countable subgroups of $\Homeo^+(M)$ is closed under finite free products.
\end{prop}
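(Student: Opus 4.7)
The proposition is classical; the cleanest route is through the theory of orderable groups combined with dynamical realization. The two key ingredients I would invoke are: (i) the dynamical realization principle that a countable group embeds into $\Homeo^+(I)$ if and only if it is left-orderable, and into $\Homeo^+(S^1)$ if and only if it is circularly orderable (going back to H\"older and Ghys); and (ii) the fact that orderability is preserved under free products, namely Vinogradov's theorem that a free product of left-orderable groups is left-orderable, and Rivas's theorem~\cite{Rivas2012JAlg} that a free product of circularly orderable groups is circularly orderable.

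Given countable $G, H \le \Homeo^+(M)$, the first step is to show that $G$ and $H$ are circularly orderable. If $M$ has a circle component, this is immediate from (i) applied to that component. If $M$ is a finite disjoint union of intervals, one exhibits an embedding $\Homeo^+(M) \hookrightarrow \Homeo^+(S^1)$ by gluing the interval components end-to-end into an oriented circle $S^1$ and identifying $\Homeo^+(M)$ with the setwise stabilizer of the resulting finite set of gluing points. In either case, $G$ and $H$ sit inside $\Homeo^+(S^1)$, hence are circularly orderable.

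Applying Rivas's theorem, $G * H$ inherits a circular order, and by (i) again it embeds into $\Homeo^+(S^1)$. To conclude I would transplant this embedding back into $\Homeo^+(M)$: when $M$ has a circle component, one acts on that component via the $\Homeo^+(S^1)$-realization and extends by the identity over the remaining components. When $M$ has only interval components, one arranges the circular order on $G * H$ so that its dynamical realization on $S^1$ preserves setwise the preimages of the gluing points and permutes them in the manner dictated by the original component-permutation actions of $G$ and $H$ on $M$; the action then descends to an action on $M$ itself.

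The main technical obstacle is Rivas's theorem on circular orderings of free products, which is the substantive step; by contrast the transplantation step is essentially bookkeeping, though it requires some care to ensure that the circular order chosen on $G * H$ is compatible with the combinatorial structure of the original actions on the components of $M$. A direct ping-pong approach along the lines of~\cite{KM1996,BS2015} would give an alternative route to the same conclusion.
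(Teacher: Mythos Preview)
Your approach via orderability is a genuinely different route from the paper's. The paper only treats the case $M=I$ explicitly, and it does so by a direct construction: for each nontrivial $h\in G\ast\bZ$ it writes $h$ in normal form and builds, by hand, an action $\rho_h$ of $G\ast\bZ$ on a subinterval $I_h$ in which $h$ moves a point, then takes the diagonal of all the $\rho_h$ on disjoint subintervals of $I$. The paper in fact remarks, just before that proof, that your route (free products of left-orderable groups are left-orderable, \cite{KM1996,Rivas2012JAlg}) already yields the result; it nevertheless gives the explicit construction because the same construction is reused to prove Proposition~\ref{prop:ab-homeo}, which shows that the $C^1$ hypothesis in Theorem~\ref{t:tech-main} cannot be relaxed to $C^0$. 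So your high-level argument matches the cited references, while the paper's argument is more self-contained and yields a byproduct it needs later.

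For connected $M$ your argument is correct. For disconnected $M$ there is a real gap. Your claim that ``$G$ is circularly orderable if $M$ has a circle component'' fails once $G$ permutes components: for example $(\bZ/3)^2\rtimes\bZ/2$ sits inside $\Homeo^+(S^1\sqcup S^1)$ (rotate each circle by a third; swap the two circles) but, being finite and non-cyclic, is not circularly orderable. Similarly, gluing $n\ge 3$ intervals end-to-end does \emph{not} embed $\Homeo^+(I^{\sqcup n})$ into $\Homeo^+(S^1)$: the setwise stabilizer of the gluing points in $\Homeo^+(S^1)$ realizes only the cyclic subgroup of $S_n$, whereas $S_n$ itself sits in $\Homeo^+(I^{\sqcup n})$; indeed $S_3$ already shows that countable subgroups of $\Homeo^+(I^{\sqcup 3})$ need not be circularly orderable at all. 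Since the paper proves only the case $M=I$, this is not a discrepancy with the paper so much as a warning that your transplantation step, and the reduction to circular orderability itself, would require substantial additional work to cover general compact one--manifolds.
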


In the same spirit of Question~\ref{que:thompson-critical} and the authors' paper~\cite{KK2017crit}, we have the following:

\begin{que}
Let $\gam\notin\mathcal{K}_3$. What is the supremum of $\tau\in [0,1)$ for which $A(\gam)$ embeds in $\Diff_+^{1+\tau}(M)$? Does $\tau$ depend on $\gam$?
\end{que}

Let $S$ be an orientable surface of genus $g$ and with $n$ punctures or boundary components. We say that $S$ is \emph{sporadic} if \[3g-3+n\leq 1.\] We write $\Mod(S)$ for the mapping class group of $S$, i.e. $\Mod(S)=\pi_0(\Homeo^+(S))$. Using the main result of~\cite{Koberda2012}, we immediately recover the following result as a corollary of Theorem~\ref{thm:main}:

\begin{cor}[cf.~\cite{BKK2016}]
Let $M$ be a compact one--manifold, and let $S$ be an orientable finite-type surface. Then there exists a finite index subgroup $G\le\Mod(S)$ such that $G\le\Diffb(M)$ if and only if $S$ is sporadic.
\end{cor}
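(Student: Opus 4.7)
The plan is to verify both directions separately, using Corollary~\ref{cor:FtimesZ} for the nontrivial implication and the standard structure of mapping class groups of small surfaces for the converse. For the sporadic direction, recall that if $3g-3+n\le 1$ then $\Mod(S)$ is virtually free: when $3g-3+n\le 0$ the group is either finite or equal to $\SL(2,\Z)$ (the torus case), and when $3g-3+n=1$ (the surfaces $S_{0,4}$ and $S_{1,1}$) it is commensurable with $\SL(2,\Z)$. Thus $\Mod(S)$ admits a finite index subgroup $G$ which is free, and since free groups embed smoothly, for example as discrete subgroups of $\PSL(2,\R)\le\Diff_+^\omega(S^1)$ with analogous embeddings into $\Diff_+^\infty(I)$, we obtain $G\le\Diff_+^\infty(M)\le\Diffb(M)$.

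For the non-sporadic direction, I will show that every finite index subgroup $G\le\Mod(S)$ contains an isomorphic copy of $(F_2\times\Z)*\Z$; Corollary~\ref{cor:FtimesZ} then rules out any embedding $G\hookrightarrow\Diffb(M)$. The key input is the embedding theorem of~\cite{Koberda2012}: for any finite family of essential simple closed curves $\{c_1,\ldots,c_k\}$ on $S$ with disjointness graph $\Gamma$, the Dehn twists $T_{c_1}^N,\ldots,T_{c_k}^N$ generate a subgroup of $\Mod(S)$ isomorphic to $A(\Gamma)$ for all sufficiently large $N$. Since $G$ has finite index in $\Mod(S)$, we may further arrange $N$ to be a multiple of $[\Mod(S):G]!$ so that each $T_{c_i}^N$ lies in $G$, placing $A(\Gamma)$ inside $G$.

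It therefore suffices to exhibit four curves on $S$ whose disjointness graph is $P_3\sqcup P_1$, since $A(P_3\sqcup P_1)\cong (F_2\times\Z)*\Z$. Explicitly, one needs a curve $c_3$ disjoint from two transverse curves $c_1,c_2$, together with a fourth curve $c_4$ meeting all three of $c_1,c_2,c_3$. This is straightforward once $3g-3+n\ge 2$: choose $c_1,c_2$ filling a proper connected subsurface $\Sigma\subsetneq S$, take $c_3$ to be a boundary component of $\Sigma$, and let $c_4$ be any curve meeting each of the $c_i$. The minimal cases $S_{0,5}$, $S_{1,2}$, and $S_{2,0}$ admit such configurations by direct inspection, and all other non-sporadic surfaces inherit them via subsurface inclusion. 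The only real obstacle is this explicit curve configuration, which is mild; the rest is immediate from~\cite{Koberda2012} and Corollary~\ref{cor:FtimesZ}.
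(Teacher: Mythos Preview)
Your argument is correct and is precisely the approach the paper has in mind: it states only that the result follows from the main theorem together with the embedding result of~\cite{Koberda2012}, and your proof spells this out by realizing $A(P_3\sqcup P_1)\cong(F_2\times\Z)*\Z$ inside any finite index subgroup of $\Mod(S)$ via high Dehn twist powers and then invoking Corollary~\ref{cor:FtimesZ}. The sporadic direction via virtual freeness of $\Mod(S)$ is also standard and correct.
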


Theorem \ref{thm:main} allows us to build a hierarchy on right-angled Artin groups, whose levels correspond to right-angled Artin groups with more or fewer ``dynamically different" actions on the circle. Roughly speaking, two group actions \[\rho_1,\rho_2\colon G\to\Homeo^+(S^1)\] are \emph{semi--conjugate} (or, \emph{monotone-equivalent}) if there exists 
another action \[\rho\co G\to\Homeo^+(S^1)\]
and monotone degree one maps $h_i\colon S^1\to S^1$ such that
\[h_i\circ \rho=\rho_i\circ h_i\] for each $i=1,2$. See~\cite{Ghys1987,CD2003IM,Ghys2001,Mann-hb,BFH2014,KKMj2016} for instance, and the many references therein.
A \emph{projective action} of a group $G$ is a representation
\[
\rho\co G\to \PSL_2(\bR),\]
where $\PSL_2(\R)$ sits inside of $\Homeo^+(S^1)$ as the group of projective analytic diffeomorphisms of $S^1$. 


\begin{cor}\label{cor:conj}
Let $A(\gam)$ be a right-angled Artin group.
\begin{enumerate}
\item
If $\gam\in\mathcal{K}_2$, then $A(\gam)$ admits uncountably many distinct semi--conjugacy classes of faithful orientation preserving projective actions on $S^1$;
\item
If $\gam\in\mathcal{K}_3\setminus\mathcal{K}_2$ then any faithful orientation preserving $\Cb$ action of $A(\gam)$ on $S^1$ has a periodic point and no dense orbits and hence admits at most countably many distinct semi--conjugacy classes of $\Cb$ actions on $S^1$;
\item
If $\gam\notin\mathcal{K}_3$ then $A(\gam)$ admits no faithful $\Cb$ action on $S^1$.
\end{enumerate}
\end{cor}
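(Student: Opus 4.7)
Part (3) is immediate from Corollary~\ref{cor:classification}(2), applied with $M=S^1$. For Part (1), write $A(\gam)\cong G_1\ast\cdots\ast G_k$ with each $G_i$ finitely generated free abelian. Since the parabolic stabilizer of any point of $S^1$ in $\PSL_2(\bR)$ is isomorphic to $(\bR,+)$, every $G_i$ embeds as a subgroup of such a parabolic group. Choose distinct parabolic fixed points $p_i\in S^1$ and embed $G_i$ into the stabilizer of $p_i$ so that the generators act by sufficiently large translations; the Klein combination / ping-pong theorem then produces a faithful projective representation $\rho\colon A(\gam)\to\PSL_2(\bR)$. By varying the continuous parameters of this construction (for instance, the positions $p_i$, or scaling the generators of a single $G_i$), one obtains a continuous family of such representations. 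Choosing a cyclically reduced word $w\in A(\gam)$ alternating between distinct factors, the element $\rho(w)$ is hyperbolic (or elliptic) with translation length (respectively rotation number) varying continuously and non-trivially with the parameters. Since these quantities are invariants of semi-conjugacy, the family contains uncountably many pairwise non-semi-conjugate faithful projective actions.

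For Part (2), suppose $\gam\in\KK_3\setminus\KK_2$ and $\rho\colon A(\gam)\hookrightarrow\Diffb(S^1)$ is faithful. Write $A(\gam)=G_1\times\cdots\times G_m$ with each $G_i\in\KK_2$; the hypothesis $\gam\notin\KK_2$ forces some factor $G_1$ to be non-abelian and some other factor $G_2$ to be non-trivial. The first goal is to produce a periodic point. Assume for contradiction that the action has no finite orbit. For any non-trivial $c\in G_2$, the image $\rho(c)$ commutes with $\rho(G_1)$. If $\rot(\rho(c))$ were irrational, then the $\Cb$ Denjoy theorem would render $\rho(c)$ $\Cb$-conjugate to an irrational rotation, whose centralizer in $\Diffb(S^1)$ is abelian---forcing $\rho(G_1)$ abelian and contradicting faithfulness. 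Hence $\rho(c)$ has rational rotation number for every $c\in G_2$, and some power $\rho(c^n)$ has proper, closed, non-empty fixed set $F_c$, which is $\rho(G_1)$-invariant since $G_1$ commutes with $c^n$.

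By intersecting such fixed sets over a generating set for $G_2$ and iterating the argument with the roles of $G_1$ and $G_2$ interchanged, one extracts a closed proper $\rho(A(\gam))$-invariant subset $F\subsetneq S^1$. Its minimal components, analyzed via the complementary arcs permuted by $A(\gam)$, should either contain a finite $A(\gam)$-orbit (the desired periodic point) or furnish a ping-pong triple $(a,b,t)\in A(\gam)$ whose $\rho$-images realize $\bZ^2\ast\bZ$ with $\supp\rho(a)\cap\supp\rho(b)=\varnothing$, contradicting Theorem~\ref{thm:tech-main-intro}; more generally, the residual dynamics would realize $(G\times\bZ)\ast\bZ$ faithfully by $\Cb$ diffeomorphisms for some $G$ not virtually metabelian, contradicting Theorem~\ref{thm:main}. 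Given a periodic point, passing to the finite-index point stabilizer $A'\le A(\gam)$ reduces the action to one on a union of closed intervals; a Kopell-type argument in the $\Cb$ category together with the direct product structure inherited by $A'$ rules out dense orbits. Semi-conjugacy classes are then parametrized by the cyclic permutation on the periodic orbit together with the semi-conjugacy classes of the induced interval actions, both of which are at most countable.

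The main obstacle lies in Part (2): upgrading the statement ``every element centralizing a non-abelian factor has rational rotation number'' into ``there exists a finite $A(\gam)$-orbit.'' Making this precise requires careful analysis of how the fixed sets of commuting elements interact under the non-commuting generators of the remaining factors, and repeated appeals to Theorems~\ref{thm:main} and~\ref{thm:tech-main-intro} on the residual invariant sets. The ``no dense orbit'' conclusion, while plausible via Kopell-style reasoning, similarly relies on the delicate interplay between the direct product structure of $A(\gam)$ and $\Cb$ regularity.
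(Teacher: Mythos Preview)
Your Part~(3) matches the paper's argument. For Part~(1) the paper simply quotes the Kim--Koberda--Mj theorem (Theorem~\ref{thm:KKMj}), so your direct ping-pong sketch is in the same spirit but has a slip: translation length of a hyperbolic element is \emph{not} a semi-conjugacy invariant (only rotation number is), and in your parabolic ping-pong construction the alternating word $w$ will typically be hyperbolic with rotation number~$0$, so the invariant you propose does not separate the representations. One needs a different invariant (the bounded Euler class, or rotation numbers of carefully chosen elliptic elements), which is exactly what the cited reference handles.

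The substantive gap is in Part~(2), and it is the one you flag yourself. Your plan of intersecting fixed sets $F_c$ over generators of $G_2$ and then ``iterating with roles interchanged'' does not work as stated: the $F_c$ are $G_1$--invariant but there is no reason their intersection should be, nor that the resulting set should be invariant under the non-commuting generators of the other factors. The paper bypasses this entirely via Lemma~\ref{l:circle}(\ref{p:hom}): on the centralizer $Z(a)$ of any infinite-order element, the rotation number is a \emph{group homomorphism}. Since every generator of $A\times B$ lies in some such centralizer, one gets that $\rot$ is a homomorphism on all of $A\times B$, with image a finitely generated subgroup of $\bQ/\bZ$, hence finite. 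The kernel is then a finite-index subgroup $A_1\times B_1$ consisting entirely of grounded elements. The Disjointness Condition and Abelian Criterion (Lemma~\ref{l:disj-abel}) now give $\supp A_1\cap\supp[B_1,B_1]=\varnothing$, so $\overline{\supp A_1}$ is a proper closed set. Since $A_1$ is normal in $A(\gam)$, this set is $A(\gam)$--invariant, immediately yielding both a finite orbit and the absence of dense orbits---no appeal to Theorem~\ref{thm:main} or Theorem~\ref{thm:tech-main-intro} is needed here at all.
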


In the case of analytic actions on a compact connected one--manifold $M$, one has the following result of Akhmedov and Cohen:

\begin{thm}[See~\cite{AC2015TA}]
The right-angled Artin group $A(\gam)$ embeds into $\Diff^\omega(M)$ if and only if $\Gamma\in\KK_2$, i.e. $A(\gam)$ decomposes as a free product of free abelian groups.
\end{thm}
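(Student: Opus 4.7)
The strategy treats the two directions separately, invoking Corollary~\ref{cor:classification}(2) for the converse. Since $\Diff^\omega(M) \subseteq \Diffb(M)$, Corollary~\ref{cor:classification}(2) forces $\gam \in \KK_3$ whenever $A(\gam) \hookrightarrow \Diff^\omega(M)$. If $\gam \in \KK_3 \setminus \KK_2$, then $A(\gam) = \prod_{i=1}^m G_i$ with $m \ge 2$ and at least one non-free-abelian factor, so $A(\gam)$ contains a copy of $F_2 \times \Z$; hence the converse reduces to showing that $F_2 \times \Z \not\hookrightarrow \Diff^\omega(M)$.

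For this, suppose $\form{a,b} \times \form{z} \cong F_2 \times \Z$ acts faithfully on $M$ by analytic diffeomorphisms. If $\Fix(z) \neq \varnothing$, then $\Fix(z)$ is finite by analyticity; since $a,b$ commute with $z$ they permute $\Fix(z)$, and after replacing $\form{a,b}$ by a finite-index subgroup of rank $\ge 2$ we may assume $a,b$ fix $\Fix(z)$ pointwise. Pick a component $J$ of $M \setminus \Fix(z)$: then $z|_{\overline J}$ has no interior fixed points and embeds uniquely into an analytic Szekeres flow, so its centralizer in $\Diff^\omega(\overline J)$ is abelian. Consequently $[a,b]|_J = \Id$, and analyticity together with connectedness of $M$ forces $[a,b] = \Id$ on all of $M$, contradicting the freeness of $\form{a,b}$. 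If instead $\Fix(z) = \varnothing$, then $M = S^1$ and $z$ has irrational rotation number (else some nontrivial power of $z$ has fixed points, reducing to the previous case). The analytic map $z$ is then topologically conjugate to an irrational rotation $R_\rho$ and uniquely ergodic; since $a,b$ preserve the unique $z$-invariant probability measure $\mu$, a conjugacy sending $\mu$ to Lebesgue measure takes $z$ to $R_\rho$ and sends $a,b$ to measure-preserving homeomorphisms commuting with $R_\rho$, which must be rotations. Rotations commute, so $[a,b] = \Id$, again a contradiction.

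For the forward direction, let $\gam \in \KK_2$, so $A(\gam) \cong \Z^{n_1} \ast \cdots \ast \Z^{n_k}$. Realize each factor $\Z^{n_i}$ as a finitely generated dense subgroup of a one-parameter analytic subgroup of $\PSL_2(\R) \subset \Diff^\omega(S^1)$ via $n_i$ rationally independent parameter values, and amalgamate the factors into a free product by a projective ping-pong argument: arrange the one-parameter subgroups so that their fixed point configurations on $S^1$ lie in suitably separated positions, and scale the generators so that nontrivial reduced words across factors admit a ping-pong partition. The case $M = I$ then follows by conjugating the whole construction into a proper analytic subinterval of $S^1$.

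The principal obstacle lies in the forward construction: analyticity forbids compactly supported diffeomorphisms, so the standard free-product technique of disjoint supports is unavailable, and one must exploit the rigidity of projective dynamics in $\PSL_2(\R)$. On the converse side, the crucial ingredient is the rigidity of analytic centralizers, itself a consequence of the uniqueness of analytic flow embeddings of interval diffeomorphisms without interior fixed points.
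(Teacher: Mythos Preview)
First, note that the paper does not give its own proof of this theorem; it is stated as a result of Akhmedov--Cohen \cite{AC2015TA} and simply cited. So there is no in-paper argument to compare against, and I assess your proposal on its own merits.

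Your obstruction direction (if $A(\gam)\hookrightarrow\Diff^\omega(M)$ then $\gam\in\KK_2$) is essentially correct. The heart of it --- that $F_2\times\Z$ cannot embed in $\Diff^\omega(M)$ for connected $M$ --- works: the central element $z$ has finite fixed set by analyticity, the Abelian Criterion (Lemma~\ref{l:disj-abel}) forces $[a,b]$ to vanish on each complementary interval, and analytic continuation then kills $[a,b]$ on all of the connected $M$. The irrational-rotation case is handled as in Lemma~\ref{l:circle}~(\ref{p:rot-irr}). Your appeal to Corollary~\ref{cor:classification}(2) to first reduce to $\gam\in\KK_3$ is logically valid but unnecessary: one can check directly that $\gam\notin\KK_2$ already forces $F_2\times\Z\le A(\gam)$.

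The construction direction, however, has a genuine gap. When $n_i\ge 2$, any copy of $\Z^{n_i}$ inside a one-parameter subgroup of $\PSL_2(\R)$ is \emph{dense} in that one-parameter group, and therefore contains nontrivial elements arbitrarily close to the identity. Such elements have essentially no contracting dynamics: they move every candidate ping-pong domain only slightly and cannot carry $\bigcup_{j\ne i}X_j$ into $X_i$. Scaling the generators does not help, since a scaled dense subgroup of $\R$ is still dense. The ping-pong lemma therefore does not apply to these factors, and your sketch does not produce the free product. The faithful projective actions of $A(\gam)$ for $\gam\in\KK_2$ that the paper invokes (Theorem~\ref{thm:KKMj}, from \cite{KKMj2016}; see also the proof of Proposition~\ref{prop:cinfty}) are obtained by a Baire-category genericity argument rather than by ping-pong.

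Finally, your passage from $S^1$ to $I$ by ``conjugating the whole construction into a proper analytic subinterval'' does not work: a nontrivial analytic diffeomorphism of $S^1$ cannot be the identity on an arc, and a faithful projective action of a nonabelian free product has no global fixed point at which to cut the circle open.
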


\subsection{Notes and references}

This paper
reveals some of the subtlety of the interplay between algebra and regularity in diffeomorphism groups of one--manifolds. 
Our paper arose during the effort to complete the classification of right-angled Artin subgroups of $\Diff_+^{\infty}(S^1)$
in the spirit of~\cite{BKK2016}.
The essential content of this paper is Lemma~\ref{l:main}, which exhibits an explicit element of the kernel of any given $\Cb$ action of $(G\times\Z)*\Z$ action on a compact one--manifold.
Since a group of the form $(G\times\Z)*\Z$ is generally simpler than the right-angled Artin groups considered in~\cite{BKK2016}, it is more difficult to find elements in the kernel of a given action, and therefore we develop more sophisticated tools here. We note that our main result does subsume the main result of~\cite{BKK2016}. Indeed, the main result of~\cite{BKK2016} is that there is no injective homomorphism $A(P_4)\to\Diffb(M)$, where here \[A(P_4)=\langle a,b,c,d\mid [a,b]=[b,c]=[c,d]=1\rangle.\] The group $A(P_4)$ contains a copy of $(F_2\times\Z)*\Z$, which cannot embed in $\Diffb(M)$ by Corollary~\ref{cor:FtimesZ}. An explicit embedding of $(F_2\times\Z)*\Z$ into $A(P_4)$ is given by $\langle a,b,c,dad^{-1}\rangle\le A(P_4)$ (see~\cite{KK2013} for a discussion of this fact).

The program completed by Corollary~\ref{cor:classification} fully answers a question raised in a paper of M. Kapovich (attributed to Kharlamov) as to which right-angled Artin groups admit faithful $C^{\infty}$ actions on the circle~\cite{Kapovich2012}.

Right-angled Artin subgroups of diffeomorphism groups of one--manifolds find an analogue in right-angled Artin subgroups of linear groups. It is well--known that right-angled Artin groups are always linear over $\Z$ and hence admit injective homomorphisms into $\SL_n(\Z)$~\cite{Humphries1994,HW1999,DJ2000}. For $\SL_3(\Z)$, it is still unclear which right-angled Artin groups appear as subgroups. Long--Reid~\cite{LR2011} showed that $F_2\times\Z$ is not a subgroup of $\SL_3(\Z)$, which implies that any right-angled Artin subgroup of $\SL_3(\Z)$ is a free product of free abelian groups of rank at most two. It is currently unknown whether or not $\Z^2*\Z$ is a subgroup of $\SL_3(\Z)$.

A consequence of the technical work behind Theorem~\ref{thm:main} is a certain criterion to prove that a group contains a lamplighter subgroup. See Lemma~\ref{l:recursive} and Proposition~\ref{t:tech-main2} for precise statements.

A crucial step in our proof of the main theorem is a $C^1$--rigidity result, which is Theorem~\ref{t:tech-main}. 
We note that Thurston~\cite{Thurston1974Top}, Calegari~\cite{Calegari2008AGT}, Navas~\cite{Navas2008GAFA} and Bonatti, Monteverde, Navas and Rivas~\cite{BMNR2017MZ} explored various remarkable $C^1$--rigidity results. 
In particular, a $C^1$--rigidity result on Baumslag--Solitar groups in~\cite{BMNR2017MZ} was employed in a very recent paper by Bonatti, Lodha and Triestino~\cite{BLT2017},
to produce certain piecewise affine homeomorphism groups of $\bR$ which do not embed into $\Diff^1_+(I)$.

As for other classes of groups of homeomorphisms which cannot be realized as groups of $\Cb$ diffeomorphisms, Corollary~\ref{cor:Thompson} is a complement to a result of the second author with Lodha~\cite{KL2017}, in which they show that certain ``square roots" of Thompson's group $F$ may fail to act faithfully by $\Cb$ diffeomorphisms on a compact one--manifold, even though they are manifestly groups of homeomorphisms of these manifolds. Thus, the Ghys--Sergiescu Theorem appears to place $F$ and $T$ at the cusp of smoothability in the sense that even relatively minor algebraic variations on $F$ and on $T$ fail to be smoothable.

Finally, we remark on the optimality of the differentiability hypothesis. 
Corollary~\ref{cor:classification} shows that every right-angled Artin group can act faithfully by $C^1$ diffeomorphisms, but nearly none of them can act by $\Cb$ diffeomorphisms. 
As for Corollary~\ref{cor:not closed} and Proposition~\ref{prop:free prod}, we have the following result (based on~\cite{BMNR2017MZ} and on a suggestion by Navas), which is proved in the authors' recent manuscript~\cite{KK2017crit}:

\begin{prop}[\cite{KK2017crit}]\label{prop:c1freeproduct}
Let $M\in\{I,S^1\}$, and let $BS(1,2)$ be the Baumslag--Solitar group with the presentation $\langle s,t\mid sts^{-1}=t^2\rangle$. Then the group $(BS(1,2)\times\Z)*\Z$ is not a subgroup of $\Diff_+^1(M)$. In particular, the class of finitely generated subgroups of $\Diff_+^1(M)$ is not closed under taking finite free products.
\end{prop}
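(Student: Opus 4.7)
The plan is to assume for contradiction that $\rho\co (BS(1,2)\times\Z)\ast\Z\hookrightarrow \Diff_+^1(M)$ is a faithful action, and to reduce this to a contradiction of Theorem~\ref{t:tech-main}. Write $BS(1,2)=\form{s,t\mid sts^{-1}=t^2}$, let $u$ generate the central $\Z$-summand of $BS(1,2)\times\Z$, and let $v$ generate the free $\Z$-factor. The target is to produce commuting elements $a,b\in BS(1,2)\times\Z$ with $\form{a,b}\cong\Z^2$ and $\supp\rho(a)\cap\supp\rho(b)=\varnothing$; then, since $v$ lies in a free factor, $\form{\rho(a),\rho(b),\rho(v)}\cong\Z^2\ast\Z$, directly contradicting Theorem~\ref{t:tech-main}. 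Theorem~\ref{thm:main} does not apply because $BS(1,2)$ is metabelian, and the $\Cb$ Kopell-type arguments behind Theorem~\ref{thm:main} fail in the $C^1$ category.

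The crucial extra input is the $C^1$-rigidity theorem of Bonatti--Monteverde--Navas--Rivas~\cite{BMNR2017MZ}, which plays the role of Kopell for $BS(1,2)$ in regularity $C^1$: in any faithful $C^1$ action of $BS(1,2)$ on $I$, each connected component $J$ of $\supp\rho(t)$ has an endpoint $p$ that is topologically contracting for some positive power $\rho(s^k)$, and $\rho(t)$ has no fixed point in the interior of $J$. I would first reduce to $M=I$ by using rotation number arguments on the amenable subgroup $BS(1,2)\times\Z$ to find a common fixed point on $S^1$ and then restricting to a complementary arc, checking faithfulness of the restriction via normal-form arguments in the free product.

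With $M=I$, apply the BMNR rigidity to pick the component $J$ as above. Since $u$ is central in $BS(1,2)\times\Z$, $\rho(u)$ permutes the components of $\supp\rho(t)$, so some power $u^m$ preserves $J$. Then $\rho(u^m)\vert_J$ commutes with the fixed-point-free $\rho(t)\vert_J$, so H\"older's theorem embeds $\form{\rho(t)\vert_J,\rho(u^m)\vert_J}$ in a one-parameter translation group of $J$. The $\rho(s^k)$-contraction then allows me to choose $N$ and $\ell$ large enough so that the support of $a:=s^{-\ell}t^Ns^{\ell}$ is concentrated in a neighborhood of $p$ disjoint from the support of $b:=u^m$. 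These $a$ and $b$ commute in $BS(1,2)\times\Z$ and generate an abstract $\Z^2$, closing the argument via Theorem~\ref{t:tech-main}.

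The main obstacle I expect is the degenerate case where $\rho(u)\vert_J$ is pointwise trivial. In that case one must instead work on a component of $\supp\rho(u)$ disjoint from the $\rho(s)$-orbit of $J$, swapping the roles played by $t$ and $u$ and using a similar BMNR-driven construction. Handling this branch requires ruling out the possibility that $\supp\rho(u)$ lies entirely within the closure of the $\rho(s)$-orbit of $J$ and acts trivially there, which would contradict faithfulness of $\rho$ on $BS(1,2)\times\Z$.
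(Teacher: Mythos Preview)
The paper does not actually prove this proposition: it is quoted from the authors' companion manuscript~\cite{KK2017crit}, with only the hint that the argument is ``based on~\cite{BMNR2017MZ}.''  So there is no in-paper proof to compare against; I can only assess your outline on its own merits, noting that your high-level plan (produce a disjointly supported $\Z^2$ inside $BS(1,2)\times\Z$ and invoke Theorem~\ref{t:tech-main}) is consistent with the paper's hint.

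That said, the central step of your construction does not work as written.  You set $a:=s^{-\ell}t^Ns^{\ell}$ and claim its support is ``concentrated in a neighborhood of $p$.''  But $\supp\rho(a)=\rho(s)^{-\ell}(\supp\rho(t^N))=\rho(s)^{-\ell}(\supp\rho(t))$, and since $sts^{-1}=t^2$ the diffeomorphism $\rho(s)$ permutes the components of $\supp\rho(t)$; hence $\supp\rho(a)=\supp\rho(t)$ globally.  In particular $\supp\rho(a)\cap J=J$ for every component $J$ of $\supp\rho(t)$, regardless of $\ell$ and $N$.  Conjugation by powers of $s$ moves points inside $J$ toward $p$, but it does not shrink the support, because $J$ itself is $s^k$-invariant once $s^k$ fixes $\partial J$.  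Meanwhile, in your non-degenerate branch $u^m\restriction_J\neq 1$, and since $\Fix(u^m)\cap J$ is a closed $t\restriction_J$-invariant set, you cannot immediately conclude $\supp\rho(u^m)\supseteq J$ either---your appeal to H\"older is premature, since H\"older requires a \emph{free} action, not merely that one generator is fixed-point-free---but in any case $\supp\rho(a)$ already fills $J$, so disjointness fails.

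Thus both branches collapse: your ``main'' case never produces disjoint supports, and your ``degenerate'' case (which you correctly flag as the real difficulty) is left as a sketch.  The genuine content of the argument must come from a more careful use of the Bonatti--Monteverde--Navas--Rivas structure theorem to locate, for a $C^1$ action of $BS(1,2)\times\Z$, a nontrivial element of $BS(1,2)$ whose support is provably disjoint from $\supp\rho(u)$ (or from $\supp\rho(u^m)$ for suitable $m$).  This requires analyzing how the central element $u$ can interact with the BMNR normal form on each component of $\supp\rho(BS(1,2))$, not just on a single component $J$ of $\supp\rho(t)$.
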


\section{Background on one--dimensional smooth dynamics}
We very briefly summarize the necessary background from one--dimensional dynamics. The reader may also consult~\cite{BKK2016}, parts of which we repeat here, and where we direct the reader for proofs.

\subsection{Poincar\'e's theory of rotation numbers}

Let $f\in\Homeo^+(S^1)$, and let $\tilde f\co \bR\to\bR$ be  an arbitrary lift of $f$.
Then the \emph{rotation number} of $f$ is defined as
\[
\rot f = \lim_{n\to\infty}\frac{\tilde f^n(x)}{n}\in \bR/\bZ=S^1\]
where $x\in \bR$. Then $\rot f$ is well-defined, and independent of the choice of a lift $\tilde f$ and a base point $x\in \bR$; see~\cite{Navas2011} for instance. 
The set of periodic points of $f$ is denoted as $\Per f$.
Let us record some elementary facts.
\begin{lem}\label{l:inv}
For $f\in\Homeo^+(S^1)$, the following hold.
\be
\item
$\rot(f) =0$ if and only if $\Fix f\ne\varnothing$.
\item
$\rot(f)\in\bQ$ if and only if $\Per f\ne\varnothing$.
\item\label{p:inv}
If $x\in S^1$ and $g\in\Homeo^+(S^1)$ satisfy
\[
f^n(x)=g^n(x)\]
for all $n\in\bZ$, then $\rot(f)=\rot(g)$.
\ee
\end{lem}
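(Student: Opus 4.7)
Proof plan. For (1), both directions are direct. If $f(p)=p$, choose the lift $\tilde f$ of $f$ fixing some lift $\tilde p$ of $p$; then $\tilde f^n(\tilde p)=\tilde p$ for all $n$, so $\rot f=0$. Conversely, if $f$ has no fixed point then for any lift $\tilde f$ the continuous $1$-periodic function $\tilde f-\mathrm{id}$ avoids every integer value (an integer value would descend to a fixed point of $f$); by compactness its image lies in some open interval $(k,k+1)$, and after replacing $\tilde f$ by $\tilde f-k$ we get a uniform lower bound $\tilde f(y)-y\ge\epsilon>0$. Iterating yields $\tilde f^n(\tilde x)-\tilde x\ge n\epsilon$, so $\rot f\ne 0$ in $\bR/\bZ$.

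For (2), the decisive identity is $\rot(f^q)\equiv q\cdot \rot f\pmod{\bZ}$, which is immediate from the definition applied to any compatible lift. A periodic point of period $q$ is a fixed point of $f^q$, so (1) gives $\rot(f^q)=0$ and hence $\rot f\in\bQ$. Conversely, writing $\rot f=p/q$ in lowest terms yields $\rot(f^q)=0$; (1) then supplies a fixed point of $f^q$, which is a periodic point of $f$.

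For (3), the main idea is that the rotation number is determined by the dynamics of $f$ on the closure of a single orbit. The relations $f(f^n(x))=f^{n+1}(x)=g^{n+1}(x)=g(f^n(x))$ show that $f=g$ on $O:=\{f^n(x):n\in\bZ\}$, and hence on $\overline O$ by continuity. Collapsing each complementary arc of $\overline O$ to a point produces a monotone degree-one quotient map $\pi\co S^1\to S^1$; because $f$ and $g$ agree on the endpoints of each complementary arc and permute these arcs compatibly, both $f$ and $g$ descend through $\pi$ to one and the same homeomorphism of the quotient circle. Since $\rot$ is an invariant of such monotone semi-conjugacy, we conclude $\rot f=\rot g$.

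I expect (3) to be the main obstacle. The naive approach of choosing lifts with $\tilde f(\tilde x)=\tilde g(\tilde x)$ and trying to maintain $\tilde f^n(\tilde x)=\tilde g^n(\tilde x)$ inductively already fails at $n=2$, because the continuous $1$-periodic defect $\tilde f-\tilde g$ is integer-valued on the preimage of $\overline O$ but need not vanish pointwise along the lifted orbit. Dealing with this forces one to abandon pointwise tracking of lifts and instead extract the rotation number from the global dynamical structure on $\overline O$ via the semi-conjugate quotient.
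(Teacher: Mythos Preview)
The paper does not supply a proof of this lemma; it records the three statements as elementary facts (with a reference to \cite{Navas2011}), so there is no ``paper's approach'' to compare against.  Your arguments for (1) and (2) are standard and correct.

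For (3), however, you have talked yourself out of the right argument.  You assert that choosing lifts with $\tilde f(\tilde x)=\tilde g(\tilde x)$ and inducting ``already fails at $n=2$'' because the defect $\phi:=\tilde f-\tilde g$, though integer--valued on the lifted orbit, ``need not vanish pointwise'' there.  In fact it must vanish.  The function $\phi$ is continuous and $1$--periodic with $\phi(\tilde x)=0$.  If $\phi(z)\ge 1$ for some $z\in(\tilde x,\tilde x+1)$, then monotonicity of both lifts gives
\[
\tilde f(z)=\tilde g(z)+\phi(z)\ge\tilde g(z)+1>\tilde g(\tilde x)+1=\tilde f(\tilde x)+1=\tilde f(\tilde x+1),
\]
contradicting $z<\tilde x+1$; the case $\phi\le -1$ is symmetric.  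Hence $|\phi|<1$ everywhere, so wherever $\phi$ is an integer it is zero.  The induction $\tilde f^{n+1}(\tilde x)-\tilde g^{n+1}(\tilde x)=\phi\bigl(\tilde f^n(\tilde x)\bigr)=0$ then goes through cleanly, and $\rot f=\rot g$ follows directly from the definition.

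By contrast, your proposed route through a monotone quotient collapsing the complementary arcs of $\overline O$ has a genuine gap.  When $\overline O$ is finite (i.e.\ $x$ is periodic) no such degree--one quotient map to $S^1$ exists.  Even when $\overline O$ is countably infinite --- for instance when $\rot f=0$ and $x\notin\Fix f$, so that $\overline O$ consists of the orbit together with two limiting fixed points --- every point of $O$ is isolated in $\overline O$, adjacent complementary arcs share boundary points, and any continuous map constant on each arc is forced to be globally constant.  These cases can be patched separately, but the direct lift argument above is shorter, uniform, and is presumably what the authors have in mind when they call the lemma elementary.
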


The rotation number is a continuous class function (that is, constant on each conjugacy class)
\[
\rot\co \Homeo^+(S^1)\to S^1.\]
Moreover, the rotation number restricts to a group homomorphism on each amenable subgroup of $\Homeo^+(S^1)$; see~\cite{Ghys2001}.

Let us recall the following classical result.

\begin{thm}[H\"older's Theorem~\cite{Holder1901}; see~\cite{Navas2011}]\label{t:hoelder}
A group acting freely on $\bR$ or on $S^1$ by orientation preserving homeomorphisms is abelian.
\end{thm}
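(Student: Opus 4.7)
The plan is to handle the two cases $M = \bR$ and $M = S^1$ separately, in each case reducing to a classical order-theoretic form of H\"older's theorem for abstract groups.

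For a free orientation preserving action of $G$ on $\bR$, the key observation is that every $g \ne 1$ satisfies either $g(x) > x$ for all $x$ or $g(x) < x$ for all $x$, since $x \mapsto g(x) - x$ is continuous and nowhere zero. This dichotomy yields a total order on $G$ by declaring $g \prec h$ iff $g^{-1}h(x) > x$ for all (equivalently, some) $x \in \bR$. A short check shows the order is bi-invariant under multiplication: left-invariance is immediate from orientation preservation, and right-invariance follows because conjugation by an orientation preserving homeomorphism preserves the property of moving every point strictly rightward.

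The crucial step is to verify that this order is Archimedean: given $1 \prec f, g$, some power $f^n$ exceeds $g$. Picking a basepoint $x_0$, the sequence $(f^n(x_0))_{n \ge 0}$ is strictly increasing; if it were bounded above it would converge to a limit $y$ with $f(y) = y$, contradicting freeness. Hence $f^n(x_0) \to \infty$, so eventually $f^n(x_0) > g(x_0)$. Next I would invoke the classical theorem that an Archimedean bi-invariantly ordered group embeds into $(\bR, +)$ and is therefore abelian: fixing a unit $1 \prec f_0$, the assignment $g \mapsto \lim_n m_n/n$, where $f_0^{m_n} \preceq g^n \prec f_0^{m_n + 1}$, yields a well-defined injective order-preserving homomorphism into $\bR$.

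For the $S^1$ case, the plan is to lift to the universal cover. Let $\tilde G$ denote the preimage of $G$ inside the group of orientation preserving homeomorphisms of $\bR$ commuting with integer translation, sitting in a central extension $1 \to \bZ \to \tilde G \to G \to 1$. The induced action of $\tilde G$ on $\bR$ is free: if a lift $\tilde g$ of $g \in G$ fixes a point of $\bR$, then $g$ fixes its projection on $S^1$, forcing $g = 1$ by freeness; then $\tilde g$ is an integer translation with a fixed point, hence the identity. Applying the $\bR$ case shows $\tilde G$ is abelian, so its quotient $G$ is as well. The main hurdle is the Archimedean step together with the limit-of-ratios construction; both are standard but require care in verifying well-definedness and in checking bi-invariance of the order.
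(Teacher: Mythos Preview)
The paper does not supply its own proof of this statement: Theorem~\ref{t:hoelder} is quoted as a classical result, with references to H\"older's original paper and to Navas's book, and is used as a black box in the subsequent arguments. So there is no in-paper proof to compare against.

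That said, your outline is correct and is essentially the standard argument one finds in the cited sources. The reduction of the free $\bR$--action to an Archimedean bi-invariant order, the verification that $f^n(x_0)\to\infty$ by ruling out a fixed limit point, and the invocation of the order-theoretic H\"older theorem (Archimedean bi-ordered groups embed in $(\bR,+)$) are all sound. For $S^1$, lifting to the central extension $\tilde G$ and checking freeness of the lifted action is the right move; your argument that a lift $\tilde g$ with a fixed point must cover the identity and hence be a translation with a fixed point, so $\tilde g=1$, is fine. One small point worth making explicit when you write this up: the bi-invariance check for the order is cleanest if you observe that $g\prec h$ is equivalent to $g(x_0)<h(x_0)$ for \emph{every} $x_0$, from which both left- and right-invariance are immediate.
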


We will use the following variation, which is similar to \cite[Theorem 2.2]{FF2001}.
\begin{cor}[cf.~\cite{FF2001}]\label{c:hoelder-s1}
Let  $X$ be a nonempty closed subset of $S^1$,
and let $G$ be a group acting freely on $X$  by orientation preserving homeomorphisms.
Then  the action of $G$ extends to a free action $\rho\co G\to\Homeo^+(S^1)$ such that 
\[\rot\circ\rho\co G\to S^1\] is an injective group homomorphism.
\end{cor}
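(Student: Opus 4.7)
The plan is to extend the given free action on $X$ to a free orientation-preserving action $\rho$ on all of $S^1$, and then to read off injectivity of $\rot\circ\rho$ from the freeness of $\rho$ together with H\"older's theorem. The complement $S^1\setminus X$ decomposes into a countable disjoint union of open arcs $\{I_\alpha\}$ on which $G$ acts by permutation, and the crucial preliminary observation is that the $G$-stabilizer of every $I_\alpha$ is trivial: any non-identity element stabilizing $I_\alpha$ setwise would fix its two endpoints, both of which lie in $X$, contradicting freeness on $X$.

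To build $\rho$, I would choose one representative arc $I_0$ in each $G$-orbit; the triviality of stabilizers identifies the orbit bijectively with $G$, so I may fix orientation-preserving parametrizations of every arc in the orbit by $[0,1]$ matching endpoints, and define $\rho(g)$ on each arc to be the unique homeomorphism to its $G$-image consistent with these parametrizations. On $X$ the map $\rho(g)$ is declared to equal the given action of $g$. These arcwise definitions assemble into a continuous self-map of $S^1$ because any sequence of complementary arcs $I_n$ collapsing to a point $x\in X$ has $G$-images collapsing to $g(x)$, by continuity of the given action on $X$. A direct check on the parametrizations shows $\rho(g_1g_2)=\rho(g_1)\circ\rho(g_2)$, so this yields a well-defined extension $\rho\co G\to\Homeo^+(S^1)$.

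The extension is free: any fixed point of $\rho(g)$ either lies in $X$, contradicting freeness on $X$, or lies in some $I_\alpha$, forcing $g$ to stabilize $I_\alpha$ and contradicting the stabilizer observation. By H\"older's theorem (Theorem~\ref{t:hoelder}), the group $G$ is then abelian and hence amenable, so $\rot\circ\rho\co G\to S^1$ is a group homomorphism. If $\rot(\rho(g))=0$, Lemma~\ref{l:inv}(1) supplies a fixed point of $\rho(g)$, and freeness forces $g=e$; thus $\rot\circ\rho$ is injective, as desired.

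The main obstacle is the equivariant extension itself: although largely bookkeeping, one must check that the independently chosen arc-by-arc parametrizations glue into a continuous orientation-preserving homeomorphism of $S^1$ extending the given action, which relies on $G$ permuting the complementary arcs compatibly with the cyclic order inherited from $X$. Everything else -- the freeness, the abelianness via H\"older, and the injectivity of $\rot$ on a free action -- is essentially formal once the extension is in place.
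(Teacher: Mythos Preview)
Your approach is correct and essentially mirrors the paper's: extend the action over the complementary arcs, verify the extension is free, then invoke H\"older's theorem and amenability to conclude that $\rot\circ\rho$ is an injective homomorphism. The paper streamlines the extension step by declaring $\rho(g)$ to be the unique \emph{affine} map from each complementary arc $(a,b)$ onto $(ga,gb)$; this choice is canonical and automatically satisfies the group law, so it sidesteps the bookkeeping with orbit representatives and parametrizations that you flag as the main obstacle.
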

\bp
If $(a,b)$ is a component of $S^1\setminus X$ and $g \in  G$, then $(ga,gb)$ is also a component of $S^1\setminus X$. So, $G$ extends to some action $\rho$  on $S^1$ in an affine manner.  
Put \[S^1\setminus X = \coprod_{i\ge1} I_i.\] 
If $\rho (g)y = y$ for some $g \in  G$ and for some $y \in  S^1$, then $y \in  I_i$ for some $i$. We have that $ \rho (g)$ restricts to the identity on $ I_i$ by definition. This would imply $\rho (g)\partial I_i = g \partial I_i = \partial I_i$, and so, $g = 1$. That is, the action $\rho$  of $G$ on $S^1$ is free.

By H\"older's theorem, we see $\rho(G)\cong G$ is abelian. Since abelian groups are amenable, 
we have that $\rot \circ \rho$ is a group homomorphism. The freeness of the action $\rho$ implies that $\rot\circ\rho(g)\ne0$ for all nontrivial $g$.\ep

\subsection{Kopell--Denjoy theory}
Let $M\in\{I,S^1\}$.
We denote by $\var(g;M)$ the total variation of a map $g\co M\to\bR$:
\[\var(g;M) = \sup\left\{\sum_{i=0}^{n-1}\left| g(a_{i+1})- g(a_i)\right|
\co (a_i\co 0\le i\le n)\text{ is a partition of }M\right\}.\]
In the case $M=S^1$, we require $a_n=a_0$ in the above definition.
Following \cite{Navas2011},
we say a $C^1$ diffeomorphism $f$ on $M$ is $\Cbv$ if $\var(f';M)<\infty$.
We let $\Diffb(M)$ denote
the group of orientation preserving $\Cbv$ diffeomorphisms of $M$.
The following two results play a fundamental role on the study of $\Cbv$ diffeomorphisms.

\begin{thm}[Denjoy's Theorem {\cite{Denjoy1958}, \cite{Navas2011}}]\label{t:denjoy}
If $a\in\Diffb(S^1)$ and $\Per a=\emptyset$, then $a$ is topologically conjugate to an irrational rotation.
\end{thm}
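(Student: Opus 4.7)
The plan is to use the rotation number theory already developed, together with the $C^{1+\mathrm{bv}}$ hypothesis. Since $\Per a=\varnothing$, Lemma~\ref{l:inv}(2) gives that $\rho:=\rot(a)\notin\bQ$. Writing $R_\rho\in\Homeo^+(S^1)$ for the rigid rotation by $\rho$, the goal is to produce an orientation preserving homeomorphism $h\co S^1\to S^1$ with $h\circ a=R_\rho\circ h$. The proof splits cleanly into a combinatorial step, a dynamical/analytic step, and an assembly step.

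First I would establish that for any $x_0\in S^1$, the bijection between orbits $a^n(x_0)\mapsto R_\rho^n(0)$ is cyclic-order preserving. This is a standard consequence of the definition of rotation number applied to the lift, and uses only that $a\in\Homeo^+(S^1)$ and $\rho\notin\bQ$. Second, and this is the heart of the argument, I would show that every $a$--orbit is dense in $S^1$. Equivalently, the unique minimal invariant closed set $K\subseteq S^1$ must equal $S^1$; otherwise $K$ is a Cantor set and each component $I$ of $S^1\setminus K$ is a \emph{wandering interval}, meaning $\{a^n(I)\}_{n\in\bZ}$ are pairwise disjoint (since irrationality of $\rho$ prevents any $a^n(I)=a^m(I)$ with $n\ne m$).

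The main obstacle, and the only place where $\Cbv$ regularity is essential, is to rule out wandering intervals. Here I would invoke the Denjoy distortion estimate: if $J\subset S^1$ is such that $J,a(J),\ldots,a^{n-1}(J)$ are pairwise disjoint, then
\[\left|\log\frac{(a^n)'(x)}{(a^n)'(y)}\right|\le V:=\var(\log a';S^1)\quad\text{for all }x,y\in J,\]
obtained by telescoping and using $\sum_{k=0}^{n-1}|a^k(J)|\le 1$ together with $a'$ bounded away from $0$ on the compact $S^1$. Now choose denominators $q_n\to\infty$ of best rational approximants to $\rho$ in the continued fraction expansion. A standard argument in circle rotation combinatorics (using the first step and the irrationality of $\rho$) shows the $2q_n+1$ intervals $a^k(I)$ with $|k|\le q_n$ are pairwise disjoint, so their lengths sum to at most $1$ and in particular $|a^{\pm q_n}(I)|\to 0$. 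On the other hand, applying the distortion bound to $a^{q_n}$ and $a^{-q_n}$ on $I$ and using the mean value theorem yields a uniform lower bound of the form $|a^{q_n}(I)|\cdot|a^{-q_n}(I)|\ge e^{-2V}|I|^2$, which is the desired contradiction.

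Finally, with density of orbits established, pick $x_0\in S^1$ and define $h$ on the dense set $\{a^n(x_0)\}_{n\in\bZ}$ by $h(a^n(x_0))=R_\rho^n(0)$. The image is dense because $\rho$ is irrational, and $h$ is order preserving by the first step; hence $h$ extends uniquely to an orientation preserving homeomorphism of $S^1$. By construction $h\circ a=R_\rho\circ h$ on the orbit of $x_0$, and density gives it on all of $S^1$. I expect the combinatorial and assembly steps to be routine; the real work, and the reason the theorem fails in pure $C^1$ regularity (where Denjoy produced counterexamples), is controlling the distortion of iterates, which requires exactly the bounded variation hypothesis.
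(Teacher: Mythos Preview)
The paper does not give its own proof of Denjoy's Theorem; it is quoted as a classical background result with citations to \cite{Denjoy1958} and \cite{Navas2011}, and is then used as a black box (for instance in the proof of Lemma~\ref{l:circle}(\ref{p:rot-irr})). So there is no in--paper argument to compare against. Your outline is essentially the standard textbook proof one finds in the cited references: irrational rotation number from Lemma~\ref{l:inv}, cyclic--order preservation of orbits, exclusion of wandering intervals via a bounded--distortion estimate, and extension of the orbit bijection to a conjugating homeomorphism.

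One imprecision is worth flagging. You invoke the continued--fraction denominators $q_n$ and then assert that ``the $2q_n+1$ intervals $a^k(I)$ with $|k|\le q_n$ are pairwise disjoint''; but for a \emph{wandering} interval $I$ this is automatic for \emph{all} iterates and has nothing to do with $q_n$. The place where the closest--return combinatorics of $q_n$ is genuinely needed is in the derivation of your product inequality $|a^{q_n}(I)|\cdot|a^{-q_n}(I)|\ge e^{-2V}|I|^2$. The distortion bound for $a^{\pm q_n}$ \emph{on $I$} only controls ratios of $(a^{\pm q_n})'$ at pairs of points in $I$; it does not by itself bound the product $(a^{q_n})'(\xi)\cdot(a^{-q_n})'(\eta)$ from below. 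What one actually uses is that, because $q_n$ is a closest--return time, the short arcs joining $a^k(\xi)$ to $a^{k-q_n}(\xi)$ for $k=0,\dots,q_n-1$ are pairwise disjoint; this gives the comparison $(a^{q_n})'(\xi)/(a^{q_n})'(a^{-q_n}(\xi))\in[e^{-V},e^{V}]$ (equivalently, the Denjoy inequality $e^{-V}\le (a^{q_n})'\le e^{V}$ pointwise), from which your product bound follows. With that correction the sketch is the classical argument.
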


\begin{thm}[Kopell's Lemma {\cite{Kopell1970}, \cite{Navas2011}}]\label{t:kopell}
Suppose $a\in\Diffb[0,1)$, $b\in\Diff_+^1[0,1)$, and $[a,b]=1$.
If $\Fix a\cap (0,1)=\varnothing$ and $b\ne 1$,
then $\Fix b\cap (0,1)=\varnothing$.
\end{thm}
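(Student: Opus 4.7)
The plan is to argue by contradiction. Suppose $b\ne 1$ has a fixed point $q\in(0,1)$. After inverting $a$ if necessary, assume $a(x)<x$ on $(0,1)$, so $q_n:=a^n(q)\searrow 0$; by commutativity each $q_n$ is fixed by $b$, and the open fundamental domains $I_n:=(q_{n+1},q_n)$ are pairwise disjoint with $\sum_{n\ge 0}|I_n|\le q_0$, forcing $|I_n|\to 0$. A preliminary step disposes of the case $b|_{I_0}=\Id$: $a$-conjugation then propagates the identity to every $I_n$, so $b|_{[0,q_0]}=\Id$; the expansion $a^{-1}$, which is also $\Cbv$, carries this identity across $[0,1)$, contradicting $b\ne 1$. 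So I may fix $r\in I_0$ with $b(r)>r$ (replacing $b$ by $b^{-1}$ if needed), and since $b$ preserves $\overline{I_0}$, both $r$ and $b(r)$ lie in $\overline{I_0}$.

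The crucial input is a bounded distortion estimate coming from $V:=\var(\log a';[0,1))<\infty$. Because the iterates $a^i(\overline{I_0})=\overline{I_i}$ are pairwise disjoint,
\[
\left|\log\frac{(a^n)'(x)}{(a^n)'(y)}\right|\le \sum_{i=0}^{n-1}\var(\log a';\overline{I_i})\le V
\]
for all $x,y\in\overline{I_0}$ and $n\ge 1$. Equivalently, $(a^n)'|_{\overline{I_0}}$ is comparable, up to the factor $e^{\pm V}$, to the average value $|I_n|/|I_0|$.

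The contradiction is extracted from two competing estimates for the displacement $\varepsilon_n:=b(a^n(r))-a^n(r)=a^n(b(r))-a^n(r)$. On one hand, writing $\varepsilon_n=\int_r^{b(r)}(a^n)'(t)\,dt$ and using the distortion bound gives $\varepsilon_n\gtrsim(b(r)-r)\,|I_n|/|I_0|$ with multiplicative constant $e^{-2V}$. On the other hand, the accumulating fixed points $q_n\to 0$ force $b'(0)=\lim_n(b(q_n)-b(0))/(q_n-0)=1$, so the $C^1$ hypothesis yields $b(y)-y=o(y)$ as $y\to 0^+$; taking $y=a^n(r)\le q_n$ produces the analytic upper bound $\varepsilon_n=o(q_n)$. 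Combining these gives $|I_n|/q_n\to 0$, which in the hyperbolic case $a'(0)<1$ is immediately absurd since then $|I_n|/q_n\to 1-a'(0)>0$.

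The hard part will be the parabolic case $a'(0)=1$, where $|I_n|/q_n\to 0$ is consistent with $a\in\Cbv$ and a stronger quantitative argument is required. Here the plan is to apply the same $\varepsilon_n$-analysis on each subinterval $[b^k(r),b^{k+1}(r)]$ of the monotone $b$-orbit $r<b(r)<b^2(r)<\cdots\to q'\in(r,q_0]$, a sequence of total length $q'-r<\infty$, and then to telescope the resulting family of distortion inequalities across $k$ and $n$ simultaneously, forcing a decay rate on $(a^n)'|_{\overline{I_0}}$ that is incompatible with the finite total variation $V$.
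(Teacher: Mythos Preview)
The paper does not prove Kopell's Lemma; it is cited as background from \cite{Kopell1970} and \cite{Navas2011}, so there is no in-paper argument to compare against. Evaluating your proposal on its own merits:

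Your setup is correct and standard: the propagation argument disposing of $b|_{I_0}=\Id$, the bounded distortion estimate, the identity $b'(0)=1$, and the lower bound $\varepsilon_n\gtrsim |I_n|$ are all fine. The gap is in how you extract the contradiction. Comparing $\varepsilon_n$ to the \emph{global} scale $q_n$ yields only $|I_n|/q_n\to 0$, and this is a contradiction precisely when $a'(0)<1$; in the parabolic case $a'(0)=1$ it is simply true and carries no information. Your sketched telescoping plan for that case is too vague to assess: iterating over the $b$-orbit of $r$ still only produces inequalities of the same shape $\varepsilon_n^{(k)}\asymp|I_n|$, and it is not clear how summing or telescoping them would force any decay on $(a^n)'|_{\overline{I_0}}$ beyond what bounded distortion already gives.

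The standard route (as in Navas) handles both cases at once by comparing $\varepsilon_n$ to a \emph{local} scale inside $I_n$ rather than to $q_n$. Since $b$ fixes $q_{n+1}$, the Mean Value Theorem applied to $b$ on $[q_{n+1},a^n(r)]$ gives $\zeta_n$ in that interval with
\[
b'(\zeta_n)\;=\;\frac{b(a^n(r))-q_{n+1}}{a^n(r)-q_{n+1}}\;=\;1+\frac{\varepsilon_n}{a^n(r)-q_{n+1}}.
\]
Your own distortion bound shows that both $\varepsilon_n$ and $a^n(r)-q_{n+1}$ are comparable to $|I_n|$, with ratio bounded below by $e^{-2V}\,(b(r)-r)/(r-q_1)>0$. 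Hence $b'(\zeta_n)\ge 1+C$ for a fixed $C>0$ independent of $n$, while $\zeta_n\to 0$ and $b'(0)=1$; this contradicts the continuity of $b'$ at $0$, with no case split on $a'(0)$ required.
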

We remark that the original statement by Kopell was for $C^2$--regularity. Navas extended her result to $C^{1+\mathrm{bv}}$ case~\cite[Theorem 4.1.1]{Navas2011}.

Let $X$ be a topological space. Then we define the \emph{support} of $h\in \Homeo(X)$ as
\[\supp h = X\setminus\Fix h.\] 
It is convenient for us to consider the non--standard \emph{open support} of a homeomorphism as defined here, whereas many other authors use the closure of the open support. We will consistently mean the  open support unless otherwise noted.

For a subgroup $G\le\Homeo(X)$,  we put
\[\supp G = \bigcup_{g\in G}\supp g.\]
We say that $f\in\Homeo(X)$ is \emph{grounded} if $\Fix f\neq\varnothing$. We note that every $f\in\Homeo^+(I)$ is grounded by definition.

The following important observations on commuting $\Cbv$ diffeomorphisms essentially builds on Kopell's Lemma and H\"older's Theorem.

\begin{lem}\label{l:disj-abel}
The following hold:
\be
\item (Disjointness Condition,~\cite{BKK2016})
Let $M\in\{I,S^1\}$,
and let $a,b\in\Diffb(M)$ be commuting grounded diffeomorphisms.
If $A$ and $B$ are components of $\supp a$ and $\supp b$ respectively,
then either $A=B$ or $A\cap B=\varnothing$.
\item (Abelian Criterion, cf.~\cite{FF2001})
If $a,b,c\in\Diffb(I)$ satisfy $\Fix a =\partial I$ and that
$[a,b]=1=[a,c]$, then $[b,c]=1$.
\ee
\end{lem}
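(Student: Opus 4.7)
The plan is to prove the two parts separately, using Kopell's Lemma for both and combining it additionally with H\"older's Theorem for the Abelian Criterion. For the Disjointness Condition, I would argue by contradiction: suppose $A\ne B$ and $A\cap B\ne\varnothing$. Then, after possibly swapping the roles of $a$ and $b$, some endpoint $\alpha$ of $A$ lies strictly inside $B$. Because $\alpha\in\Fix a$ and $a$, $b$ commute, the entire forward orbit $\{b^n(\alpha)\}_{n\ge 0}$ is contained in $\Fix a$. On the other hand, $B$ is $b$-invariant and contains no fixed points of $b$, so this orbit is strictly monotone along $B$ and accumulates at an endpoint $\delta$ of $B$. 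Since $\Fix a$ is closed, $\delta\in\Fix a$, so $\delta$ is a common fixed point of $a$ and $b$. On a one-sided neighborhood of $\delta$ inside $B$, the diffeomorphism $b$ has no interior fixed points, while the $b$-translates $b^n(A)$ are components of $\supp a$ piling up against $\delta$; hence $a$ is nontrivial on this neighborhood yet has interior fixed points $b^n(\alpha)$ accumulating at $\delta$. Applying Kopell's Lemma at $\delta$, with $b$ in the role of the $\Cbv$ diffeomorphism without interior fixed points, contradicts the existence of those fixed points of $a$.

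For the Abelian Criterion, the plan is to show that $\langle b,c\rangle$ acts freely on $(0,1)=\mathrm{int}(I)$ and then to invoke H\"older's Theorem. Since $\Fix a=\partial I$, Kopell's Lemma applied with $a$ in the role of the $\Cbv$ diffeomorphism without interior fixed points forces any $g\in\Diffb(I)$ commuting with $a$ to satisfy either $g=1$ or $\Fix g=\partial I$. Because centralizers are closed under products and inverses, every element of $\langle b,c\rangle$ commutes with $a$ and lies in $\Diffb(I)$, so every nontrivial element acts on $(0,1)\cong\R$ without interior fixed points. H\"older's Theorem then yields that $\langle b,c\rangle$ is abelian, whence $[b,c]=1$.

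The main obstacle lies in setting up Kopell's Lemma in the Disjointness Condition: one must correctly identify the common fixed point $\delta$ (which uses only that $\Fix a$ is closed) and verify that $a$ is genuinely nontrivial on every one-sided neighborhood of $\delta$ inside $B$ (which uses that the components $b^n(A)$ of $\supp a$ accumulate there), so that Kopell's Lemma can be triggered and contradicted. Once these ingredients are assembled, both statements of the lemma follow promptly.
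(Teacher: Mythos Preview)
Your proposal is correct. The paper itself does not supply a proof of this lemma: it defers the Disjointness Condition to~\cite{BKK2016} and, in the remark following the statement, notes that the Abelian Criterion ``is a straightforward consequence of H\"older's Theorem and Kopell's Lemma'' (also mentioning an alternative via Szekeres' Theorem). Your argument for part~(2) is exactly this straightforward consequence, and your argument for part~(1) is the standard one.

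One minor simplification for part~(1): once you observe that $\alpha\in\Fix a\cap B$, you already know $a$ preserves $B$ (since $a$ permutes the components of $\supp b$ and $a(\alpha)=\alpha\in B$). You may then apply Kopell's Lemma directly on the closed interval $\bar B$, with $b\restriction_{\bar B}$ in the role of the fixed-point-free $\Cbv$ diffeomorphism, to conclude $a\restriction_{\bar B}=1$; this contradicts $A\cap B\ne\varnothing$ without needing to pass to the accumulation point $\delta$ or to track the components $b^n(A)$. Your route through $\delta$ is not wrong, just slightly longer than necessary.
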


\begin{rem}
The abelian criterion as given by Farb and Franks is a straightforward consequence of H\"older's Theorem and Kopell's Lemma. We thank one of the referees for pointing out  another simple proof using Szekeres' Theorem~\cite{Navas2011}.
\end{rem}

The notation $f\restriction_A$ for a function $f$ (or set of functions) and a set $A$ means the restriction to $A$.
We will need the following properties of centralizer groups.

\begin{lem}\label{l:circle}
Let $a\in \Diffb(S^1)$ be an infinite order element,
and let $Z(a)$ be the centralizer of $a$ in $\Diffb(S^1)$.
Then the following hold.
\be
\item\label{p:zz}
If $a$ is grounded and if a group $H\le Z(a)$ is generated by grounded elements, then every element in $H$ is grounded and moreover,
\[\supp a\cap \supp [H,H]=\varnothing.\]
\item\label{p:rot-irr}
If $\rot a\not\in\bQ$, then $Z(a)$ is topologically conjugate to a subgroup of $\operatorname{SO}(2,\bR)$.
\item\label{p:rot-q}
If $\rot a\in\bQ$, then $\rot Z(a)\sse\bQ$.
\item\label{p:hom} 
({cf. \cite[Lemma 3.4]{FF2001}})
The rotation number restricts to a homomorphism on $Z(a)$;
in particular, every element of $[Z(a),Z(a)]$ is grounded.
\ee
\end{lem}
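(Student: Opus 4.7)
My plan is to prove (2), (3), (1), (4) in this order. Parts (2) and (3) describe the structure of $Z(a)$ according to $\rot a$, (1) uses only the Disjointness Condition and the Abelian Criterion of Lemma~\ref{l:disj-abel}, and (4) is then deduced from (2) in the irrational case and from (1) together with a further structural analysis in the rational case.

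For (2), I apply Denjoy's Theorem~\ref{t:denjoy} to obtain $h\in\Homeo^+(S^1)$ with $hah^{-1}=R_\alpha$ for $\alpha=\rot a$. Conjugation identifies $Z(a)$ with $Z(R_\alpha)\le\Homeo^+(S^1)$, and since every orbit of $R_\alpha$ is dense and a centralizing homeomorphism preserves cyclic order, each element of $Z(R_\alpha)$ must itself be a rotation; thus $hZ(a)h^{-1}\le\operatorname{SO}(2,\bR)$. For (3), I argue by contradiction: if some $b\in Z(a)$ had $\rot b\notin\bQ$, then $a\in Z(b)$, and applying (2) with the roles of $a$ and $b$ swapped would conjugate $a$ to a rotation with rational rotation number. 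But any such rotation has finite order, contradicting the infinite order of $a$.

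For (1), each grounded generator $g_i$ commutes with the grounded $a$, so by the Disjointness Condition every component $J$ of $\supp a$ is either a component of $\supp g_i$ or disjoint from $\supp g_i$; in either case $g_i(J)=J$. Consequently every $h\in H$ preserves each component $J$ of $\supp a$ setwise, fixes the non-empty boundary $\partial J\sse\Fix a$, and is therefore grounded. Restricting $H$ to $\bar J$ gives a subgroup of $\Diffb(\bar J)$ centralizing $a|_{\bar J}$, whose fixed set in $\bar J$ is exactly $\partial J$; the Abelian Criterion forces any pair of generators to commute on $\bar J$, making $H|_{\bar J}$ abelian. Hence $[H,H]$ fixes each $\bar J$ pointwise, and $\supp[H,H]\cap\supp a=\varnothing$.

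For (4), the irrational case is immediate from (2): $Z(a)$ is abelian and $\rot$ restricts to the standard identification on rotations, which is a homomorphism. In the rational case, let $q$ be the order of $\rot a$ in $S^1$, so $a^q$ is grounded and $Z(a)\le Z(a^q)$; it suffices to treat $Z(a^q)$. I would first apply (1) to the subgroup of $Z(a^q)$ generated by its grounded elements to observe that commutators of grounded centralizers have support disjoint from $\supp a^q$ and hence fix $\Fix a^q\neq\varnothing$. The main obstacle is extending this to arbitrary $b,c\in Z(a^q)$ that need not be grounded. I expect to address this through the short exact sequence arising from the $Z(a^q)$-action on the cyclic set of components of $\supp a^q$: the quotient is cyclic (by orientation preservation), so $\rot$ is automatically a homomorphism on it, while the kernel consists of elements preserving each component setwise, and on each $\bar J$ the kernel centralizes $a^q|_{\bar J}$ and so, by Kopell's Lemma and Szekeres's Theorem, lies in a common one-parameter subgroup---so the kernel is abelian and $\rot$ is a homomorphism on it as well. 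Patching together the two pieces yields that $\rot|_{Z(a^q)}$ is a homomorphism, from which the groundedness of $[Z(a),Z(a)]$ is immediate.
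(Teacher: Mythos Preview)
Your arguments for parts (1), (2), and (3) are correct and essentially coincide with the paper's. The gap is in part (4), rational case.

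You propose that the kernel $K$ of the $Z(a^q)$--action on $\pi_0(\supp a^q)$ is abelian, arguing via Kopell--Szekeres on each closure $\bar J$. But this only constrains $K$ on $\overline{\supp a^q}$; elements of $K$ may act arbitrarily on the interior of $\Fix a^q$, where $a^q$ is the identity and imposes no commutation constraint whatsoever. Concretely, if $U$ is a nondegenerate component of the interior of $\Fix a^q$, any two non-commuting diffeomorphisms $h_1,h_2\in\Diffb(S^1)$ supported in $U$ lie in $K\le Z(a^q)$ yet satisfy $[h_1,h_2]\neq1$. So $K$ is not abelian, and your short exact sequence does not yield an abelian (or even solvable) group to which you could apply the amenable-case homomorphism property of $\rot$. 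A secondary issue: $\supp a^q$ can have infinitely many components, so the quotient of your action need not be cyclic either.

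The paper circumvents this by never asking $K$ (which coincides with $G_0:=\rot^{-1}(0)\cap Z(a^q)$) to be abelian. Instead it shows, using your part (1), that $G_0$ is a subgroup fixing $X:=\partial\Fix a^q$ pointwise, and that $G_0$ is normal since $\rot$ is a class function. Thus $Z(a^q)/G_0$ acts \emph{freely} on the closed set $X$; by H\"older's Theorem (in the form of Corollary~\ref{c:hoelder-s1}) this free action extends to one on $S^1$ for which the rotation number is an injective homomorphism, and a comparison of orbits on $X$ shows this homomorphism agrees with $\rot$ restricted to $Z(a^q)$. The crucial difference is that the paper extracts abelianness from the \emph{quotient} acting freely, rather than from the kernel.
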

\bp
(\ref{p:zz})
Let $J$ be a component of $\supp a$.
By the Disjointness Condition, the group $H$ acts on the open interval $J$. Since $H$ fixes $\partial J$, every element of $H$ is grounded.
The Abelian Criterion implies that
\[[H,H]\restriction_J=1.\]
So, we have that $J\cap \supp[H,H]=\varnothing$.

(\ref{p:rot-irr}) By Denjoy's Theorem, the map $a$ is topologically conjugate to an irrational rotation. The centralizer of an irrational rotation in $\Homeo^+(S^1)$ is $\operatorname{SO}(2,\bR)$;
see~\cite[Proposition 2.10]{FF2001} or~\cite[Exercise 2.2.12]{Navas2011}.

(\ref{p:rot-q})
Suppose some element $b$ in $Z(a)$ has an irrational rotation number. 
Since $a\in Z(b)$, part (\ref{p:rot-irr}) implies that $a$ is conjugate to a rotation. This is a contradiction, for a rotation with a rational rotation number must have a finite order.

(\ref{p:hom})
If $\rot a$ is irrational, then the conclusion follows from part (\ref{p:rot-irr}). So we may assume $\rot a\in\bQ$. Then $a^p$ is grounded for some $p\ne0$.
Since $Z(a)\le Z(a^p)$, it suffices to prove the lemma for $a^p$. In other words, we may further suppose that $a$ is grounded.

Let us put 
\[G = Z(a),\quad G_0 = \rot^{-1}(0)\cap G.\]
Part (\ref{p:zz}) implies that 
\[
G_0=\bigcup_{x\in S^1}\stab_G(x)=\form{G_0}\]
is a group.
Since $\rot$ is a class function on $\Homeo^+(S^1)$,
we see that $G_0\unlhd G$.

For each $x\in \Fix a$ and $g\in G$, we note
\[
ag(x) = ga(x) = g(x).\]
So, $\Fix a$ is $G$--invariant. 
We have a nonempty proper closed $G$--invariant set
\[
X=\partial \Fix a.\]
\begin{claim}\label{claim:HG}
For all $x\in X$, the group $G_0$ fixes $x$.
\end{claim}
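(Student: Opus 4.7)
The plan is to argue by contradiction using the Disjointness Condition of Lemma~\ref{l:disj-abel}(1). Fix $x\in X=\partial\Fix a$ and $g\in G_0$; the goal is to show $g(x)=x$. Under the reduction made just above the claim, $a$ is grounded, and every element of $G_0$ is grounded by the definition $G_0=\rot^{-1}(0)\cap G$ together with Lemma~\ref{l:inv}(1). Moreover $g\in G=Z(a)$ commutes with $a$, so the pair $(a,g)$ satisfies the hypotheses of the Disjointness Condition: any component of $\supp a$ and any component of $\supp g$ are either equal or disjoint.

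Suppose for contradiction that $g(x)\ne x$. Then $x$ lies in some component $J'$ of $\supp g$, which is an open interval. Since $x\in\partial\Fix a$, every neighborhood of $x$ meets $\supp a=S^1\setminus\Fix a$; applying this to the open set $J'$, we obtain a point $y\in J'\cap\supp a$, and we let $J$ be the component of $\supp a$ containing $y$. Then $J\cap J'\ni y$, so $J\cap J'\ne\varnothing$.

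By the Disjointness Condition, the two open intervals $J$ and $J'$ must in fact coincide: $J=J'$. But then $x\in J'=J\sse\supp a$, contradicting $x\in\Fix a$ (which holds because $\Fix a$ is closed and $x\in\partial\Fix a$). This contradiction forces $g(x)=x$, proving the claim.

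I do not expect any real obstacle here: once one unpacks what $x\in\partial\Fix a$ means (that $x$ is fixed by $a$ while every neighborhood of $x$ meets $\supp a$), the Disjointness Condition does essentially all the work. The only point to double-check is that the hypotheses of Lemma~\ref{l:disj-abel}(1) are in force, namely that both $a$ and $g$ are commuting grounded $\Cbv$ diffeomorphisms; this is immediate from the reduction to $a$ grounded and the definition of $G_0$.
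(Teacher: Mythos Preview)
Your proof is correct and follows essentially the same route as the paper: both arguments hinge on the Disjointness Condition (Lemma~\ref{l:disj-abel}(1)) applied to the commuting grounded pair $a,g$. The paper phrases it slightly differently---it invokes part~(\ref{p:zz}) to conclude that $\partial J\subseteq\Fix g$ for every $J\in\pi_0\supp a$, then observes $X=\overline{\bigcup_J\partial J}$---whereas you argue directly by contradiction at a single boundary point; but the content is the same.
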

For each $J\in\pi_0\supp a$ and $g\in G_0$, 
we have seen in part (\ref{p:zz}) that $\partial J\sse \Fix g$.
Since we can write
\[
X = \overline{\bigcup\{\partial J\mid J\in\pi_0\supp a\}}\]
we see that $X\sse \Fix g$. This proves the claim.

Let $p\co G\to G/G_0$ denote the quotient map.
By Claim~\ref{claim:HG}, 
 the natural action
\[G/G_0\to \Homeo^+(X),\quad p(g).x=g(x)\] 
is well-defined and free. 
By Corollary~\ref{c:hoelder-s1}, this free action extends to a free action \[\rho\co G/G_0\to \Homeo^+(S^1)\] such that $\rot\circ\rho$ is an injective homomorphism.

For each $g\in G$, $x\in X$ and $n\in\bZ$, we have
\[\rho\circ p(g^n)(x)=g^n(x).\]
By Lemma~\ref{l:inv} (\ref{p:inv}) we see that
\[\rot\circ\rho\circ p = \rot\restriction_G,\]
and hence, that $\rot\restriction_G$ is a group homomorphism.
As $S^1$ is abelian, we also obtain 
\[
[G,G]\le \ker(\rot\restriction_G)=G_0.\qedhere\]
\ep

Note that a finitely generated subgroup of $\operatorname{SO}(2,\bR)$ consisting of elements with rational rotation numbers is necessarily finite. So in Lemma~\ref{l:circle} (\ref{p:rot-q}), if $G$ is a finitely generated subgroup of $Z(a)$ then $\rot(G)$ is a finite subgroup of $S^1\cong\operatorname{SO}(2,\bR)$.

\begin{rem}
Part (\ref{p:hom}) of Lemma~\ref{l:circle} appears in the unpublished work of Farb and Franks \cite{FF2001}, on which our argument is based. We included here a detailed, self-contained proof for readers' convenience. We also remark the necessity of the infinite--order hypothesis, which was omitted in~\cite{FF2001}. For example, let us consider $a,b,c\in\Diff^\infty_+(S^1)$ such that 
for each $x\in S^1=\bR/\bZ$ 
we have
\[
a(x)=x+1/2,\quad
b(x)=x+1/4,\quad
c(x+1/2)=c(x)+1/2\]
and such that
\[
c(0) = 0,\quad
c(1/8) = 1/4,\quad
c(1/4) = 3/8.\]
Then $b,c\in Z(a)$ and $(bc)^3(0)=0$. Hence we have
\[\rot (b)+\rot(c) = 1/4 + 0 \ne 1/3 =\rot(bc).\]
\end{rem}


\begin{lem}\label{l:interval}
Let $a\in \Diffb(I)$,
and let $Z(a)$
be the centralizer of $a$ in $\Diffb(I)$.
Then we have
\[\supp a\cap \supp [Z(a),Z(a)]=\varnothing.\]
\end{lem}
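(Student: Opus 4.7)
The plan is to imitate the proof of Lemma~\ref{l:circle}(\ref{p:zz}), noting that the interval case is in fact simpler since every element of $\Diffb(I)$ automatically fixes $\partial I$, and is therefore grounded. So I will not need any of the rotation-number machinery used in Lemma~\ref{l:circle}. It suffices to show, for each component $J$ of $\supp a$ and each pair $b,c\in Z(a)$, that $[b,c]$ restricts to the identity on $J$.

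First I would fix a component $J$ of $\supp a$ and an arbitrary $b\in Z(a)$. Since $a$ and $b$ are commuting grounded elements of $\Diffb(I)$, the Disjointness Condition (Lemma~\ref{l:disj-abel}(1)) applies: if $B$ is any component of $\supp b$ meeting $J$, then $B=J$. Hence exactly one of the following holds: either $J\cap\supp b=\varnothing$, in which case $b\restriction_J=\Id$, or $J$ is itself a component of $\supp b$, in which case $b$ preserves $\bar J$ and fixes $\partial J$ pointwise, so $b\restriction_{\bar J}$ is a well-defined element of $\Diffb(\bar J)$.

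Next, given $b,c\in Z(a)$ I would run this dichotomy for each of them. If either $b\restriction_J$ or $c\restriction_J$ is the identity, then $[b,c]\restriction_J=\Id$ trivially. Otherwise both $b\restriction_{\bar J}$ and $c\restriction_{\bar J}$ lie in $\Diffb(\bar J)$ and commute with $a\restriction_{\bar J}$. Since $J$ is a component of $\supp a$, the restriction $a\restriction_{\bar J}$ satisfies $\Fix(a\restriction_{\bar J})=\partial J$. Identifying $\bar J$ with $I$, the Abelian Criterion (Lemma~\ref{l:disj-abel}(2)) then yields $[b,c]\restriction_{\bar J}=\Id$, and in particular $[b,c]\restriction_J=\Id$.

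Since this conclusion holds for every pair of generators of $[Z(a),Z(a)]$, every commutator in $[Z(a),Z(a)]$ is the identity on $J$, which gives $J\cap\supp[Z(a),Z(a)]=\varnothing$. Taking the union over all components $J$ of $\supp a$ completes the proof. I do not expect any serious obstacle here: the argument is a direct bookkeeping application of the Disjointness Condition and the Abelian Criterion, and the main point is simply to verify that both cases of the Disjointness dichotomy are compatible with the Abelian Criterion's hypothesis $\Fix a=\partial I$ when one restricts to $\bar J$.
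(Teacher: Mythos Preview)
Your proof is correct and follows essentially the same route as the paper, which simply remarks that the argument is ``almost identical to that of Lemma~\ref{l:circle}(\ref{p:zz}).'' The only difference is cosmetic: you split into the cases $b\restriction_J=\Id$ versus $J\in\pi_0\supp b$, whereas the paper observes directly that every $b\in Z(a)$ preserves $J$ and then applies the Abelian Criterion uniformly; your case split is harmless but unnecessary, since the identity also commutes with $a\restriction_{\bar J}$.
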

The proof is almost identical to that of Lemma~\ref{l:circle} (\ref{p:zz}).

\subsection{The Two--jumps Lemma}
The Two--jumps Lemma was developed by Baik and the authors in~\cite{BKK2016} and is the second essential analytic result needed to establish Theorem~\ref{thm:main}.

\begin{lem}[Two--jumps Lemma,~\cite{BKK2016}]\label{l:fg}
Let $M\in\{I,S^1\}$ and let $f,g\co M\to M$ be continuous maps.
Suppose  $(s_i), (t_i)$ and $(y_i)$ are infinite sequences of points in $M$
such that for each $i\ge1$, one of the following two conditions hold:
\be[(i)]
\item
$f(y_i)\le s_i = g(s_i) < y_i < t_i = f(t_i) \le g(y_i)$;
\item
$g(y_i)\le t_i = f(t_i) < y_i < s_i = g(s_i) \le f(y_i)$.
\ee
If $|g(y_i)-f(y_i)|$ converges to $0$ as $i$ goes to infinity,
then $f$ or $g$ fails to be $C^1$.
\end{lem}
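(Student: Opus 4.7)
The plan is to argue by contradiction: assume both $f$ and $g$ are $C^1$, and extract a common limit point $y^*$ at which the $C^1$ hypothesis clashes with the ``two jumps.'' Passing to subsequences I would first reduce to the case that every index $i$ satisfies condition (i), since (ii) follows by swapping the roles of $s_i \leftrightarrow t_i$ and $f \leftrightarrow g$ (equivalently, by reversing orientation). By compactness of $M$, extract a subsequence with $y_i \to y^*$. Since in case (i) we have $g(y_i) - f(y_i) \ge t_i - s_i \ge 0$, the hypothesis $|g(y_i) - f(y_i)| \to 0$ forces $t_i - s_i \to 0$, so the sandwich $s_i < y_i < t_i$ gives $s_i, t_i \to y^*$. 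Continuity of $f$ applied to $f(t_i) = t_i$ yields $f(y^*) = y^*$, and similarly $g(y^*) = y^*$.

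\emph{MVT and derivative blow-up.} Applying the mean value theorem to $f$ on $[y_i, t_i]$ and to $g$ on $[s_i, y_i]$ produces $\xi_i \in (y_i, t_i)$ and $\eta_i \in (s_i, y_i)$ with
\[
f'(\xi_i) \ge \frac{t_i - s_i}{t_i - y_i} \ge 1, \qquad g'(\eta_i) \ge \frac{t_i - s_i}{y_i - s_i} \ge 1,
\]
and $\xi_i, \eta_i \to y^*$. The strict ordering $s_i < y_i < t_i$ prevents $s_i$ and $t_i$ from simultaneously equaling $y^*$, so, pigeonholing on a further subsequence, I can assume $t_i \ne y^*$ for all $i$ (the alternative, $s_i \ne y^*$, is handled symmetrically by swapping $f,g$). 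The fixed-point identity then yields $\frac{f(t_i) - f(y^*)}{t_i - y^*} = 1$, so the $C^1$ hypothesis forces $f'(y^*) = 1$. Continuity of $f'$ then gives $f'(\xi_i) \to 1$, and combined with $f'(\xi_i) \ge (t_i - s_i)/(t_i - y_i) \ge 1$ this yields $(y_i - s_i)/(t_i - y_i) \to 0$. Substituting into the bound on $g'$,
\[
g'(\eta_i) \ge 1 + \frac{t_i - y_i}{y_i - s_i} \longrightarrow \infty,
\]
while $\eta_i \to y^*$, contradicting continuity of $g'$ at $y^*$.

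\emph{Main obstacle.} The delicate point is justifying that some difference-quotient argument genuinely produces a reference derivative equal to $1$ at $y^*$: a priori the approximating fixed points $s_i$ or $t_i$ could have already collapsed to $y^*$, preventing the naive identification. The strict ordering $s_i < y_i < t_i$ provided by the hypothesis is exactly what excludes the simultaneous collapse of both endpoints, and the rest of the argument is a clean pairing of the MVT with continuity of the derivative.
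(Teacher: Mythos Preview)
The paper does not contain a proof of this lemma: it is quoted from~\cite{BKK2016} and only the statement, a figure, and a brief remark relating it to a lemma of Bonatti--Crovisier--Wilkinson are given. So there is nothing in this paper to compare your argument against directly.

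That said, your argument is correct. The reduction to case (i), the extraction of a limit point $y^*$ fixed by both $f$ and $g$, and the mean-value-theorem bounds
\[
f'(\xi_i)\ge\frac{t_i-s_i}{t_i-y_i}=1+\frac{y_i-s_i}{t_i-y_i},\qquad
g'(\eta_i)\ge\frac{t_i-s_i}{y_i-s_i}=1+\frac{t_i-y_i}{y_i-s_i},
\]
are all valid. Your handling of the ``obstacle'' is also sound: since $s_i<t_i$ for each $i$, at least one of $s_i,t_i$ differs from $y^*$, and pigeonholing on a subsequence lets you pin down $f'(y^*)=1$ (or symmetrically $g'(y^*)=1$) via the difference quotient at the fixed points. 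The squeeze $1\le 1+\frac{y_i-s_i}{t_i-y_i}\le f'(\xi_i)\to 1$ then forces $\frac{t_i-y_i}{y_i-s_i}\to\infty$, blowing up $g'(\eta_i)$ and contradicting continuity of $g'$.

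One small point worth making explicit for the $S^1$ case: once $t_i-s_i\to 0$ and all points accumulate at $y^*$, everything eventually lives in a single coordinate chart, so the linear ordering and the mean value theorem are legitimate. You use this implicitly and it causes no trouble, but a sentence to that effect would make the write-up airtight.
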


Figure~\ref{f:fg} illustrates the case (i) of Lemma~\ref{l:fg}.
The reader may note that the homeomorphisms $f$ and $g$ above are \emph{crossed elements}~\cite[Definition 2.2.43]{Navas2011}.
Indeed,  the Two--jumps Lemma generalizes a unpublished lemma of Bonatti--Crovisier--Wilkinson regarding crossed $C^1$--diffeomorphisms,
which can be found in \cite[Proposition 4.2.25]{Navas2011}.

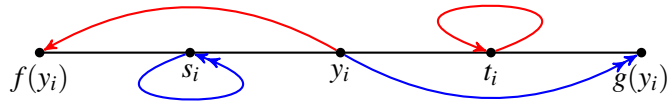
\begin{figure}[h!]
  \tikzstyle {bv}=[black,draw,shape=circle,fill=black,inner sep=1pt]
\begin{tikzpicture}[>=stealth',auto,node distance=3cm, thick]
\draw  (-4,0) node (1) [bv] {} node [below]  {\small $f(y_i)$} 
-- (-2,0)  node  (2) [bv] {} node [below] {\small $s_i$}
-- (0,0) node (3) [bv] {} node [below] {\small $y_i$} 
-- (2,0)  node (4) [bv] {} node [below] {\small $t_i$}
-- (4,0) node (5) [bv] {} node [below]  {\small $g(y_i)$};
\path (3) edge [->,bend right,red] node  {} (1);
\path (3) edge [->>,bend right,blue] node  {} (5);
\draw [->>, blue] (2)  edge [out = 200,in=-20,looseness=50] (2);
\draw [->, red] (4)  edge [out = 20,in=160,looseness=50] (4);
\end{tikzpicture}%
\caption{Two--jumps Lemma.}
\label{f:fg}
\end{figure}


\section{The $C^1$--Smoothability of $\bZ^2\ast\bZ$}
This section establishes the main technical result of the paper as below.

\begin{thm}\label{t:tech-main}
Let $M\in\{I,S^1\}$,
and let
 $a,b,t\in\Diff^1_+(M)$. If 
\[\supp a\cap\supp b=\varnothing,\] 
then the group $\form{a,b,t}$  is not isomorphic to $\bZ^2\ast\bZ$.
\end{thm}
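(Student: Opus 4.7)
The plan is to assume for contradiction that $\langle a,b,t\rangle\cong\bZ^2\ast\bZ$ and derive a failure of $C^1$ regularity via the Two--jumps Lemma (Lemma~\ref{l:fg}). Since $\supp a\cap\supp b=\varnothing$ implies $[a,b]=1$, the subgroup $\langle a,b\rangle$ is abelian, and under the hypothesized isomorphism it must sit inside the $\bZ^2$ factor; infinite order of each element then forces $\langle a,b\rangle\cong \bZ^2$ and $\langle t\rangle\cong\bZ$. In particular, for any nontrivial $w,w'\in\langle a,b\rangle$ and any $n\neq 0$ the commutator $[w,\,t^nw't^{-n}]$ is nontrivial in $\bZ^2\ast\bZ$, hence must act nontrivially on $M$.

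The core step is forcing a \emph{crossed} configuration. Nontriviality of $[a,\,t^nbt^{-n}]$ shows $\supp a\cap t^n\!\supp b\neq\varnothing$ for some $n\neq 0$; so there exist a component $A\subseteq\supp a$ and a component $B'\subseteq t^n\!\supp b$ with $A\cap B'\neq\varnothing$. Since $a$ and $t^nbt^{-n}$ generate $F_2$ in $\bZ^2\ast\bZ$ and so cannot have conjugate or equal restrictions, I would argue that the containment cases $A\subseteq B'$ or $B'\subseteq A$ cannot persist for \emph{all} choices of $n$ and all pairs in $\{a,b\}\times\{a,b\}$: in each containment case one passes to a larger conjugate or replaces $a$ by $b$ (using the symmetric disjointness hypothesis) to produce a new overlap, and the free product structure guarantees that some iterate genuinely crosses, i.e., produces components with interlaced endpoints $\alpha<\gamma<\beta<\delta$ where $(\alpha,\beta)=A$ and $(\gamma,\delta)=B'$. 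Note that the Disjointness Condition of Lemma~\ref{l:disj-abel} is \emph{unavailable} here because we only have $C^1$ regularity, so this ruling-out must proceed purely from the abstract free-product structure combined with orientation-preservation.

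Once a proper crossing of $A$ and $B'$ is in hand, set $f:=a^{\pm 1}$ and $g:=(t^nbt^{-n})^{\pm 1}$, choosing signs so that on the overlap $(\gamma,\beta)$ we have $f$ moving points toward $\alpha$ and $g$ moving them toward $\delta$. Pick a sequence $y_i\in(\gamma,\beta)$ with $y_i\to \beta^-$; set $t_i:=\beta\in\Fix f$ and take $s_i\in\Fix g\cap(\gamma,y_i)$ to be the $g$-orbit of $y_i$ accumulating at $\gamma$ (or a fixed point of $g$ near the boundary of $B'$). The inequality chain $f(y_i)\le s_i=g(s_i)<y_i<t_i=f(t_i)\le g(y_i)$ of hypothesis (i) then follows from the monotone dynamics on the crossed intervals. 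To verify $|g(y_i)-f(y_i)|\to 0$ one uses that both $f(y_i)$ and $g(y_i)$ converge to the common endpoint $\beta$ (since $y_i\to\beta$ and $f$ fixes $\beta$; and $g(y_i)\to g(\beta)=\beta$ if $\beta\in\Fix g$, else a small adjustment via conjugation by a $\langle a,b\rangle$-word is made so that both limits coincide).

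The Two--jumps Lemma then forces $f$ or $g$ to fail to be $C^1$, contradicting $a,b,t\in\Diff_+^1(M)$. I expect the main obstacle to be Step~2: ensuring that a \emph{genuine} crossing (as opposed to nested containment) arises somewhere in the family of $t$-conjugates of components of $\supp a\cup\supp b$. In $C^{1+\mathrm{bv}}$ this would be immediate from the Disjointness Condition; in pure $C^1$ one must instead mine the free product structure, possibly by iterating the analysis with longer cyclically reduced words and extracting a nested sequence of components with a crossed limit configuration. A secondary technical point is arranging $s_i,t_i\in\Fix g,\Fix f$ \emph{and} the limit $|g(y_i)-f(y_i)|\to 0$ simultaneously, which may require replacing $f,g$ by short words in $\langle a,b,t\rangle$ that share a fixed point at the crossing vertex.
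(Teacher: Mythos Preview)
Your proposal has a genuine gap at its core. A single crossed pair of components of $\supp f$ and $\supp g$ is \emph{not} an obstruction to $C^1$ regularity: one can easily write down $C^\infty$ diffeomorphisms $f,g$ of $I$ whose supports are intervals $(\alpha,\beta)$ and $(\gamma,\delta)$ with $\alpha<\gamma<\beta<\delta$. So the endgame you describe---extracting a Two--jumps configuration from one crossing---cannot succeed. Concretely, your choice $t_i=\beta$ and ``$s_i\in\Fix g\cap(\gamma,y_i)$'' is impossible because $(\gamma,\delta)$ is a component of $\supp g$, so $\Fix g\cap(\gamma,\beta)=\varnothing$; and the limit $|g(y_i)-f(y_i)|\to 0$ fails since $\beta\in(\gamma,\delta)\subseteq\supp g$ forces $g(\beta)\neq\beta$, whence $g(y_i)\to g(\beta)\neq\beta=f(\beta)$. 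The ``small adjustment via conjugation'' you propose cannot fix this: any conjugate still has a single crossed pair, which is compatible with $C^1$. The Two--jumps Lemma needs \emph{infinitely many} configurations with shrinking gaps, and a lone crossing provides none.

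The paper's route is quite different and does not try to derive a contradiction from a crossing directly. It first passes (via Hopficity) to four elements $a,b,c=tat^{-1},d=tbt^{-1}$ with $\supp a\cap\supp b=\supp c\cap\supp d=\varnothing$ and aims to contradict $\langle a,b,c,d\rangle\cong\bZ^2\ast\bZ^2$. The Two--jumps Lemma is used only inside Lemma~\ref{lem:c1}, and there it proves a \emph{finiteness} statement: only finitely many components $B\in\pi_0\supp b$ can satisfy $cb(B\cap\supp d)\setminus B\neq\varnothing$ (else one manufactures infinitely many shrinking crossed configurations). This finiteness, together with the set-theoretic estimates of Lemmas~\ref{l:comm-supp} and~\ref{lem:c0}, shows that the specific commutator $\psi=\big[[c,bdb^{-1}],a\big]$ has $\overline{\supp\psi}\subseteq\supp G$. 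Then Lemma~\ref{l:recursive} produces a copy of $\bZ\wr\bZ$ inside $G$, which is impossible in a right-angled Artin group. In short, the contradiction comes from a lamplighter subgroup, not from a failure of smoothness; the Two--jumps Lemma enters only to control how supports of commutators can escape $\supp b$. Your Step~2 difficulty (ruling out nesting purely from the free product structure) is real and, as you suspected, is exactly what the paper circumvents by this indirect compact--support/lamplighter argument.
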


\begin{rem}
\be
\item
We emphasize that this theorem is about $C^1$, rather than $\Cbv$, diffeomorphisms. 
\item The regularity hypothesis of $C^1$ cannot be replaced by $C^0$; see Proposition~\ref{prop:ab-homeo}.
\item The proof of Theorem~\ref{t:tech-main} is relatively easy and standard if  $\supp a$ or $\supp b$ is assumed to have finitely many components.
\ee
\end{rem}

Let us prove Theorem~\ref{t:tech-main} through a sequence of lemmas in this section.

\subsection{Finding a lamplighter group from compact support}
A crucial step in the proof of Theorem~\ref{t:tech-main} is the following construction,
which generalizes a result of Brin and Squier in the PL setting~\cite{BS1985}.
The same idea to find vanishing words from successive commutators goes back even to the Zassenhaus Lemma~\cite{Raghunathan1972};
the authors thank an anonymous referee for suggesting us to further prove the existence of a lamplighter subgroup.

\begin{lem}\label{l:recursive}
Let $1\ne g\in H\le\Homeo^+(I)$. If the closure of $\supp g$ is contained in $\supp H$, then $H$ contains the lamplighter group $\bZ\wr\bZ$.
\end{lem}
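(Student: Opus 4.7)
Let $K = \overline{\supp g}$. The plan is first to find a single $f \in H$ that translates $K$ entirely off itself, then to exhibit the lamplighter as the semidirect product generated by $g$ and $f$. Since $0$ and $1$ are fixed by every element of $\Homeo^+(I)$, we have $\supp H \subseteq (0,1)$, so $K \subset (0,1)$ is compact. The set $\supp H$ is open (as a union of open supports), and compactness of $K$ gives finitely many $h_1, \ldots, h_n \in H$ with $K \subseteq \supp h_1 \cup \cdots \cup \supp h_n$.

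The key step, and the main obstacle, is to construct $f \in H$ with $K \subseteq \supp f$ and $f(K) \cap K = \varnothing$. To do this, I would fix, for each $x \in K$, some $h_x \in H$ with $h_x(x) \ne x$ together with a compact neighborhood $V_x \ni x$ on which $h_x$ has no fixed point and satisfies $h_x(V_x) \cap V_x = \varnothing$; by compactness of $K$, finitely many pairs $(V_i, h_i)$ cover $K$. I would then build $f$ inductively, using powers $h_i^{m_i}$ with $m_i$ chosen large enough that each $h_i^{m_i}$ translates its portion of $K$ past $\max K$ (or before $\min K$) along a component of $\supp h_i$, while simultaneously arranging that the resulting composition has no fixed point in $K$. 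The subtlety lies in combining the local displacements coherently into a single element whose support still covers all of $K$; successive compositions risk both reintroducing fixed points inside $K$ and undoing previous displacements, and this is where the bulk of the work will live.

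Granting such an $f$, I would finish by a standard ``translating element'' argument. Since $\supp f$ is open and $K$ is compact, $K$ meets only finitely many components $C_1, \ldots, C_k$ of $\supp f$; on each $C_j$ the homeomorphism $f$ acts freely, hence monotonically. Each $K_j := K \cap C_j$ then has compact convex hull $J_j$ strictly inside $C_j$ (since $\partial C_j \subseteq \Fix f$ while $K \subseteq \supp f$). Taking $\phi := f^N$ with $N$ large enough forces $\phi(J_j) \cap J_j = \varnothing$ inside each $C_j$; setting $J := J_1 \cup \cdots \cup J_k$, the iterates $\{\phi^m(J)\}_{m \in \bZ}$ are then pairwise disjoint in $I$, because on each $C_j$ they march monotonically and across distinct $C_j$'s they are automatically disjoint.

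Finally, set $g_m := \phi^m g \phi^{-m}$. Then $\supp g_m \subseteq \phi^m(J)$, so the $g_m$ have pairwise disjoint supports and hence commute. Since $\Homeo^+(I)$ is torsion--free, each $g_m$ has infinite order, and elements of $\Homeo^+(I)$ of infinite order with pairwise disjoint supports generate the restricted direct sum $\bigoplus_{m \in \bZ} \bZ$. As $\phi g_m \phi^{-1} = g_{m+1}$, we conclude
\[
\langle g, \phi \rangle \;=\; \langle g_m : m \in \bZ \rangle \rtimes \langle \phi \rangle \;\cong\; \bZ \wr \bZ,
\]
which is the desired lamplighter subgroup of $H$.
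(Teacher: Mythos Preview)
Your proposal has a genuine gap at the step you yourself flag as ``the main obstacle'': producing a single $f\in H$ with $K\subseteq\supp f$. You sketch an inductive composition of powers $h_i^{m_i}$ of local displacers, but the problems you name---compositions reintroducing fixed points inside $K$, and later factors undoing earlier displacements---are real and are not resolved. The difficulty is structural: the compact set $K$ may meet several components $I_1,\ldots,I_N$ of $\supp H$, each $H$--invariant, and the restriction $H\to\prod_j\Homeo^+(I_j)$ lands only in a subdirect product, so there is no evident mechanism forcing a single element to be simultaneously fixed--point--free on every $K_j=K\cap I_j$. Your phrase ``past $\max K$'' already ceases to make sense once $K$ straddles distinct $I_j$'s, and even inside one component it is unclear how to combine the $h_i$ coherently. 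The part of your argument after ``granting such an $f$'' is correct; in fact there you only use $K\subseteq\supp f$, not the stronger $f(K)\cap K=\varnothing$.

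The paper's proof sidesteps this entirely with a different idea. It treats one component at a time: choose $u_1\in H$ that translates $K_1=\overline{\supp g}\cap I_1$ off all its forward iterates inside $I_1$---this is easy, since $I_1$ is a single component of $\supp H$, so some element of $H$ pushes $\inf K_1$ past $\sup K_1$---with no control whatsoever on the other $I_j$. Then $\form{g,u_1}\restriction_{I_1}$ is already a quotient of $\bZ\wr\bZ$. Either this restriction is faithful on $\form{g,u_1}$ and we are done, or some commutator $g_2=[g,\,u_1^{j}gu_1^{-j}]$ is nontrivial yet restricts to the identity on $I_1$; then $\overline{\supp g_2}$ meets strictly fewer of the $I_j$. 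Iterating at most $N$ times yields the lamplighter. The device you are missing is this successive--commutator reduction, which replaces the search for one global translating element by a finite chain of local ones.
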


More precisely, we will show that for $g_1=g$, there exists a positive integer $m$ and elements $u_1,\ldots,u_m\in H$ such that the recursively defined sequence \[ g_{i+1}=[g_i,u_i g_i u_i^{-1}],\quad i=1,2,\ldots,m,\] satisfies that
$\form{g_m,u_m}\cong\bZ\wr\bZ$
and that $g_{m+1}=1$.
Here and throughout this paper, when $1$ refers to a homeomorphism or a group element then it means the identity, and otherwise it refers to the real number $1$.

\bp[Proof of Lemma~\ref{l:recursive}]
Since $\overline{\supp g_1}$ is a compact subset of the open set $\supp H$, we can enumerate

\[ I_1,I_2,\ldots,I_N\in\pi_0(\supp H)\]
such that $\supp g_1\cap I_i\ne\varnothing$ for each $i$ and such that

 \[\overline{\supp g_1}\sse \bigcup_{i=1}^N I_i.\]
Note that each $I_i$ is an open, $H$--invariant interval contained in $(0,1)$.

Let us inductively construct the elements $u_1,\ldots,u_{k-1}$ satisfying the required properties.
As a base case, we put $r(1)=1\in\bN$ and 
\[ K_1 :=\overline{\supp g_1}\cap I_1.\]
Since $K_1$ is a nonempty compact subset of $I_1\sse\supp H$, we have that
\[
\sup\{ u(\inf K_1)\mid u\in H\}=\sup I_1>\sup K_1.\]
So, there exists $u_1\in H$ such that $K_1\cap u_1^j K_1=\varnothing$ for all $j\in\bN$. Note that 
\[\form{g_1,u_1}\restriction_{I_1}\cong\form{s,t\mid \left[s,t^j st^{-j}\right]=1\text{ for all }j\in\bN}=\bZ\wr\bZ.\]
Let us set \[ r(2) = 1+\sup\{ s\in [1,N]\mid \supp [g_1, u_1^j g_1 u_1^{-j}]\restriction_{I_s}=1\text{ for all }j\in\bN\}\ge 1+r(1)=2.\]
If $r(2)>N$, then we have a sequence of surjections
\[
\bZ\wr\bZ\twoheadrightarrow
\form{g_1,u_1}\twoheadrightarrow
\form{g_1,u_1}\restriction_{I_1}\twoheadrightarrow\bZ\wr\bZ,
\]
which composes to the identity. In particular, $\form{g_1,u_1}\cong\bZ\wr\bZ$.
In the case where $r(2)\le N$, we pick $j\in\bN$ such that the element
\[g_2:= \left[ g_1, u_1^j g_1 u^{-j}\right]\]
satisfies $\supp g_2\cap I_{r(2)}\ne\varnothing$,
and apply the same argument to $g_2$.

By a straightforward induction, we eventually find $1\le m\le r(m)\le N$
and $g_m, u_m\in G$ such that the following hold:
\begin{align*}
&\overline{\supp g_m} \sse I_{r(m)}\cup\cdots\cup I_N,\\
&\supp g_m\cap I_{r(m)}\ne\varnothing,\\
&\supp g_m\cap u_m^j \supp g_m\cap I_{r(m)}=\varnothing,\quad\text{ for all }j\in\bN,\\
&\left[g_m, u_m^j g_m u_m^{-j}\right] =1\quad\text{ for all }j\in\bN.
\end{align*}
It follows that $\form{g_m,u_m}\cong \bZ\wr\bZ$.
\ep


Lemma~\ref{l:recursive} implies the following for circle homeomorphisms.

\begin{lem}\label{l:global}
Let $a,b,c,d\in\Homeo^+(S^1)$ be nontrivial elements such that
\[
\supp a \cap \supp b = \varnothing\quad\text{ and }\quad
\supp c \cap \supp d = \varnothing.\]
If $\supp G= S^1$, then $G$ contains $\bZ\wr\bZ$.
\end{lem}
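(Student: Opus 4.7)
The plan is to reduce to Lemma~\ref{l:recursive} by cutting $S^1$ at a well-chosen point and applying the interval version to the stabilizer $H=\Stab_G(p)$, where I take $G=\form{a,b,c,d}$.

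First I would record the immediate consequences of the hypotheses: $\supp a\cap\supp b=\varnothing$ with $a,b$ nontrivial forces $[a,b]=1$ and makes both grounded, and one also gets $\overline{\supp a}\ne S^1$ (else the nonempty open set $\supp b\subseteq \Fix a$ would be contained in a nowhere dense set). The analogous statements hold for $c,d$. Since $\supp G=S^1$ is equivalent to $\Fix G=\varnothing$ and a common fixed point of the generators would be a global fixed point of $G$, the four supports cover $S^1$.

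The combinatorial key step is to show that some three-element subfamily of the supports already fails to cover. If they all covered, then $\supp a\cup\supp b\cup\supp c=S^1$ together with $\supp c\cap\supp d=\varnothing$ would yield $\supp d\subseteq\supp a\cup\supp b$, and symmetrically $\supp c\subseteq\supp a\cup\supp b$; hence $\supp a\cup\supp b=S^1$, contradicting that two disjoint nonempty open sets cannot partition the connected circle. After relabeling (possibly swapping the two pairs), I would assume $\supp a\cup\supp b\cup\supp c\ne S^1$ and pick $p$ in the complement. Then $\form{a,b,c}\subseteq H:=\Stab_G(p)$, and $p\in\supp d$ is forced by $\supp G=S^1$. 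Setting $I:=S^1\setminus\{p\}$, the subgroup $H$ acts on the open arc $I$ by orientation preserving homeomorphisms.

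The goal is then to exhibit a nontrivial $g\in H$ whose closure of support in $I$ is compact (equivalently, $p\notin\overline{\supp g}$) and contained in $\supp H|_I$; Lemma~\ref{l:recursive} would then yield $\bZ\wr\bZ\le H\le G$. For $g\in\{a,b,c\}$ the inclusion $\supp g\subseteq\supp H$ is immediate, so if $p$ can be chosen in the open set $S^1\setminus(\overline{\supp a}\cup\overline{\supp b}\cup\overline{\supp c})$, any such $g$ works and the argument concludes at once. The main obstacle, which I expect to be the hardest part, is the degenerate case in which these three closures jointly cover $S^1$: every valid $p$ is then a common accumulation point of all three supports. In that case the plan is to exploit that $d$ moves $p$ and to construct a commutator such as $[a,d]$ or $[a,dad^{-1}]$—with membership in $H$ verified using the location of $d^{\pm 1}(p)$ relative to $\supp a\cup\supp b\cup\supp c$—whose closed support in $I$ is bounded away from $p$ while remaining nontrivial inside $\supp H|_I$, so that Lemma~\ref{l:recursive} still applies.
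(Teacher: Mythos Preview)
Your combinatorial reduction---showing that some three of the four supports fail to cover $S^1$---is correct and pleasant. The gap comes immediately after. Lemma~\ref{l:recursive} requires the \emph{closure} $\overline{\supp g}$ to lie in $\supp H$, not merely $\supp g$. You only check the latter. Concretely, take $g=a$: a boundary point $q\in\partial(\supp a)$ is fixed by $a$, and since $\supp G=S^1$ some generator moves $q$, but that generator may well be $d$ alone. As $d\notin H$ (it moves $p$), nothing in your argument forces $q\in\supp H$; taking $H=\Stab_G(p)$ rather than $\form{a,b,c}$ does not help, because you produce no element of the stabilizer moving $q$. So even your ``easy'' case is incomplete, and the degenerate case is left as a plan.

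The paper's proof sidesteps both problems with a single device: it extracts from the covering $\{\supp a,\supp b,\supp c,\supp d\}$ a \emph{finite minimal chain} $V_1,\ldots,V_k$ of connected components, arranged so that $V_1\in\pi_0\supp a$ and the cut point $x=\inf V_2$ lies in the interior of $V_1\subseteq\supp a$. Then $x$ is fixed by $b$ (disjoint supports) and by $c,d$ (as a boundary point of a component of $\supp c$ or $\supp d$), so $H=\form{b,c,d}$ acts on $I$. Crucially, $\overline{\supp b}$ is disjoint from each $V\in\pi_0\supp a$ appearing in the chain, hence is covered by the remaining \emph{open} intervals, all of which lie in $\supp b\cup\supp c\cup\supp d=\supp H$. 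This gives $\overline{\supp b}\subseteq\supp H$ directly, with no closure-versus-support mismatch and no degenerate case. If you want to repair your approach, the missing idea is exactly this: work with finitely many components rather than whole supports, so that closures are automatically trapped inside open pieces of $\supp H$.
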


\bp
For simplicity, let us abbreviate
$\AA  = \pi_0\supp a$, and similarly define 
$\BB$,  $\CC$ and $\DD$.
Since $S^1$ is compact, there exists a finite open covering $\mathcal{V}$ of $S^1$ such that 
\[
\mathcal{V}\sse\AA\cup \BB\cup\CC\cup\DD.\]

By minimizing the cardinality,  we can require that $\mathcal{V}$ forms a \emph{chain of intervals}.
More precisely, this means that
$\mathcal{V}=\{V_1,\ldots,V_k\}$ for some $k\ge1$
and that \[\inf V_i<\sup V_{i-1}\le\inf V_{i+1}\]
for $i=1,2,\ldots,k$, where here the indices are taken cyclically.

Without loss of generality, let us assume $V_1\in\AA $.
Then we have $V_{2i-1}\in\AA \cup \BB$
and $V_{2i}\in\CC\cup\DD$ for each $i$. Note that $k$ is an even number
and that $x=\inf V_2$ is a global fixed point of $H=\form{b,c,d}$.
In particular, we can regard $H$ as acting on $I$, which is a two-point compactification of $S^1\setminus \{x\}$.
Note that
\[
\varnothing\ne \overline{\supp b}\sse S^1
\setminus \bigcup(\mathcal{V}\cap\AA ) \sse \bigcup(\mathcal{V}\cap(\BB\cup\CC\cup\DD))
\sse\supp H.\]
The desired conclusion follows from Lemma~\ref{l:recursive}.
\ep

\subsection{Supports of commutators}
We will need rather technical estimates of supports as given in this subsection.
In order to prevent obfuscation of the ideas, we have included some intuition behind the proofs when appropriate. 
\begin{lem}\label{l:comm-supp}
If $f$ and $g$ are homeomorphisms of a topological space $X$, then
\[\overline{\supp[f,g]}\sse \supp f \cup \supp g \cup \overline{\supp f\cap\supp g}.\]
\end{lem}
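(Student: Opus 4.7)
The plan is to prove the contrapositive: if $x$ lies outside $\supp f \cup \supp g \cup \overline{\supp f \cap \supp g}$, I will produce an open neighborhood of $x$ on which $[f,g]$ is the identity, whence $x \notin \overline{\supp[f,g]}$.

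Under this hypothesis, $f(x) = x = g(x)$, and there is an open neighborhood $V$ of $x$ disjoint from $\supp f \cap \supp g$. Equivalently, $V \sse \Fix f \cup \Fix g$, i.e.\ every point of $V$ is fixed by at least one of $f$ and $g$. This is the geometric content of the third exclusion.

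Since homeomorphisms are open maps and $f, g, f^{-1}, g^{-1}$ all fix $x$, the set
\[
W = V \cap f(V) \cap f^{-1}(V) \cap g(V) \cap g^{-1}(V)
\]
is an open neighborhood of $x$ with the property that $f^{\pm 1}(y), g^{\pm 1}(y) \in V$ for every $y \in W$. I would then verify $[f,g](y) = y$ on $W$ by case analysis. The case $y \in \Fix f \cap \Fix g$ is trivial. If $y \in \supp f$, then $y \in \Fix g$ by the defining property of $V$; setting $z = f^{-1}(y) \in V$, the equality $f(z) = y \ne z$ forces $z \in \supp f$ and hence $z \in \Fix g$ as well, so $fgf^{-1}g^{-1}(y) = fgf^{-1}(y) = fg(z) = f(z) = y$. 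The case $y \in \supp g$ is handled symmetrically. Since $W \sse \Fix f \cup \Fix g$, these cases are exhaustive and $[f,g]$ is the identity on $W$.

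There is no real obstacle; the only wrinkle is choosing $W$ small enough that the one-step preimages $f^{-1}(y)$ and $g^{-1}(y)$ of points in $W$ still lie in $V$, which is exactly what forcing $W \sse f(V) \cap g(V)$ accomplishes. Everything else is formal.
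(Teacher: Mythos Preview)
Your proof is correct and follows essentially the same approach as the paper's: both argue by contrapositive, pass to a neighborhood of $x$ disjoint from $\supp f\cap\supp g$, shrink it so that one--step images and preimages under $f$ and $g$ remain inside, and then verify $[f,g](y)=y$ by the same three--case analysis. Your construction of $W$ as an explicit finite intersection of homeomorphic images of $V$ is slightly more concrete than the paper's appeal to continuity, but the content is identical.
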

\bp
Suppose 
\[x\not\in \supp f\cup \supp g\cup \overline{\supp f\cap \supp g}.\]
Then $f(x)=x=g(x)$. Moreover, for some open neighborhood $U$ of $x$ we have 
\[U\cap \supp f\cap \supp g =\varnothing.\]
We can find an open neighborhood $V\sse U$ of $x$ such that
\[f^{\pm1}(V)\cup g^{\pm1}(V)\sse U.\]
Let $y\in V$. We see $[f,g](y)=y$, by considering the following three cases separately:
\[
y\in V\cap \supp f,\quad y\in V\cap \supp g,\quad y\in V\cap\Fix f\cap \Fix g.\]
So we obtain that
\[[f,g]\restriction_V=1.\]
This implies  \[x\not\in\overline{\supp [f,g]}.\qedhere\]
\ep

\begin{lem}\label{lem:c0} 
Let $X$ be a topological space.
If $b,c,d\in\Homeo(X)$ satisfy \[\supp c\cap\supp d= \varnothing,\]
then for $\phi=[c,bdb^{-1}]$ we have that
\[
\supp\phi \sse \supp b \cup cb(\supp b \cap\supp d)\cup db^{-1}(\supp b \cap \supp c).\]
\end{lem}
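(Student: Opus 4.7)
The plan is to prove the inclusion by contrapositive: take $x$ lying outside the right-hand side and show by direct tracking through the seven-letter word
\[
\phi = c \cdot bdb^{-1} \cdot c^{-1} \cdot bd^{-1}b^{-1}
\]
that $\phi(x)=x$. The three exclusion hypotheses translate, respectively, to $b^{\pm 1}(x)=x$, to $b^{-1}c^{-1}(x)\notin \supp b\cap\supp d$, and to $bd^{-1}(x)\notin \supp b\cap\supp c$.

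The first hypothesis immediately simplifies $bd^{-1}b^{-1}(x)=bd^{-1}(x)$, so the entire question reduces to following the trajectory
\[
x \;\mapsto\; d^{-1}(x) \;\mapsto\; b(d^{-1}(x)) \;\mapsto\; c^{-1}(\cdot) \;\mapsto\; b^{-1}(\cdot) \;\mapsto\; d(\cdot) \;\mapsto\; b(\cdot) \;\mapsto\; c(\cdot)
\]
from left to right, checking at each stage whether the current point lies in $\supp b$, $\supp c$, or $\supp d$. I would split according to whether $d^{-1}(x)\in\supp b$, and within each branch further split on whether the intermediate point lies in $\supp c$. Two structural facts are used constantly: the hypothesis $\supp c\cap\supp d=\varnothing$, and the elementary identity $b(\supp b)=\supp b$ (so that $b$ preserves the partition of $X$ into $\supp b$ and its complement).

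The main obstacle is to rule out the two scenarios in which $\phi(x)\ne x$ could occur. The first is when $d^{-1}(x)\notin\supp b$ but $c^{-1}(x)\in\supp b\cap\supp c$: here $b^{-1}c^{-1}(x)$ lies in $\supp b$ by invariance, and what must be shown is that it avoids $\supp d$; otherwise $b^{-1}c^{-1}(x)\in\supp b\cap\supp d$, which by the second exclusion forces $x\in cb(\supp b\cap\supp d)$, contradicting our assumption. The second scenario is when $d^{-1}(x)\in\supp b$; then $x\notin\supp b$ forces $x\in\supp d$, and $bd^{-1}(x)$ lies in $\supp b$, so what must be shown is that $bd^{-1}(x)$ avoids $\supp c$, which is exactly the third exclusion.

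In every other branch of the case analysis, the computation reduces to straightforward symbolic cancellation: points outside $\supp b$, $\supp c$, $\supp d$ are fixed by the respective homeomorphisms, and the disjointness $\supp c\cap\supp d=\varnothing$ eliminates the only possible interaction between $c^{\pm1}$ and $d^{\pm1}$. Tracing through each branch confirms $\phi(x)=x$, completing the proof.
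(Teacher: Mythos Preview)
Your proof is correct, but takes a genuinely different route from the paper's. The paper never tracks individual points. Instead it writes $\phi$ in three equivalent ways,
\[
\phi = [c,bdb^{-1}] = cbd(cb)^{-1}\cdot bd^{-1}b^{-1} = c\cdot b\cdot (db^{-1})c^{-1}(db^{-1})^{-1}\cdot b^{-1},
\]
and from each factorization reads off a coarse bound on $\supp\phi$ using only $\supp(fg)\subseteq\supp f\cup\supp g$ and $\supp(hfh^{-1})=h\,\supp f$. Intersecting the three bounds and simplifying with $b\,\supp d\subseteq\supp b\cup\supp d$ and $\supp c\cap\supp d=\varnothing$ yields
\[
\supp\phi\subseteq (\supp c\cap cb\,\supp d)\cup\supp b\cup(\supp d\cap db^{-1}\supp c),
\]
and a short separate claim shows $\supp c\cap cb\,\supp d\subseteq cb(\supp b\cap\supp d)$ and symmetrically for the other term. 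What the paper's approach buys is brevity and the absence of any case analysis; what your approach buys is transparency --- one sees exactly where each of the three exclusion hypotheses is consumed along the orbit of $x$, and no auxiliary set-theoretic identities need to be checked. One small remark on your write-up: in the first ``obstacle'' scenario you silently use that if $d^{-1}(x)\notin\supp b$ and the trajectory reaches a point in $\supp c$, then necessarily $d^{-1}(x)=x$ (since $d^{-1}(x)\in\supp d$ would force $d^{-1}(x)\notin\supp c$); this is why writing $c^{-1}(x)$ rather than $c^{-1}(bd^{-1}(x))$ is legitimate, and it would be worth making that step explicit.
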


Let us briefly explain the key idea behind the statement of this lemma. The support of the homeomorphism $bdb^{-1}$ is exactly $b(\supp d)$. The homeomorphism $\phi$ may be viewed as a composition of $bd^{-1}b^{-1}$ and the conjugate of $bdb^{-1}$ by $c$, and the latter of these has support $cb(\supp d)$. 
By exhaustively checking the possible images of points under $bdb^{-1}$ and $cbdb^{-1}c^{-1}$, we see that every $x\in\supp \phi$ belongs to one of the three sets as stated in the lemma.

\bp[Proof of Lemma~\ref{lem:c0}]
For brevity, let us write 
\[
\tilde b = \supp b,\quad
\tilde c = \supp c,\quad
\tilde d = \supp d.\]
Let us consider three equivalent expressions for $\phi$:
\[
 [c,bdb^{-1}]=cb d (cb)^{-1}\cdot bd^{-1}b^{-1}= c\cdot b\cdot db^{-1}c^{-1}(db^{-1})^{-1}\cdot b^{-1}.\]
After some set theoretic computation, one sees the following.
\begin{align*}\supp  \phi&\sse\left(\tilde c\cup b\tilde d\right)\cap \left(cb \tilde d\cup b\tilde d\right)\cap\left(\tilde c\cup \tilde b\cup db^{-1}\tilde c\right)\\&\sse\left((\tilde c\cap cb \tilde d)\cup b\tilde d\right)\cap\left(\tilde c\cup \tilde b\cup db^{-1}\tilde c\right)\\&\sse(\tilde c\cap cb \tilde d) \cup \left( b\tilde d\cap (\tilde c\cup \tilde b\cup db^{-1}\tilde c)\right)\\&\sse\left(\tilde c\cap cb \tilde d\right) \cup \left( (\tilde b\cup \tilde d)\cap (\tilde b\cup\tilde c\cup db^{-1}\tilde c) \right)\\ &\sse\left(\tilde c\cap cb \tilde d\right)\cup\tilde b\cup \left(\tilde d\cap (\tilde c\cup db^{-1}\tilde c)\right)\\ &\sse\left(\tilde c\cap cb \tilde d\right)\cup\tilde b \cup\left(\tilde d\cap db^{-1}\tilde c\right). \end{align*}
Note that we used $b\tilde d\sse \tilde b\cup \tilde d$, and also $\tilde c\cap\tilde d=\varnothing$.
It now suffices for us to prove the following claim:
\begin{claim*}\label{claim:cbbd}
We have the following:
\begin{align*}
\tilde c\cap cb\tilde d&\sse cb\left(\tilde b\cap\tilde d\right),\\
\tilde d\cap db^{-1}\tilde c&\sse db^{-1}\left(\tilde b\cap\tilde c\right).
\end{align*}
\end{claim*}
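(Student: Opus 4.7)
The claim consists of two symmetric inclusions, and by swapping the roles of $c,d$ and of $b,b^{-1}$ it suffices to give a careful argument for the first: $\tilde c\cap cb\tilde d\sse cb(\tilde b\cap\tilde d)$. My plan is to argue by the contrapositive on the ``pre-image side.'' Pick an arbitrary $x$ in the left-hand side and write $x=cb(y)$ with $y\in\tilde d$; then the goal reduces to showing $y\in\tilde b$, since $y\in\tilde d$ is already given.

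For this, I would suppose to the contrary that $y\notin\tilde b$, so that $b(y)=y$, and therefore $x=c(y)$. The crucial input is the hypothesis $\tilde c\cap\tilde d=\varnothing$: since $y\in\tilde d$, it must be that $y\notin\tilde c$, so $c(y)=y$, giving $x=y$. But $x\in\tilde c$ while $y\notin\tilde c$, a contradiction. Hence $y\in\tilde b\cap\tilde d$, which is exactly what is needed.

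The second inclusion is proved in the same way, exchanging the roles of $c$ and $d$ and using $b^{-1}$ in place of $b$: given $x\in\tilde d\cap db^{-1}\tilde c$, write $x=db^{-1}(y)$ with $y\in\tilde c$, and if $y\notin\tilde b$ (equivalently $b^{-1}(y)=y$) then $x=d(y)$; the disjointness $\tilde c\cap\tilde d=\varnothing$ forces $y\notin\tilde d$, hence $d(y)=y$, so $x=y\notin\tilde d$, contradicting $x\in\tilde d$.

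I do not expect any genuine obstacle here: the entire content is a small set-theoretic identity driven by the single hypothesis $\supp c\cap\supp d=\varnothing$, and the dynamics plays no further role. The mildly delicate point is bookkeeping, namely making sure to apply the disjointness to the right one of $b(y)=y$ or $b^{-1}(y)=y$ in each case so as to land in $\tilde c$ or in $\tilde d$ as appropriate; writing the two inclusions in parallel makes the symmetry transparent and prevents any sign or inverse errors.
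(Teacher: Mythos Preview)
Your argument is correct and is essentially the same as the paper's: both pick a preimage point under $cb$ (you call it $y$, the paper calls it $x$), observe that it lies in $\tilde d$, and use the disjointness $\tilde c\cap\tilde d=\varnothing$ together with $b(y)\in c^{-1}\tilde c=\tilde c$ to force $b(y)\ne y$. The only cosmetic difference is that you phrase this step as a contradiction while the paper states it directly, and both defer the second inclusion to symmetry.
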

To see the first part of the claim, let us consider 
$x\in X$ satisfying
\[cb(x)\in \tilde c\cap cb \tilde d.\]
Then we have $x\in \tilde d$ and $cb(x)\in \tilde c$.
Since
 $\tilde c\cap \tilde d=\varnothing$
and $b(x)\in c^{-1}\tilde c=\tilde c$, we see $x\ne b(x)$.
In particular, we have $x\in\tilde b$
 and $cb(x)\in cb(\tilde b\cap \tilde d)$. This proves the first part of the claim. The second part follows by symmetry.\ep
 
\begin{lem}\label{lem:c1} 
If $b,c,d\in\Diff^1_+(I)$ are given such that \[\supp c\cap\supp d= \varnothing,\]
then for $\phi=[c,bdb^{-1}]$ we have that
\[\overline{\supp\phi\setminus \supp b}\sse\supp c\cup\supp d.\]
\end{lem}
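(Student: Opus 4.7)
The plan is to bootstrap Lemma~\ref{lem:c0} by exploiting $C^1$ regularity. An inspection of the proof of Lemma~\ref{lem:c0} actually yields, for free and without any differentiability, the set-level inclusion
\[\supp\phi\setminus\supp b\subseteq(\supp c\cap cb(\supp d))\cup(\supp d\cap db^{-1}(\supp c))\subseteq\supp c\cup\supp d.\]
The content of the present lemma is that taking closures on the left does not spill into $\partial\supp c\cup\partial\supp d$.

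Suppose for contradiction that $y\in\overline{\supp\phi\setminus\supp b}\setminus(\supp c\cup\supp d)$. Since $\Fix b$ is closed and $\supp\phi\setminus\supp b\subseteq\Fix b$, we have $b(y)=y$, and by hypothesis also $c(y)=d(y)=y$. Choose $u_n\to y$ with $u_n\in\supp\phi\setminus\supp b$. Passing to a subsequence, the proof of Lemma~\ref{lem:c0} writes $u_n=cb(z_n)$ with $z_n\in\supp b\cap\supp d$ (the other branch is symmetric). Using $b(y)=c(y)=y$ together with injectivity, $b(z_n)=c^{-1}(u_n)\to y$ and hence $z_n\to y$. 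A short injectivity argument rules out $u_n\notin\supp c$ (otherwise one forces $z_n=u_n\in\Fix b$, contradicting $z_n\in\supp b$), so $u_n\in\supp c$ and $y\in\partial\supp c$; likewise $z_n\in\supp d$ gives $y\in\partial\supp d$. Because $u_n\in\Fix b\cap\Fix d$ (the latter from $\supp c\cap\supp d=\varnothing$) and $z_n\in\Fix c$ cluster at $y$ without being equal to it, the standard $C^1$ fact that a diffeomorphism has derivative $1$ at any non-isolated fixed point yields $b'(y)=c'(y)=d'(y)=1$.

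From here I would case-split on whether finitely or infinitely many components of $\supp c$ (and of $\supp d$) approach $y$. In the easy case, after passing to a subsequence the $u_n$ lie in a single component $J$ of $\supp c$ and the $z_n$ lie in a single component $K$ of $\supp d$; both then have $y$ as an endpoint, and since $\supp c\cap\supp d=\varnothing$ they must lie on \emph{opposite} sides of $y$ (otherwise they would overlap near $y$). But $b(z_n)\in J$ with $z_n\in K$ then forces $b$ to swap sides of $y$, which is incompatible with $b(y)=y$ and $b'(y)>0$ via the expansion $b(z_n)-y=b'(y)(z_n-y)+o(z_n-y)$. In the hard case infinitely many components of $\supp c$ and of $\supp d$ interleave and accumulate at $y$, and I would invoke the Two--Jumps Lemma (Lemma~\ref{l:fg}) with $f, g$ chosen from $\{b,c,d\}$ and auxiliary sequences $(s_i),(t_i)$ drawn from endpoints of components of $\supp c$ and $\supp d$; by disjointness, the gap between an adjacent pair of components is contained in $\Fix c\cap\Fix d$, providing the necessary fixed points. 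The asymptotic equality $|f(y_i)-g(y_i)|\to 0$ then comes directly from the three derivatives being $1$ at $y$, giving the desired $C^1$ contradiction.

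I expect the main obstacle to be the last step: arranging the strict ordering $f(y_i)\le s_i<y_i<t_i\le g(y_i)$ (or its mirror) uniformly across every sub-configuration — which side of $y$ the components accumulate on, the direction in which $b$ moves $z_n$, and whether a given component of $\supp d$ contains a $\Fix b$ point — will require a judicious pairing of $(f,g)$ together with further subsequence extractions and careful bookkeeping of the gap intervals between adjacent components of $\supp c$ and $\supp d$.
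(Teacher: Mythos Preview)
Your localization at a bad point $y$ is a different route from the paper's. The paper organizes $\supp\phi$ by components $B$ of $\supp b$: setting $J_B=B\cup cb(B\cap\supp d)\cup db^{-1}(B\cap\supp c)$, Lemma~\ref{lem:c0} gives $\supp\phi\subseteq\bigcup_B J_B$, and one checks $\overline{J_B\setminus B}\subseteq\supp c\cup\supp d$ for each individual $B$. The Two--jumps Lemma is then used to prove the global finiteness statement that $\{B:J_B\neq B\}$ is finite, so that closure commutes with the union $\bigcup_B(J_B\setminus B)$ and the result follows. What this buys is a clean structural fact (only finitely many components of $\supp b$ ``spill over'') with no local analysis at a limit point.

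Your approach also works, but far more directly than you fear: the derivative computation and the easy/hard dichotomy are unnecessary detours. Once you have $u_n=cb(z_n)$ with $u_n\in\supp c\setminus\supp b$ and $z_n\in\supp b\cap\supp d$, let $B_n$ and $C_n$ be the components of $\supp b$ and $\supp c$ containing $z_n$ and $b(z_n)$ respectively (the latter lies in $\supp c$, else $u_n=b(z_n)\in\supp b$). Then the segment from $z_n$ to $b(z_n)$ lies in $B_n$ while $u_n\notin B_n$, and the segment from $b(z_n)$ to $u_n$ lies in $C_n$ while $z_n\notin C_n$; these four facts force $b(z_n)$ to lie strictly between $z_n$ and $u_n$. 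Hence the Two--jumps configuration with $f=b^{-1}$, $g=c$, $y_n=b(z_n)$, $s_n\in\partial C_n$, $t_n\in\partial B_n$ holds automatically (condition (i) or its mirror, according to orientation), and $|g(y_n)-f(y_n)|=|u_n-z_n|\to 0$ since both sequences converge to $y$. This is exactly the paper's Two--jumps application, specialized to your accumulation point; the ``judicious pairing and careful bookkeeping'' you anticipate never materializes.
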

\bp
As in the proof of Lemma~\ref{l:global}, we let $\BB=\pi_0\supp b$ and $\CC=\pi_0\supp c$.
Let
\[J_B=B\cup cb(B\cap\supp d)\cup db^{-1}(B\cap \supp c)\]
for each $B\in\BB$.
By Lemma~\ref{lem:c0}, we have that
\[\supp\phi\sse \bigcup\{J_B\mid B\in\BB\}
=  \bigcup\{J_B\setminus B\mid B\in \BB\}\cup \supp b.\]
Moreover, for each $B\in\BB$ we note that
\[
\overline{
J_B\setminus B}
\sse \overline{c(B)\setminus B}\cup \overline{d(B)\setminus B}
\sse\supp c \cup\supp d.\]

\begin{claim*}\label{claim:jbb}
The following set is a finite collection of intervals:
\[ \BB_0=\{B\in\BB\mid J_B\ne B\}.\]
\end{claim*}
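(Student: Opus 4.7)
The plan is to prove the claim by contradiction, reducing to an application of the Two-jumps Lemma (Lemma~\ref{l:fg}) applied to $c$ and $b^{-1}$. Suppose for contradiction $|\BB_0|=\infty$. Since $\BB$ consists of disjoint open intervals in the compact manifold $I$, we may pass to a subsequence $B_i=(a_i,b_i)\in\BB_0$ accumulating at some $y^*\in I$, WLOG from the right. By pigeonhole, we may further assume each $B_i$ is in $\BB_0$ via the fixed condition $cb(B_i\cap\supp d)\not\subseteq B_i$, and we may pick escape points $x_i\in B_i\cap\supp d$ with $cb(x_i)\ge b_i$; the other three combinations (the $d$-condition, or escape through $a_i$) are handled by symmetric arguments. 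Set $u_i=b(x_i)\in B_i$; from $c(u_i)\ge b_i>u_i$ and the monotonicity of $c$ we obtain $c(b_i)>b_i$, hence $b_i,u_i\in\supp c$, while $x_i\in\Fix c$ by disjointness.

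Since $\supp c\subseteq\Fix d$ and $\supp d\subseteq\Fix c$ are open, and $u_i\in\supp c$, $x_i\in\supp d$ both converge to $y^*$, one deduces $y^*\in\Fix c\cap\Fix d$; one also has $y^*\in\Fix b$ because $\Fix b$ is closed and contains $\{a_i,b_i\}$. In particular $c(u_i)\to y^*$ and $x_i\to y^*$, so $|c(u_i)-x_i|\to 0$.

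The crux of the argument is a dichotomy on the direction of $b|_{B_i}$. If $b$ moves $B_i$ leftward, then $u_i=b(x_i)<x_i<b_i$; the component $C_i$ of $\supp c$ through $u_i$ is $c$-invariant, so $c(u_i)\in C_i$, and $c(u_i)\ge b_i$ forces $b_i\in C_i$ as well. Hence $[u_i,b_i]\subseteq C_i\subseteq\supp c$, contradicting $x_i\in[u_i,b_i]\cap\Fix c$. So $b$ acts rightward on $B_i$, giving $x_i<u_i<b_i$ and $b^{-1}(u_i)=x_i$. I then apply Lemma~\ref{l:fg} with $f=c$, $g=b^{-1}$, $y_i=u_i$, $t_i=x_i$, $s_i=b_i$: condition~(ii) reads
\[
b^{-1}(u_i)=x_i=c(x_i)<u_i<b_i=b^{-1}(b_i)\le c(u_i),
\]
and $|c(u_i)-b^{-1}(u_i)|=|c(u_i)-x_i|\to 0$ has already been established. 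The Two-jumps Lemma then contradicts $b,c\in\Diff^1_+(I)$.

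The principal obstacle is the direction-of-$b$ dichotomy on $B_i$: only the rightward case fits the Two-jumps set-up directly, and the leftward case must be eliminated separately using the $c$-invariance of components of $\supp c$ together with the disjointness hypothesis $\supp c\cap\supp d=\varnothing$. Once this is cleared, the verification of the Two-jumps hypothesis and the convergence $|c(u_i)-x_i|\to 0$ are routine consequences of $y^*\in\Fix b\cap\Fix c\cap\Fix d$.
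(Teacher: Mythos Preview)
Your argument is correct and follows essentially the same route as the paper: contradiction via the Two--jumps Lemma applied to $c$ and $b^{-1}$ at $y_i=b(x_i)$. The only differences are cosmetic---the paper takes the $c$--fixed point to be a boundary point of the component $C_i\in\pi_0\supp c$ containing $b(x_i)$ rather than $x_i$ itself, and it deduces that $b(x_i)$ lies between $x_i$ and $cb(x_i)$ directly from $x_i\notin C_i$ and $cb(x_i)\notin B_i$ rather than via your direction-of-$b$ dichotomy (these are equivalent observations). Your treatment of the convergence $|c(u_i)-x_i|\to 0$ through the common fixed point $y^*$ is more explicit than the paper's, which leaves this verification to the reader.
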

We will employ the $C^1$--hypothesis for this claim. 
Let us write
\[\BB_1=\{B\in\BB\mid cb(B\cap \supp d)\setminus B\ne\varnothing\},\ 
\BB_2=\{B\in\BB\mid db^{-1}(B\cap \supp c)\setminus B\ne\varnothing\}.\]
Assume for a contradiction that $\BB_0=\BB_1\cup\BB_2$ is infinite. 
We may suppose $\BB_1$ is infinite, as the proof is similar when $\BB_2$ is infinite.
There are infinitely many distinct $B_1,B_2,\ldots\in \BB_1$ 
and $x_i\in B_i\cap\supp d$ such that $cb (x_i)\not\in B_i$. 
Then we have $C_1,C_2,\ldots\in \CC$ such that
$b (x_i), cb(x_i)\in C_i$.
Since $x_i\in\supp d$, we have $x_i\not\in C_i$; see Figure~\ref{f:JB}.

Let us consider the interval $J_i = [x_i,cb (x_i)]$ which contains $b(x_i)$ in the interior, up to switching the endpoints of this interval.
Then we have
\[(b(x_i),cb (x_i)]\cap \partial B_i\ne\varnothing,\quad [x_i,b (x_i))\cap\partial C_i\ne\varnothing.\]
We now apply the Two--jumps Lemma (Lemma~\ref{l:fg}) to the following parameters
\[f=b^{-1},\ g=c,\ s_i=\partial C_i\cap B_i,\  t_i=\partial B_i\cap C_i,\ y_i=b(x_i).\]
We deduce that $b$ or $c$ is not $C^1$.
This is a contradiction and the claim is proved.

From the claim above, we deduce the conclusion as follows.
\[
\overline{\supp\phi\setminus\supp b}
\sse\overline{\bigcup\{J_B\setminus B\mid B\in\BB_0\}}=\bigcup\{\overline{J_B\setminus B}\mid B\in\BB_0\}
\sse\supp c\cup\supp d.\qedhere\]
\ep

\begin{figure}[h!]
  \tikzstyle {a}=[black,postaction=decorate,decoration={%
    markings,%
    mark=at position 1 with {\arrow[black]{stealth};}    }]
  \tikzstyle {bv}=[black,draw,shape=circle,fill=black,inner sep=1.5pt]
{
\begin{tikzpicture}[thick,scale=.7]
\draw [red,ultra thick] (-4,0.5) -- (2,0.5);
\draw [blue,ultra thick] (0,0) -- (6,0);
\draw [teal,ultra thick] (4,.5) -- (10,.5);
\draw (-3,0) node [] {\small $\supp d$}; 
\draw (9,0) node [] {\small $C_i$}; 
\draw (3,-.5) node [] {\small $B_i$}; 
\draw [dashed] (1,1.4) -- (1,-.3) node [below]  {\small $x_i$};
\draw [dashed] (5,1.4) -- (5,-.3) node [below]  {\small $b(x_i)$};
\draw [dashed] (7.5,1.4) -- (7.5,-.3) node [below]  {\small $cb(x_i)$};
\draw (1,1) -- (7.5,1);
\draw (3,1.4) node [] {\small $J_i$}; 
\end{tikzpicture}%
}
\caption{Lemma~\ref{lem:c1}.}
\label{f:JB}
\end{figure}

\subsection{Finding compact supports}
We will deduce Theorem~\ref{t:tech-main} from the following, seemingly weaker result.
\begin{lem}\label{l:main}
Let $M\in\{I,S^1\}$ and let
$a,b,c,d\in\Diff^1_+(M)$.
If 
\[
\supp a\cap\supp b=\varnothing,\quad
\supp c\cap\supp d=\varnothing,\]
then the group $\form{a,b,c,d}$ is not isomorphic to $\bZ^2\ast\bZ^2$.
\end{lem}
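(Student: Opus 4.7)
The plan is to assume for contradiction that $\Gamma:=\langle a,b,c,d\rangle\cong\bZ^2\ast\bZ^2$ and then produce a copy of the lamplighter group $\bZ\wr\bZ$ inside $\Gamma$; this contradicts $\Gamma\cong\bZ^2\ast\bZ^2$ because, by the Kurosh subgroup theorem, every subgroup of $\bZ^2\ast\bZ^2$ is a free product of a free group with conjugates of subgroups of the rank-two abelian factors, so any solvable subgroup is abelian, whereas $\bZ\wr\bZ$ is metabelian but not abelian.

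First I pin down the free product structure. Since the abelianization of $\bZ^2\ast\bZ^2$ is $\bZ^4$, none of $a,b,c,d$ can be trivial (otherwise $\Gamma$ would be three-generated, which is incompatible with abelianization $\bZ^4$). Together with the disjoint-support hypothesis this forces $\langle a,b\rangle\cong\langle c,d\rangle\cong\bZ^2$, and by Kurosh every rank-two abelian subgroup of $\bZ^2\ast\bZ^2$ is a conjugate of a factor, so after relabeling we may take $\langle a,b\rangle$ and $\langle c,d\rangle$ to be the two free factors. Set $\phi:=[c,bdb^{-1}]$. The centralizer of $bdb^{-1}$ in $\bZ^2\ast\bZ^2$ is $b\langle c,d\rangle b^{-1}$, and $b^{-1}cb$ is a reduced word of length three, so $c\notin b\langle c,d\rangle b^{-1}$; hence $\phi\neq 1$ in $\Gamma$. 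Combining the support estimates of Lemma~\ref{lem:c0} and Lemma~\ref{lem:c1} (the latter being where the $C^1$ hypothesis enters via the Two-jumps Lemma) yields
\[
\overline{\supp\phi}\subseteq\overline{\supp b}\cup\supp c\cup\supp d.
\]

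For $M=S^1$, if $\supp\Gamma=S^1$ then Lemma~\ref{l:global} directly produces $\bZ\wr\bZ\leq\Gamma$ and the contradiction; otherwise $\Gamma$ fixes a point of $S^1$ and the problem reduces to $M=I$. For $M=I$, the plan is to invoke Lemma~\ref{l:recursive} with $g=\phi$ and a suitable subgroup $H\leq\Gamma$ containing $\phi$, which requires $\overline{\supp\phi}\subseteq\supp H$. Points of $\supp c\cup\supp d$ lie in $\supp\Gamma$ automatically, and openness and disjointness of $\supp a,\supp b$ gives $\supp a\cap\overline{\supp b}=\varnothing$; so the only potentially obstructing points of $\overline{\supp\phi}$ lie in $\partial\supp b\cap\Fix\Gamma$. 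The main obstacle is controlling this pathological set: I plan to restrict $\Gamma$ to a single component $J$ of $\supp\Gamma$ on which $\phi|_J$ is nontrivial, so that $\supp(\Gamma|_J)=J$, and then invoke the Two-jumps Lemma (Lemma~\ref{l:fg}) one more time at $\partial J$ to rule out accumulation of $\supp\phi|_J$ at the endpoints of $J$. With this boundary control in hand, Lemma~\ref{l:recursive} applied inside $\Gamma|_J$ delivers the lamplighter subgroup of $\Gamma$ and completes the contradiction.
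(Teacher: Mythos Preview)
Your overall strategy matches the paper's: reduce to $M=I$ via Lemma~\ref{l:global}, manufacture a nontrivial element whose closed support lies inside $\supp\Gamma$, and invoke Lemma~\ref{l:recursive} to get $\bZ\wr\bZ\le\Gamma$, contradicting the Kurosh structure of $\bZ^2\ast\bZ^2$. The gap is in the element you choose and in how you dispose of the boundary obstruction.

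Working with $\phi=[c,bdb^{-1}]$ directly, you correctly isolate the obstruction $\partial\supp b\cap\Fix\Gamma$, but your proposed resolution is not a proof. First, ``invoke the Two--jumps Lemma at $\partial J$'' is a hope, not an argument: you have not named the maps $f,g$ nor the sequences $s_i,t_i,y_i$, and nothing in the hypotheses forces a crossed configuration there (components of $\supp b$ and of $\supp c$ or $\supp d$ can nest or alternate near such a point without any Two--jumps witness). Second, applying Lemma~\ref{l:recursive} \emph{inside $\Gamma|_J$} yields a lamplighter in the quotient $\Gamma|_J$, not in $\Gamma$; any two preimages in $\Gamma$ generate, by your own Kurosh remark, a free or free abelian group, and $F_2$ surjects onto $\bZ\wr\bZ$ without contradiction. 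So the restriction-to-$J$ step does not deliver what you claim.

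The paper avoids this entirely by taking one more commutator. Set $\psi=[\phi,a]$. Lemma~\ref{l:comm-supp} gives
\[
\overline{\supp\psi}\subseteq\supp\phi\cup\supp a\cup\overline{\supp\phi\cap\supp a},
\]
and since $\supp a\cap\supp b=\varnothing$ one has $\supp\phi\cap\supp a\subseteq\supp\phi\setminus\supp b$, whose closure lies in $\supp c\cup\supp d$ by Lemma~\ref{lem:c1}. Hence $\overline{\supp\psi}\subseteq\supp\Gamma$ with no boundary analysis at all. One checks $\psi\ne1$ (via Hopficity: the word $\left[[c_0,b_0d_0b_0^{-1}],a_0\right]$ is nontrivial in the abstract group), and Lemma~\ref{l:recursive} with $H=\Gamma$ acting on $I$ finishes. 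The extra commutator with $a$ is exactly the device that replaces your uncontrolled $\overline{\supp b}$ by the open set $\supp a$; this is the missing idea in your outline.
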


Let us note two properties of RAAGs. First, a RAAG does not contain a subgroup isomorphic to $\bZ\wr\bZ$. The reason is that, every two--generator subgroup of a RAAG is either free or free abelian~\cite{Baudisch1981}; see also~\cite[Corollary 1.3]{KK2015GT}.
Second, a RAAG is \emph{Hopfian}; that is, every endomorphsm of a RAAG is an isomorphism. This follows from a general fact that every finitely generated residually finite group is Hopfian~\cite{LS2001}.

\bp[Proof of Theorem~\ref{t:tech-main} from Lemma~\ref{l:main}]
Assume $\form{a,b,t}\cong\bZ^2\ast\bZ$. 
Since the RAAG $\bZ^2\ast\bZ$ is Hopfian, the natural surjection between groups
\[ \form{A,B,T\mid [A,B]=1}\to \form{a,b,t}\]
is actually an isomorphism. It follows that
\[\form{a,b,tat^{-1},tbt^{-1}}\cong\form{A,B,TAT^{-1},TBT^{-1}}\cong\bZ^2\ast\bZ^2.\]
This contradicts Lemma~\ref{l:main}, since the four diffeomorphism $a,b,tat^{-1},tbt^{-1}$ satisfy the conditions of the lemma.
\ep

\bp[Proof of Lemma~\ref{l:main}]
We put $G=\form{a,b,c,d}$ and  consider an abstract group
\[ G_0 = \form{a_0,b_0,c_0,d_0\mid [a_0,b_0]=1=[c_0,d_0]}\cong \bZ^2\ast\bZ^2.\]
There is a natural surjection $p\co G_0\to G$ defined by \[(a_0,b_0,c_0,d_0)\mapsto(a,b,c,d).\] 

Assume for a contradiction that $G\cong G_0$. By the Hopficity of $\bZ^2\ast\bZ^2$,  we see that $p$ is an isomorphism. Since $G$ does not contain $\bZ\wr\bZ$,  Lemma~\ref{l:global} implies that $G$ has a global fixed point.
In other words, we may assume $M=I$.

Let us define 
$\phi=[c,bdb^{-1}]$ and $\psi=[\phi,a]$.
Lemma~\ref{l:comm-supp} implies that
\[\overline{\supp\psi}\sse\supp\phi\cup\supp a\cup\overline{\supp\phi\cap\supp a}.\]
We see from Lemma~\ref{lem:c1} that
\[\overline{\supp\phi\cap\supp a}
\sse\overline{\supp\phi\setminus\supp b}
\sse\supp c\cup \supp d.\]
So, it follows that
\[
\overline{\supp \psi}\sse\supp G.\]

As we are assuming $p$ is injective, we have 
\[\psi = [\phi,a]=\left[ [c, bdb^{-1}],a\right]\ne1.\]
Lemma~\ref{l:recursive} implies that $G$ contains $\bZ\wr\bZ$, which is a contradiction. This completes the proof.\ep

Let us conclude this section by describing one generalization of Theorem~\ref{t:tech-main}.

\begin{prop}\label{t:tech-main2}
Let $M\in\{I,S^1\}$.
If $a,b,c,d\in\Diff^1_+(M)$ satisfy 
\[\supp a\cap\supp b=\varnothing,\quad
\supp c\cap\supp d=\varnothing.\]
and 
\[
\left[[c,bdb^{-1}],a\right]\ne1,
\]
then $\form{a,b,c,d}$ contains the lamplighter group $\bZ\wr\bZ$.
\end{prop}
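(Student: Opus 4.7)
The plan is to follow the template of the proof of Lemma~\ref{l:main}, but with the nontriviality of $\psi:=[[c,bdb^{-1}],a]$ taken as a hypothesis rather than derived via Hopficity of $\bZ^2\ast\bZ^2$. The goal is to show that $\overline{\supp\psi}\subseteq\supp G$ where $G=\langle a,b,c,d\rangle$, and then to invoke Lemma~\ref{l:recursive} to produce the lamplighter subgroup.

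First, I would handle the circle case $M=S^1$ when $\supp G=S^1$. Since $\psi\ne 1$ forces each of $a,b,c,d$ to be nontrivial, the hypotheses of Lemma~\ref{l:global} are satisfied and that lemma immediately yields $\bZ\wr\bZ\le G$. Thus we may henceforth assume $\supp G\ne S^1$; picking a global fixed point, we may treat $G$ as acting on a compact interval and so reduce to the case $M=I$.

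Next, set $\phi=[c,bdb^{-1}]$. Applying Lemma~\ref{l:comm-supp} to $\psi=[\phi,a]$ yields
\[\overline{\supp\psi}\subseteq\supp\phi\cup\supp a\cup\overline{\supp\phi\cap\supp a},\]
and Lemma~\ref{lem:c1} gives $\overline{\supp\phi\cap\supp a}\subseteq\overline{\supp\phi\setminus\supp b}\subseteq\supp c\cup\supp d$. Combined with the elementary bound $\supp\phi\subseteq\supp b\cup\supp c\cup\supp d$ (any point fixed by $c$ and by $bdb^{-1}$ is fixed by their commutator, and $\supp(bdb^{-1})=b(\supp d)\subseteq\supp b\cup\supp d$), this gives $\overline{\supp\psi}\subseteq\supp G$.

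Finally, since $\psi\ne 1$ by hypothesis and $\overline{\supp\psi}$ is a compact subset of the open set $\supp G$, Lemma~\ref{l:recursive} applied with $g=\psi$ and $H=G$ produces a copy of $\bZ\wr\bZ$ inside $G$. There is no real obstacle: all the technically demanding inputs — the Two--jumps bookkeeping behind Lemma~\ref{lem:c1}, the chain-of-intervals reduction of Lemma~\ref{l:global}, and the recursive commutator construction of Lemma~\ref{l:recursive} — have already been established, so the argument amounts to assembling these ingredients in the order above.
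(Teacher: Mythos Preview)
Your proof is correct and follows precisely the route the paper intends: the proposition is stated without proof as a direct generalization of Lemma~\ref{l:main}, and your argument simply reruns that proof with $\psi\ne 1$ as a hypothesis rather than a consequence of Hopficity. The reduction from $S^1$ to $I$ via Lemma~\ref{l:global}, the support estimates through Lemmas~\ref{l:comm-supp} and~\ref{lem:c1}, and the final appeal to Lemma~\ref{l:recursive} are exactly the ingredients the paper assembles.
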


In particular, the group $\form{a,b,c,d}$ does not embed into a RAAG.

\section{Proof of Theorem~\ref{thm:main}}\label{s:main-thm}

In this section, we apply the facts we have gathered to complete the proof of the main result.

\subsection{Reducing to the connected case}
We will reduce the proof of Theorem~\ref{thm:main} to the case $M\in\{I,S^1\}$, using the following group theoretic observations.

\begin{lem}\label{lem:L1}
Suppose $A,B,C,D$ are groups, and suppose that $A\times B$ is a normal subgroup of $C*D$. Then at least one of these four groups is trivial.
\end{lem}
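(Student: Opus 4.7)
The plan is to analyze $N = A \times B$ via the action of $C*D$ on its Bass--Serre tree $T$, whose vertex stabilizers are the conjugates of $C$ and $D$ and whose edge stabilizers are trivial. Assuming all four groups are nontrivial, I would first rule out the possibility that $N$ fixes a vertex of $T$: if $N$ fixed some vertex $v$, then $N \leq \Stab(v)$, a conjugate of $C$ or $D$. Normality of $N$ would force $N$ into every conjugate of that stabilizer, and the malnormality of the free factors in $C*D$ (equivalent to triviality of edge stabilizers of $T$) would then give $N = 1$, contradicting the nontriviality of $A$ and $B$.

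Next, I would show that every nontrivial element of $A$ is hyperbolic on $T$. If some $\alpha \in A \setminus \{1\}$ were elliptic, then since edge stabilizers are trivial, $\alpha$ would fix a unique vertex $v$. Every $\beta \in B$ commutes with $\alpha$ and so would preserve $\Fix(\alpha) = \{v\}$, placing $B$ inside $\Stab(v)$. Choosing $\beta \in B \setminus \{1\}$ (now itself elliptic at $v$) and reversing the roles would yield $A \leq \Stab(v)$, so that $N \leq \Stab(v)$, contradicting the previous step. By symmetry, every nontrivial element of $B$ must be hyperbolic as well.

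Finally, I would pick $a \in A \setminus \{1\}$ and $b \in B \setminus \{1\}$; these commute and are both hyperbolic, so they share an axis $\ell$ in $T$. Every nontrivial element of $A$ commutes with $b$ and is therefore hyperbolic with the same axis $\ell$, and similarly for $B$. Thus $N$ acts on $\ell$ by simplicial translations, with trivial kernel since edge stabilizers of $T$ are trivial. This produces an embedding $N \hookrightarrow \bZ$; but any two nontrivial subgroups of $\bZ$ intersect nontrivially, so $\bZ$ contains no internal direct product of two nontrivial subgroups. Hence $A$ or $B$ must be trivial, giving the desired contradiction.

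The main subtlety will be the first step, where normality of $N$ combined with containment in a single vertex stabilizer forces $N = 1$. This is really the malnormality of the free factors in a free product, which follows either from the normal form theorem or from the observation that the pointwise stabilizer of any two distinct vertices in $T$ is contained in an edge stabilizer, hence is trivial. Everything after that is a straightforward coordinate computation on the shared axis.
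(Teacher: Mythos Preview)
Your argument is correct, and it is genuinely different from the paper's primary proof. The paper invokes the Kurosh Subgroup Theorem to decompose $A\times B$ as a free product $F\ast\bigast_i H_i$ with each $H_i$ conjugate into $C$ or $D$, then observes that a nontrivial direct product cannot be a nontrivial free product, forcing $A\times B$ into a single conjugate of a factor; normality then gives the contradiction exactly as in your first step. You instead work directly on the Bass--Serre tree, classifying elements of $N$ as elliptic or hyperbolic and playing the two factors $A$ and $B$ against each other to force a common axis and an embedding into $\bZ$. The paper in fact remarks, immediately after its proof, that an alternative argument via Bass--Serre theory exists---you have supplied one. The Kurosh route is shorter if one is willing to quote that theorem and the direct-product/free-product incompatibility as black boxes; your route is more self-contained and makes the geometry explicit, and in particular your final step (the embedding of $N$ into $\bZ$) gives a sharper structural conclusion than the paper needs.
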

\begin{proof}
If $A\times B$ is a subgroup of $C*D$ then the Kurosh Subgroup Theorem implies that there is a free product decomposition \[A\times B\cong F\ast\bigast_{i} H_i,\] where $F$ is a free group (possibly of infinite rank) and where each $H_i$ is conjugate into $C$ or into $D$. By analyzing centralizers of elements, it is easy to show that a nontrivial free product is never isomorphic to a nontrivial direct product (cf.~\cite[p.177]{LS2001}). It follows that $A\times B$ is conjugate into $C$ or $D$, which contradicts the normality of $A\times B$.
\end{proof}

An alternative proof of Lemma~\ref{lem:L1} can be given using Bass--Serre theory (see~\cite{Serre1977}).

\begin{lem}\label{lem:L2}
Suppose $A,B,C,D$ are nontrivial groups, and that $A*B\le C\times D$. Then there is an injective homomorphism from $A*B$ into either $C$ or $D$.
\end{lem}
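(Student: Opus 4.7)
The plan is to use Lemma~\ref{lem:L1} as a black box together with the two natural projections $\pi_C\co C\times D\to C$ and $\pi_D\co C\times D\to D$. Restricting these projections to $A*B$, we get homomorphisms $\pi_C|_{A*B}$ and $\pi_D|_{A*B}$; if either is injective we are done, so the whole argument is to derive a contradiction (in fact, to force triviality of $A$ or $B$, contradicting the hypothesis) when \emph{both} have nontrivial kernels.

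Set $K_C=\ker(\pi_C|_{A*B})$ and $K_D=\ker(\pi_D|_{A*B})$. Both are normal in $A*B$. The first key observation is that $K_C\cap K_D=1$: an element lying in both kernels projects to $(1,1)\in C\times D$, hence is trivial in $A*B$. The second key observation is that elements of $K_C$ commute with elements of $K_D$: for $x\in K_C$ and $y\in K_D$, normality gives $xyx^{-1}\in K_D$, so $[x,y]=(xyx^{-1})y^{-1}\in K_D$, and symmetrically $[x,y]\in K_C$, whence $[x,y]\in K_C\cap K_D=1$. Together these say that $K_C K_D\cong K_C\times K_D$, and being the product of two normal subgroups, this direct product is itself a normal subgroup of $A*B$.

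Now I apply Lemma~\ref{lem:L1} with the roles $(A,B,C,D)$ in that lemma played by $(K_C,K_D,A,B)$: the group $K_C\times K_D$ is a normal subgroup of $A*B$. The lemma forces one of $K_C, K_D, A, B$ to be trivial. Since $A$ and $B$ are nontrivial by hypothesis, the trivial group must be $K_C$ or $K_D$, meaning the corresponding projection restricts to an injective homomorphism from $A*B$ into $C$ or $D$, as desired.

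I do not anticipate any serious obstacle: Lemma~\ref{lem:L1} does the heavy lifting (via Kurosh), and the rest is the standard fact that two normal subgroups with trivial intersection generate an internal direct product. The only minor care needed is to verify that $K_C K_D$ is actually normal — which is immediate from each factor being normal — so that Lemma~\ref{lem:L1} genuinely applies.
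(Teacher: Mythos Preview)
Your proof is correct and is essentially identical to the paper's own argument: both define $K_C$ and $K_D$ as the kernels of the two projections, observe $K_C\cap K_D=1$ and that the two normal subgroups commute elementwise, conclude $K_CK_D\cong K_C\times K_D$ is normal in $A*B$, and invoke Lemma~\ref{lem:L1}. The only difference is cosmetic: the paper phrases the endgame as a direct contradiction (assuming both kernels nontrivial), while you phrase it as Lemma~\ref{lem:L1} forcing one of the four groups to be trivial and then ruling out $A$ and $B$.
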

\begin{proof}
Suppose the contrary, so that $K_C$ and $K_D$ are the (nontrivial) kernels of the inclusion of $A*B$ into $C\times D$ composed with the projections onto $C$ and $D$. Then $K_C\cap K_D=1$ and $K_C$ and $K_D$ normalize each other, so that $K_CK_D\cong K_C\times K_D\le A*B$. This contradicts Lemma~\ref{lem:L1}.
\end{proof}

\begin{lem}\label{lem:connected}
Suppose \[M=\coprod_{i=1}^n M_i\] is a compact one--manifold, and suppose that $A\ast B$ embeds into $\Diffb(M)$. Then 
 for some finite index subgroups $A_0\le A$ and $B_0\le B$,
  and for some $i$, we have an embedding of $A_0\ast B_0$ into $\Diffb(M_i)$.
\end{lem}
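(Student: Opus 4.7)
The plan is to use the natural permutation action of diffeomorphisms on the components of $M$ to reduce to the kernel of this action, which embeds into a product of diffeomorphism groups of the individual components, and then to invoke Lemma~\ref{lem:L2} repeatedly to split off the product structure.

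First, I would consider the homomorphism $\pi\co\Diffb(M)\to\Sym(\pi_0(M))\cong S_n$ arising from the action of orientation-preserving diffeomorphisms on the connected components of $M$. Composing the given embedding $A\ast B\hookrightarrow\Diffb(M)$ with $\pi$ yields a homomorphism to $S_n$ whose image is finite, so whose kernel $K$ has finite index in $A\ast B$. Every element of $K$ setwise fixes each component $M_i$, so restriction defines an injective homomorphism
\[
K\hookrightarrow\prod_{i=1}^{n}\Diffb(M_i).
\]

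Second, I would set $A_0=A\cap K$ and $B_0=B\cap K$. Since $K$ has finite index in $A\ast B$, the subgroups $A_0$ and $B_0$ have finite index in $A$ and $B$ respectively. Because $A_0$ and $B_0$ live in distinct free factors, the normal form theorem for free products identifies $\langle A_0,B_0\rangle\le K$ with the free product $A_0\ast B_0$. We therefore obtain an embedding
\[
A_0\ast B_0\hookrightarrow\prod_{i=1}^{n}\Diffb(M_i).
\]

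Finally, I would apply Lemma~\ref{lem:L2} inductively to factor the product one coordinate at a time. Writing the codomain as $\Diffb(M_1)\times\prod_{i\ge 2}\Diffb(M_i)$, Lemma~\ref{lem:L2} (assuming $A_0$ and $B_0$ are nontrivial) forces $A_0\ast B_0$ to embed into one of the two factors. Iterating at most $n-1$ times yields an embedding of $A_0\ast B_0$ into a single $\Diffb(M_i)$, as desired. The only delicate point is ensuring that $A_0$ and $B_0$ remain nontrivial so that Lemma~\ref{lem:L2} applies; this holds whenever $A$ and $B$ are infinite, since then their finite-index subgroups $A_0, B_0$ are also infinite, and the degenerate case where $A$ or $B$ is finite can be dispatched separately by taking $A_0=A$ or $B_0=B$ and directly using the projection onto a coordinate where the (now smaller) free product acts faithfully.
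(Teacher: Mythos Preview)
Your proposal is correct and follows essentially the same approach as the paper: the paper's proof simply asserts that $\Diffb(M)$ is commensurable with $\prod_{i=1}^n\Diffb(M_i)$ and then invokes Lemma~\ref{lem:L2}, while you spell out this commensurability explicitly via the permutation action on $\pi_0(M)$ and the passage to $A_0=A\cap K$, $B_0=B\cap K$. Your treatment of the degenerate case where $A$ or $B$ is finite is a bit vague, but the paper's proof is equally terse on this point, and the only application in the paper (with $A=G\times\Z$ and $B=\Z$) involves infinite factors anyway.
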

\begin{proof}
This follows immediately from Lemma~\ref{lem:L2}, using the fact that $\Diffb(M)$ is commensurable with \[\prod_{i=1}^n\Diffb(M_i).\]

Note that passage to  finite index subgroups is necessary, since $M$ may consist of a union of diffeomorphic manifolds which are permuted by the action of $A\ast B$.
\end{proof}

\subsection{Taming supports}

Let us denote the center of a group $G$ by $Z_G$.
If $M$ is a one--manifold and if $s\in\Diffb(M)$, then
we denote by $Z(s)$ the centralizer of $s$ in $\Diffb(M)$.
The following lemma is crucial for applying Lemma~\ref{l:main}:

\begin{lem}\label{lem:Z2}
Assume one of the following:
\be[(i)]
\item
$M=I$ and $G$ is a nonabelian group such that $Z_G\ne1$.
\item
$M=S^1$ and $G$ is a non-metabelian group such that $\bZ\le Z_G$.
\item
$M=S^1$ and $G$ is a finitely generated group such that $\bZ\le Z_G$
and such that $G$ is not abelian-by-finite cyclic.
\ee
In each of the cases, if $G\le\Diffb(M)$, then there is a subgroup $\bZ^2\le G$ generated by diffeomorphisms $a$ and $b$ such that $\supp a\cap\supp b=\varnothing$.
\end{lem}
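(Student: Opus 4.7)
My plan is to find, in all three cases, a central element $a \in Z_G$ of infinite order together with a nontrivial element $b$ in the commutator subgroup of a suitable ``grounded'' subgroup of $G$; these will commute (since $a$ is central) and have disjoint supports, by Lemma~\ref{l:interval} or Lemma~\ref{l:circle}(\ref{p:zz}). Once these are in hand, $\form{a,b} \cong \bZ^2$ follows at once, since any nontrivial relation $a^m b^n = 1$ with $a,b$ of infinite order and disjoint supports would force $\supp a^m = \supp b^{-n} = \varnothing$, contradicting infinite order.

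For case (i), take any nontrivial $a \in Z_G$. Since an orientation-preserving homeomorphism of $I$ with finite order must be trivial, $a$ has infinite order. As $G \leq Z(a)$ is non-abelian, $[G, G] \neq 1$, and Lemma~\ref{l:interval} gives $\supp a \cap \supp[Z(a), Z(a)] = \varnothing$, so any nontrivial $b \in [G, G]$ works, and $b$ automatically has infinite order as an element of $\Diffb(I)$.

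For cases (ii) and (iii), I would choose a generator $a$ of $\bZ \leq Z_G$. An irrational rotation number for $a$ would make $Z(a)$, and hence $G$, abelian by Lemma~\ref{l:circle}(\ref{p:rot-irr}), contradicting either the non-metabelian hypothesis or the not-abelian-by-finite-cyclic hypothesis (abelian groups being vacuously abelian-by-finite-cyclic). Hence $\rot a \in \bQ$, and after replacing $a$ by a suitable nonzero power I may assume $a$ is grounded, still central, and of infinite order. By Lemma~\ref{l:circle}(\ref{p:hom}), $\rot$ restricts to a homomorphism on $G \leq Z(a)$; let $G_0$ denote its kernel, which is generated by grounded elements, so Lemma~\ref{l:circle}(\ref{p:zz}) yields $\supp a \cap \supp[G_0, G_0] = \varnothing$. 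Any nontrivial $b \in [G_0, G_0]$ is grounded and hence of infinite order (cut $S^1$ at a fixed point of $b$ to reduce to the interval case), so it remains only to show $[G_0, G_0] \neq 1$. In case (ii) this is easy: $[G, G] \leq G_0$ since $\rot(G)$ is abelian, so $[[G, G], [G, G]] \leq [G_0, G_0]$, and the non-metabelian hypothesis forces the former to be nontrivial.

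The main obstacle is case (iii), where the hypothesis is weaker and requires a careful identification of $G/G_0$ to rule out abelian-by-finite-cyclic. I would invoke Lemma~\ref{l:circle}(\ref{p:rot-q}) to conclude $\rot(G) \subseteq \bQ/\bZ$; since $G$ is finitely generated, $G/G_0 \cong \rot(G)$ is a finitely generated subgroup of the torsion group $\bQ/\bZ$. But every finitely generated subgroup of $\bQ/\bZ$ is finite (finitely generated torsion abelian), and every finite subgroup of $\bQ/\bZ$ is cyclic. Hence $G/G_0$ is finite cyclic, and if $[G_0, G_0]$ were trivial then $G$ would be abelian-by-finite-cyclic, contradicting the hypothesis. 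Once this structural identification is in hand, the construction of $a$ and $b$ proceeds exactly as in case (ii).
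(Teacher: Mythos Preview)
Your proposal is correct and follows essentially the same approach as the paper's proof: in each case you locate a grounded infinite-order central element and a nontrivial element in the derived subgroup of a suitable grounded subgroup, then invoke Lemma~\ref{l:interval} or Lemma~\ref{l:circle}(\ref{p:zz}) to separate supports. The only cosmetic difference is that in case~(ii) the paper applies Lemma~\ref{l:circle}(\ref{p:zz}) directly to $H=[G,G]$ (whose elements are grounded by part~(\ref{p:hom})) and then takes $b\in G''$, whereas you route both (ii) and (iii) uniformly through $G_0=\ker(\rot\restriction_G)$ and observe $G''\le [G_0,G_0]$; this is a harmless reorganization and arguably tidier.
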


\bp

\emph{Case (i).} Let us pick $s\in Z_G\setminus1$ and $b\in[G,G]\setminus1$. 
Since $G\le Z(s)$,  Lemma~\ref{l:interval} implies that 
\[\supp s\cap \supp b=\varnothing.\]
Since $\Homeo^+(I)$ is torsion-free, we have $\form{b,s}\cong\bZ^2$ as desired.

\emph{Case (ii).}
We are given with some $s\in Z_G$ such that $\form{s}\cong\bZ$. As $G$ is nonabelian, Lemma~\ref{l:circle} (\ref{p:rot-irr}) implies that $\rot(s)\in\bQ$; in particular, $s^n$ is grounded for some $n\ge1$.
From part (\ref{p:hom}) of the same lemma
and from that $G\le Z(s)$, we see 
every element of $[G,G]$ is grounded.
Since $[G,G]\le Z(s^n)$, we note from Lemma~\ref{l:circle} (\ref{p:zz}) that 
\[\supp(s^n)\cap \supp G''=\varnothing.\]
From the metabelian hypothesis, we can find $b\in G''\setminus1$.
As $s^n$ and $b$ are grounded, they have infinite orders. It follows that $\form{s^n,b}\cong\bZ^2$.

\emph{Case (iii).}
Let us proceed similarly to the case (ii). Namely, pick $s\in Z_G$ such that $\form{s}\cong\bZ$. 
By Lemma~\ref{l:circle}, we have a homomorphism
\[
\rot\restriction_G\co G\to \bQ.\]
We fix $n\ge1$ such that $s^n$ is grounded.
As $G$ is finitely generated, we see $\rot(G)$ is finite cyclic.
The hypothesis implies that $G_0=\ker(\rot\restriction_G)$ is not abelian.
Since every element of $G_0$ is grounded, we can apply
 Lemma~\ref{l:circle} (\ref{p:zz}) and deduce 
\[\supp s^n\cap \supp [G_0,G_0]=\varnothing.\]
Each $b\in [G_0,G_0]\setminus1$ then yields the desired subgroup
$\form{b,s^n}\cong\bZ^2$.
\ep

\subsection{The main result}


\begin{proof}[Proof of Theorem~\ref{thm:main}]
Suppose $(G\times\Z)*\Z\le \Diffb(M)$ for some compact one--manifold $M$. Replacing $G$ by a finite index subgroup if necessary, we may assume that $M$ is connected, by Lemma~\ref{lem:connected}. 
By applying the cases (i) and (ii) of Lemma~\ref{lem:Z2} to the group $G\times \bZ$,
we can find a subgroup $\langle a,b\rangle\cong\bZ^2\le G\times\Z$ such that $\supp a\cap\supp b=\varnothing$.
If we write the $\bZ$--free factor of $(G\times \bZ)\ast\bZ$ as $\form{t}$, then
\[\form{a,b,t}\cong\form{a,b}\ast\form{t}\cong\bZ^2\ast\bZ.\]
This contradicts Theorem~\ref{t:tech-main}.
\end{proof}

One can now deduce Corollary~\ref{cor:fg1} as well as Corollary~\ref{cor:cnt} below from Lemma~\ref{lem:Z2}, in the exact same fashion as Theorem~\ref{thm:main}. 
\begin{cor}\label{cor:cnt}
Let $G$ be a group.
\be
\item 
If $G$ is nonabelian and if the center of $G$ is nontrivial,
then
$G*\Z$ admits no faithful $\Cb$ action on $I$. 
\item Suppose $G$ is finitely generated.
If $G$ is not abelian-by-finite cyclic
and if the center of $G$ contains a copy of $\bZ$,
then 
$G*\Z$ admits no faithful $\Cb$ action on $S^1$. 
\end{enumerate}
\end{cor}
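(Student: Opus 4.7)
My plan is to follow the proof of Theorem~\ref{thm:main} almost verbatim; since both parts of Corollary~\ref{cor:cnt} already assume the target manifold lies in $\{I,S^1\}$, the reduction-to-connected step via Lemma~\ref{lem:connected} is unnecessary, and what remains is the straightforward combination of Lemma~\ref{lem:Z2} with Theorem~\ref{t:tech-main}. So I will argue by contradiction: suppose there is a faithful action $G\ast\bZ\hookrightarrow\Diffb(M)$ for the relevant $M$, and identify $G\ast\bZ$ with its image inside $\Diffb(M)$. Let $t\in\Diffb(M)$ denote a generator of the $\bZ$ free factor, and view $G\le\Diffb(M)$.

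For part~(1), I apply Lemma~\ref{lem:Z2}(i) to the subgroup $G\le\Diffb(I)$, using the hypotheses that $G$ is nonabelian and that $Z_G\ne 1$, to extract a pair of commuting elements $a,b\in G$ with $\form{a,b}\cong\bZ^2$ and $\supp a\cap\supp b=\varnothing$. For part~(2), I instead invoke Lemma~\ref{lem:Z2}(iii), whose hypotheses ($G$ finitely generated, $\bZ\le Z_G$, and $G$ not abelian-by-finite cyclic) are precisely the standing assumptions, to produce a pair $a,b\in G\le\Diffb(S^1)$ with the same two properties.

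In either case, since $\form{a,b}\le G$ and $\form{t}$ sit in the distinct free factors of $G\ast\bZ$, the normal form theorem for free products (or Kurosh) immediately gives $\form{a,b,t}\cong\bZ^2\ast\bZ$ as a subgroup of $\Diff^1_+(M)$. This contradicts Theorem~\ref{t:tech-main}, since $a$ and $b$ have disjoint supports. The bulk of the work has already been done in establishing Lemma~\ref{lem:Z2} and Theorem~\ref{t:tech-main}, so I do not anticipate any new obstacle; the only point requiring even nominal care is the verification that the two free factors of $G\ast\bZ$ remain detected inside the triple $\form{a,b,t}$, which is automatic because $a,b$ and $t$ live on opposite sides of the free product decomposition. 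The corollary is thus essentially a repackaging of the two deeper results of the paper.
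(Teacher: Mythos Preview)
Your proposal is correct and matches the paper's own approach: the paper explicitly states that Corollary~\ref{cor:cnt} is deduced from Lemma~\ref{lem:Z2} ``in the exact same fashion as Theorem~\ref{thm:main},'' and your argument is precisely that deduction, using cases (i) and (iii) of Lemma~\ref{lem:Z2} for parts (1) and (2) respectively, and then invoking Theorem~\ref{t:tech-main} to derive the contradiction. Your observation that the reduction to the connected case is unnecessary here is also correct, since the manifold is already specified as $I$ or $S^1$.
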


Here, a group $G$ is $\mathcal{X}$--by--$\mathcal{Y}$ for group theoretic properties $\mathcal{X}$ and $\mathcal{Y}$ if there is an exact sequence \[1\to K\to G\to Q\to 1\] such that $K$ has property $\mathcal{X}$ and $Q$ has property $\mathcal{Y}$. We allow both $K$ and $Q$ to be trivial.

Note that $G\times \bZ$ often occurs as a subgroup of $\Diffb(M)$, where $G$ is not virtually metabelian. We have the following immediate consequence:

\begin{cor}\label{cor:not closed}
Let $\mathcal{G}$ denote the class of finitely generated subgroups of $\Diffb(M)$. The class $\mathcal{G}$ is not closed under taking finite free products.
\end{cor}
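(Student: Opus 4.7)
The plan is to exhibit explicit finitely generated subgroups $G, H \le \Diffb(M)$ whose free product $G \ast H$ fails to embed into $\Diffb(M)$. I would take $H = \langle t \rangle \cong \Z$, generated by any infinite-order element $t \in \Diffb(M)$, and $G = F_2 \times \Z$, where $F_2$ denotes the free group of rank $2$.

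First I would verify that $G \in \mathcal{G}$ by identifying $G$ with the right-angled Artin group $A(P_3)$, where $P_3$ is the path on three vertices: the two non-adjacent endpoints generate the $F_2$ factor, and the central vertex generates the $\Z$ factor. In the cograph hierarchy, $P_3$ is the join of $K_1 \sqcup K_1 \in \KK_2$ with a single vertex, and therefore lies in $\KK_3$. By Corollary~\ref{cor:classification}(2), we obtain an embedding $G = A(P_3) \hookrightarrow \Diff_+^\infty(M) \subseteq \Diffb(M)$, so $G \in \mathcal{G}$. The membership $H \in \mathcal{G}$ is trivial.

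Next I would apply Theorem~\ref{thm:main} with $F_2$ in place of the group named $G$ in that statement: the group $F_2$ contains nonabelian free subgroups, so it is not virtually solvable and in particular not virtually metabelian. Therefore $(F_2 \times \Z) \ast \Z = G \ast H$ admits no faithful $\Cb$ action on $M$; equivalently, this is an immediate application of Corollary~\ref{cor:FtimesZ}. Hence $G \ast H \notin \mathcal{G}$, establishing that $\mathcal{G}$ is not closed under finite free products.

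There is no substantive obstacle here; the argument is pure assembly. All of the heavy lifting is in Theorem~\ref{thm:main} and in the positive direction of Corollary~\ref{cor:classification}(2). The only remaining tasks are the routine bookkeeping that places $P_3$ in $\KK_3 \setminus \KK_2$ and the elementary observation that $F_2$ fails to be virtually metabelian.
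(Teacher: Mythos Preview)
Your proposal is correct and matches the paper's approach: the paper simply notes, just before stating the corollary, that $G\times\Z$ embeds in $\Diffb(M)$ for suitable non--virtually--metabelian $G$, making the result an immediate consequence of Theorem~\ref{thm:main}. You have spelled this out with the concrete choice $G=F_2$ and the embedding $F_2\times\Z\cong A(P_3)\hookrightarrow\Diff_+^\infty(M)$, which is exactly the content of Corollary~\ref{cor:FtimesZ} together with (the elementary positive direction of) Corollary~\ref{cor:classification}(2).
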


\section{Smooth right-angled Artin group actions on compact one--manifolds}

In this and the remaining sections, we deduce several corollaries from Theorem \ref{thm:main}. We first complete the classification of right-angled Artin groups which admit faithful $C^{\infty}$ actions on a compact one--manifold (Corollary \ref{cor:classification}).

\begin{lem}\label{lem:dichotomy}
Let $A(\gam)$ be a right-angled Artin group. Then one of the following mutually exclusive conclusions holds:
\begin{enumerate}
\item
We have $(F_2\times\Z)*\Z\le A(\gam)$;
\item
The graph $\gam$ lies in $\mathcal{K}_3$.
\end{enumerate}
\end{lem}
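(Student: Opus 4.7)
The plan is to show that every defining graph $\gam\notin\KK_3$ contains an induced subgraph whose right-angled Artin group already contains $(F_2\times\bZ)*\bZ$. I will use two possible target induced subgraphs: the path $P_4$ on four vertices, and the graph $P_3\sqcup\{v\}$ consisting of a path on three vertices together with one disjoint isolated vertex. For the first, the introduction records the explicit embedding $(F_2\times\bZ)*\bZ=\form{a,b,c,dad^{-1}}\le A(P_4)$. For the second, the middle vertex of $P_3$ is central in $A(P_3)$, so $A(P_3)\cong F_2\times\bZ$, and disjoint union corresponds to free product, giving $A(P_3\sqcup\{v\})\cong(F_2\times\bZ)*\bZ$ on the nose. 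Either outcome produces $(F_2\times\bZ)*\bZ\le A(\gam)$ via the canonical inclusion of a full-subgraph RAAG.

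If $\gam\notin\KK$, then by definition $\gam$ contains an induced $P_4$, and we are done. So from now on I may assume $\gam$ is a cograph but $\gam\notin\KK_3$. I would use the cotree: $\gam$ is either disconnected and equal to the disjoint union of its components, or connected with more than one vertex and thus a nontrivial join $\gam=\Lambda_1*\cdots*\Lambda_k$ with each $\Lambda_i$ either a single vertex or a disconnected cograph.

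In the disconnected case, some component $\gam_0$ must fail to be a complete graph, for otherwise $\gam\in\KK_2\subseteq\KK_3$. Picking two non-adjacent vertices $u,v\in\gam_0$ and a shortest $u$-$v$ path $x_0,x_1,\ldots,x_\ell$ in $\gam_0$, we have $\ell\ge 2$; moreover $\ell\ge 3$ would make $\{x_0,x_1,x_2,x_3\}$ an induced $P_4$ in $\gam$, since on a shortest path non-consecutive vertices are non-adjacent, contradicting $\gam\in\KK$. Hence $\ell=2$, the triple $\{x_0,x_1,x_2\}$ induces a $P_3$ in $\gam_0$, and any vertex $w$ from a different component yields an induced $P_3\sqcup\{w\}$ in $\gam$. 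In the connected case, some join factor $\Lambda_i$ must fail to lie in $\KK_2$, for otherwise $\gam$ itself would lie in $\KK_3$. Since $\Lambda_i$ is either a vertex (hence in $\KK_0\subseteq\KK_2$) or a disconnected cograph, the latter must occur, and the disconnected argument applied to $\Lambda_i$ yields an induced $P_3\sqcup\{v\}$ inside $\Lambda_i$, hence inside $\gam$.

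For mutual exclusivity I would check separately that $\gam\in\KK_3$ rules out $(F_2\times\bZ)*\bZ\le A(\gam)$: writing $A(\gam)\cong\prod_i G_i$ with each $G_i$ a free product of free abelian groups, a single application of Lemma~\ref{lem:L2} would embed $(F_2\times\bZ)*\bZ$ into some $G_i$, which lies in $\KK_2$; by the Kurosh subgroup theorem any subgroup of $G_i$ isomorphic to $F_2\times\bZ$ would, because it has nontrivial center while a nontrivial free product has trivial center, have to conjugate into an abelian free factor of $G_i$, which is absurd. The main (mild) obstacle throughout is identifying the right four-vertex target $P_3\sqcup\{v\}$ and ruling out longer induced paths in the cograph case; the shortest-path trick disposes of this cleanly.
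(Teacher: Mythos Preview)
Your proposal is correct and follows essentially the same strategy as the paper: both arguments locate an induced $P_3\sqcup\{v\}$ (or $P_4$ in the non-cograph case) inside $\gam$ whenever $\gam\notin\KK_3$, and both handle mutual exclusivity via Lemma~\ref{lem:L2} together with a Kurosh/center argument inside a free product of free abelian groups. The only differences are organizational: you work directly with one level of the cotree decomposition and the shortest-path trick, whereas the paper inducts along the $\KK_n$ hierarchy and simply asserts that a non-complete connected graph contains an induced $P_3$; your Kurosh step is also spelled out more explicitly than the paper's one-line assertion.
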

\begin{proof}
Let us consider a stratification of graph classes:
\[ \KK_2\sse\KK_3\sse\KK.\]

Suppose $\gam\in\KK_2$. Then $A(\gam)$ is the free product of free abelian groups, and hence contains no copy of $(F_2\times\Z)*\Z$.

Let $\gam\in\KK_3\setminus\KK_2$. Then $\gam$ is the join of at least two graphs $\gam_1,\gam_2$ in $\KK_2$. 
We write
\[
A(\gam)=A(\gam_1)\times A(\gam_2).\]
If $A(\gam)$ contains a copy of $(F_2\times\bZ)\ast\bZ$,
then so does $A(\gam_1)$ or $A(\gam_2)$
by Lemma~\ref{lem:L2}; this would contradict the previous paragraph.

Assume $\gam\in\KK\setminus\KK_3$.
First consider the case that $\gam\in\KK_{2i}\setminus\KK_{2i-1}$ for some $i\ge2$. 
We can write
\[
\gam=\coprod_{j=1}^k \gam_j\]
for some $k\ge2$ and for some nonempty connected graphs $\gam_j\in\KK_{2i-1}$. 
These graphs $\Gamma_j$ cannot all be complete graphs, for otherwise $\gam\in\KK_2$. 
So at least one graph $\Gamma_j$ contains $P_3$, the path on three vertices, as a full subgraph. 
This implies that $A(\gam)$ contains a copy of $(F_2\times\bZ)\ast\bZ$.

We then consider the case that $\gam\in\KK_{2i+1}\setminus\KK_{2i}$ for some $i\ge2$. 
Note $\gam$ is the join of some graphs $\gam_1,\ldots,\gam_k$ in $\KK_{2i}$. 
By the previous graph, each $A(\gam_j)$ contains $(F_2\times\bZ)\ast\bZ$.

Finally assume $\gam\notin\KK$, so that $\gam$ is not a cograph. 
Then we have that $P_4$ is a full subgraph of $\gam$, so that $A(P_4)\le A(\gam)$. The group $A(P_4)$ contains every right-angled Artin group $A(F)$, where $F$ is a finite forest (see~\cite{KK2013}). Since the defining graph of $(F_2\times\Z)*\Z$ is a copy of a path $P_3$ on three vertices together with an isolated vertex, its defining graph is a finite forest. We see that $(F_2\times\Z)*\Z\le A(\gam)$.
\end{proof}

We complete the proof of Corollary \ref{cor:classification} with the following proposition:

\begin{prop}\label{prop:cinfty}
Let $\gam\in\mathcal{K}_3$ and let $M$ be a compact one--manifold. Then there is an embedding of $A(\gam)$ into $\Diff_+^{\infty}(M)$.
\end{prop}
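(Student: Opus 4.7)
First, I reduce to the case that $M$ is connected. If $M=\coprod_{i=1}^n M_i$, then any faithful embedding $A(\gamma)\hookrightarrow \Diff_+^\infty(M_1)$ extends by the identity on $M_2,\dots,M_n$ to a faithful embedding into $\Diff_+^\infty(M)$. So without loss of generality $M\in\{I,S^1\}$.

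Next, I use the structure of $\mathcal{K}_3$: by definition, $\gamma\in\mathcal{K}_3$ can be written as a join $\gamma=\gamma_1\ast\cdots\ast\gamma_k$ with each $\gamma_i\in\mathcal{K}_2$, so that
\[
A(\gamma)\cong \prod_{i=1}^k A(\gamma_i),
\]
where each $A(\gamma_i)$ is a free product of finitely many free abelian groups. The plan is now to partition the interior of $M$ into $k$ pairwise disjoint closed subintervals $J_1,\dots,J_k$ with disjoint interiors, and for each $i$ to produce a faithful $C^\infty$ action of $A(\gamma_i)$ whose support is contained in the interior of $J_i$. The product action then realizes the full direct product $A(\gamma)$ faithfully inside $\Diff_+^\infty(M)$, because diffeomorphisms with disjoint supports commute and act nontrivially only on their own factor.

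The main task, therefore, is the following key claim: every free product of finitely many free abelian groups admits a faithful $C^\infty$ action on $I$ with support compactly contained in $(0,1)$. I would prove this by induction on the number of free factors. The base case $\bZ^n\hookrightarrow \Diff_+^\infty(I)$ with compact support is easy: take $n$ commuting $C^\infty$ vector fields compactly supported in $(0,1)$ (obtained, for instance, from a single bump function scaled by linearly independent constants) and exponentiate. The inductive step amounts to showing that if $G_1,G_2$ admit faithful $C^\infty$ actions compactly supported in the interior of $I$, then so does $G_1\ast G_2$. For this I would use a smooth ping-pong construction: choose disjoint closed subintervals $K_1,K_2\subset (0,1)$, realize $G_i$ by $C^\infty$ diffeomorphisms compactly supported in a small neighborhood $U_i$ of $K_i$, and arrange the action so that every nontrivial element of $G_1$ maps $K_2$ into $K_1$ and vice versa. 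Standard ping-pong then certifies that $\langle G_1,G_2\rangle\cong G_1\ast G_2$.

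The step I expect to be the main obstacle is producing the smooth ping-pong action for the free product in the inductive step while keeping the support compact. The subtlety is that we need elements of $G_1$ to throw the entire $G_2$-active region into the $G_1$-active region, which requires the ambient diffeomorphisms to be defined on all of $I$ and yet to have compact support; this can be arranged by first conjugating the given compactly supported actions into very small subintervals (using a smooth diffeomorphism of $I$ that shrinks neighborhoods of $K_1$ and $K_2$ appropriately), and then choosing the nested regions $K_i\subset U_i$ so that the ping-pong conditions are met. Once that is achieved, the inductive construction produces faithful compactly supported $C^\infty$ actions of every $A(\gamma_i)$, and the disjoint-support assembly described above completes the proof.
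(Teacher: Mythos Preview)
Your reduction to the connected case and your assembly of the direct product via disjoint subintervals match the paper exactly, so the only substantive step is the $\mathcal{K}_2$ case: embedding a free product of free abelian groups into $\Diff_+^\infty(I)$.

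Here your ping-pong scheme has a genuine gap. The free-product ping-pong lemma requires that \emph{every} nontrivial element of $G_1$ send $K_2$ into $K_1$. But with your base-case construction of $\bZ^n$ --- time-one maps of rescalings $c_1X,\dots,c_nX$ of a single compactly supported vector field $X$ --- the element $a_1^{m_1}\cdots a_n^{m_n}$ is the time-$(m_1c_1+\cdots+m_nc_n)$ map of the flow of $X$. Since the $c_i$ are $\bQ$-linearly independent reals, these times are dense in $\bR$, so there are nontrivial elements of $\bZ^n$ arbitrarily $C^0$-close to the identity; such elements cannot move $K_2$ off itself, let alone into $K_1$. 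More generally, any faithful $\bZ^n$-action on an interval with $n\ge2$ and common support an interval is (after conjugation on that interval) by translations of $\bR$, and the same density obstruction applies. There is also a smaller inconsistency: you simultaneously ask that $G_1$ be supported in a small neighborhood $U_1$ of $K_1$ and that it move the disjoint set $K_2$; these are incompatible.

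The paper avoids ping-pong altogether. It first observes that $\bZ^{n_1}*\cdots*\bZ^{n_k}$ embeds in $\bZ*\bZ^N$ with $N=\max_i n_i$, so it suffices to realize $\bZ*\bZ^N$. It then takes $\bZ^N$ inside a one-parameter flow whose support is \emph{all} of $(0,1)$ (not compactly supported), and appeals to a Baire-category argument from~\cite{KKMj2016}: a generic $\psi\in\Diff_0^\infty(I)$ together with this $\bZ^N$ generates $\bZ*\bZ^N$. The infinite tangency to the identity at $\partial I$ (rather than compact support) is what lets the pieces be glued for the $\mathcal{K}_3$ step. If you want to repair your approach, the cleanest fix is to adopt this reduction to $\bZ*\bZ^N$ and the genericity argument, rather than trying to run ping-pong with higher-rank abelian factors.
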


In the case when $M=S^1$, we will prove more precise facts in Section \ref{sec:semiconj}.

\begin{proof}[Proof of Proposition~\ref{prop:cinfty}]
Let $\Diff^{\infty}_0(I)$ denote the group of $C^{\infty}$ diffeomorphisms of the interval which are infinitely tangent to the identity at $\{0,1\}$. It suffices to find a copy $A(\gam)\le\Diff^{\infty}_0(I)$, since $I$ is a submanifold of every compact one--manifold $M$ and every element of $\Diff^{\infty}_0(I)$ can, by definition, be extended to all of $M$ by the identity map. 

In the case when $\gam\in\mathcal{K}_2$, we can write 
$A(\gam)=\Z^{n_1}\ast\cdots\ast\Z^{n_k}$.
As we have an embedding $A(\gam)\hookrightarrow\Z*\Z^N$ for $N=\max_i n_i$, it suffices to prove the proposition for $\Z*\Z^N$ in this case. 
To do this, we first find a copy of $\Z^N\le \Diff^{\infty}_0(I)$ such that the support of each nontrivial element of $\Z^N$ is all of $(0,1)$. The existence of such a copy of $\Z^N$ follows from choosing a $C^{\infty}$ vector field on $I$ which vanishes only at $\partial I$ and integrating it to get a flow, which gives an $\R$--worth of commuting elements of $\Diff^{\infty}_0(I)$.
Now, choosing a generic (in the sense of Baire) element $\psi$ of $\Diff^{\infty}_0(I)$, we have that $\psi$ and this copy of $\Z^N$ generate a copy of $\Z*\Z^N\le\Diff^{\infty}_0(I)$ (cf.~\cite{KKMj2016}).



Finally, if $\gam\in\mathcal{K}_3\setminus\mathcal{K}_2$ then $A(\gam)$ is a finite direct product of $k$ right-angled Artin groups with defining graphs in $\mathcal{K}_2$. Write again $N$ for the maximal rank of an abelian subgroup of $A(\gam)$. We choose a finite collection of disjoint intervals $\{J_1,\ldots,J_k\}$ with nonempty interior inside of $I$, and realize a copy of $\Z*\Z^N$ on each $J_i$, extending by the identity outside of $J_i$. It is clear that $A(\gam)$ is thus realized as a subgroup of $\Diff^{\infty}_0(I)$.
\end{proof}

\section{Lower regularity}

In this section, we prove Proposition \ref{prop:free prod}. 
Recall a \emph{left order} on a group $G$ is a total order $\prec$ on $G$ such that for all triples $a,b,g\in G$ we have $a\prec b$ if and only if $ga\prec gb$. 
A group is \emph{left orderable} if it admits a left order.

Every subgroup of $\Homeo^+(\bR)$ is left orderable. Conversely, if $G$ is countable and left orderable,  then there is a faithful action $G\to\Homeo^+(\bR)$;
it can be further required from the action that for some fixed point $x_0\in\bR$,
whenever $g\prec h$ we have $g(x_0)<h(x_0)$. There exists a standard example of such an action, called a \emph{dynamical realization} of the given left order~\cite{Navas2011}.


Note that if $\prec$ is a left order on a group $G$, then $G$ also admits the opposite order $\prec^{opp}$, where $g\prec h$ if and only if $h\prec^{opp} g$.  It follows that if $g$ is a nontrivial element of a countable left orderable group $G$, then there exists a faithful action of $G$ on $\R$ such that $x_0<g(x_0)$ for some $x_0\in \R$. We will call an action coming from the opposite order as an \emph{opposite action}.

Let us now establish Proposition \ref{prop:free prod} for the case $M=I$:

\begin{prop}\label{prop:homeo closed}
Let $\mathcal{G}$ denote the class of countable subgroups of $\Homeo^+(I)$. Then $\mathcal{G}$ is closed under countable free products.
\end{prop}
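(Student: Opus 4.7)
The plan is to exploit the equivalence between countable left-orderability and embeddability into $\Homeo^+(I)$---essentially set up in the excerpt via dynamical realizations---reducing Proposition~\ref{prop:homeo closed} to the statement that countable free products of countable left-orderable groups are again left-orderable. This latter statement is a classical theorem, and I would reprove it here using the dynamical realization and opposite-action language just introduced, since the resulting order can then be promoted back to an action on $I$ by a second dynamical realization.

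First I would observe the dictionary: every countable subgroup of $\Homeo^+(\bR)$ is left-orderable, and conversely a dynamical realization of a countable left-orderable group, once conjugated into the interior of $I$ and extended by the identity on $\partial I$, yields an embedding into $\Homeo^+(I)$. The proposition is thus equivalent to left-orderability of $F := *_i G_i$. To order $F$, I would fix left orders $\prec_i$ on each $G_i$ with dynamical realizations $\rho_i \colon G_i \to \Homeo^+(\bR)$ and their opposite actions $\rho_i^{opp}$, normalized to share a basepoint $x_0$. For each reduced word $w = a_1 \cdots a_n \in F$ (with $a_k \in G_{i_k} \setminus \{1\}$ and $i_k \ne i_{k+1}$), I would define recursively $y_n = x_0$ and $y_{k-1} = \sigma_k(a_k)(y_k)$, where $\sigma_k \in \{\rho_{i_k}, \rho_{i_k}^{opp}\}$ is chosen so that the direction of displacement is dictated by the sign of $a_k$ in $\prec_{i_k}$ together with the position of $y_k$ relative to $x_0$; the opposite action is invoked precisely when the default realization would move $y_k$ the ``wrong'' way past $x_0$. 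Setting $\Phi(w) := y_0$, I would then take the candidate positive cone to be $P := \{w \ne 1 : \Phi(w) > x_0\}$, and let $g \prec h \iff g^{-1}h \in P$.

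The main obstacle is verifying that $P$ is closed under multiplication, since totality and $P \cap P^{-1} = \emptyset$ are built into the definition. When concatenating reduced words $w$ and $w'$, either $ww'$ is already reduced---in which case the recursion composes cleanly---or else cancellation occurs at the interface, meaning the last letter of $w$ and the first letter of $w'$ lie in a common factor $G_i$ and may collapse or merge. The case analysis at this interface is the heart of the argument, and it is precisely the freedom to choose between $\rho_i$ and $\rho_i^{opp}$ that allows the signs to combine coherently so that $\Phi(ww') > x_0$ whenever both $\Phi(w), \Phi(w') > x_0$. Once $\prec$ is established on $F$, applying a dynamical realization to $(F,\prec)$ yields an embedding $F \hookrightarrow \Homeo^+(\bR)$, which is conjugated and extended as above to land in $\Homeo^+(I)$. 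The case of countably many factors is handled by a direct limit argument: the left orders on the finite subproducts $G_1 * \cdots * G_n$ can be built compatibly and patched to a left order on $F$.
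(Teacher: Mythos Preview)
Your high-level route---reduce to left-orderability of countable free products, then invoke a dynamical realization---is legitimate, and the paper itself remarks just before its proof that the proposition follows from exactly this classical fact (citing Kopytov--Medvedev and Rivas). But the paper's own argument is quite different and does not pass through a single left order at all. It first reduces from $\bigast_i G_i$ to $\Z * G$ via the inclusion $\bigast_i G_i \le \Z * K$ with $K = \langle\{G_i\}\rangle$. Then, setting $H = G * \Z$, it chooses disjoint subintervals $\{I_h\}_{h \in H \setminus \{1\}}$ of $I$ and, for each nontrivial $h$, builds an action $\rho_h \colon H \to \Homeo^+(I_h)$ with $h \notin \ker \rho_h$: writing $h = g_\ell t^{r_\ell} \cdots g_1 t^{r_1}$ in normal form, it places copies of the $G$-action (or its opposite) on disjoint subintervals $J_1,\ldots,J_\ell \subset I_h$ and defines $t$ to shuttle a basepoint $x_0$ through them so that $h(x_0) > x_0$. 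The product $\prod_h \rho_h$ is then faithful. The paper even notes that this action need not have a point with trivial stabilizer, so it is \emph{not} a dynamical realization of any order.

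Your attempted reproof of the classical orderability theorem, however, has a genuine gap. You assert that $P \cap P^{-1} = \varnothing$ and totality are ``built into the definition,'' but neither follows: the recursions computing $\Phi(w)$ and $\Phi(w^{-1})$ run from opposite ends of the word with different adaptive choices of $\sigma_k$, so there is no symmetry forcing $\Phi(w)$ and $\Phi(w^{-1})$ onto opposite sides of $x_0$. (Indeed, if your rule amounts to letting the sign of $\Phi(w)$ be governed by the sign of the leading syllable $a_1$, then $w = ab^{-1}$ and $w^{-1} = ba^{-1}$ both land in $P$ when $a,b$ are positive generators of two $\Z$ factors.) The closure $P \cdot P \subseteq P$ is, as you say, the heart of the matter, but the mechanism you describe---switching to $\rho^{opp}$ when the default action moves $y_k$ the wrong way---does not visibly survive cancellation at the interface of $w$ and $w'$, since the surviving syllables of $w$ are then fed different inputs $y_k$ than they were in the original recursion. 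If you want to pursue the orderability route, either cite the theorem as a black box or follow one of its known proofs; the construction you sketch is not one of them.
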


Proposition \ref{prop:homeo closed} also follows from the general fact that if $G$ and $H$ are countable left orderable groups then so is $G\ast H$, as was shown in~\cite{KM1996,Rivas2012JAlg}. 
We are including a proof as the idea will be needed for Proposition~\ref{prop:ab-homeo}.
Unlike a dynamical realization of $G\ast H$ coming from its natural left order~\cite{DNR2014}, the construction below may not have a point $x_0$ with trivial stabilizer.

\bp
The proof is based on the idea of~\cite{BKK2014}. 
By induction, it suffices to show that if $G_1,G_2,\ldots\in\mathcal{G}$ then $\bigast_{i\ge1}G_i\in\mathcal{G}$. Since $K=\langle \{G_i\mid i\ge1\}\rangle\in\mathcal{G}$ and since $\bigast_{i\ge1}K\le \Z*K$ 
by the normal form theorem for free products,
it suffices to show that if $G\in\mathcal{G}$ then $\Z*G\in\mathcal{G}$.
We let $H=G\ast \bZ$, and choose countably many disjoint subintervals $\{I_h\}_{1\neq h\in H}$ of $I$, each with nonempty interior. 

Let $h\in H\setminus1$ be arbitrary.
We claim that there exists an action $\rho_h\co H\to\Homeo^+(I_h)$ such that $h\not\in\ker\rho_h$.
For some $g_i\in G$ and $r_i\in\bZ$, we can write
\[ h =g_\ell t^{r_\ell} \cdots g_1t^{r_1} .\]
If $h\in\form{t}$ or $h\in G$, then the claim is obvious.
So, possibly after conjugation we may assume that $r_i\ne0$ and $g_i\ne1$ for each $i$. 

We choose disjoint closed intervals \[\{J_1,\ldots,J_\ell\}\subset I_h\] with nonempty interior, $J_i=[p_i,q_i]$ and $q_i<p_j$ for $1\leq i<j\leq \ell$.
For each $1\leq i\leq \ell$, we choose an action of $G$ on $J_i$ such that
 $x_i<g_i(x_i)$ for some $x_i\in J_i$.

We now define an action of $t$ on $I_h$. We choose $\ell$ disjoint closed intervals \[\{L_1,\ldots,L_\ell\}\subset I_h\] of the form $L_i=[c_i,d_i]$. We choose $c_1<x_0<p_1$ and $x_1<d_1<g_1(x_1)$. For $i>1$, we choose points $d_{i-1}<c_i<g_{i-1}(x_{i-1})$ and $x_i<d_i<g_i(x_i)$. We now define $t$ on \[\bigcup_{i=1}^\ell L_i\] so that $t^{r_1}(x_0)=x_1$ and so that $t^{r_i}(g_{i-1}(x_{i-1}))=x_i$. Since the $\{L_1,\ldots,L_\ell\}$ are disjoint, such a choice for the definition of $t$ is possible. It is routine to check that $x_0<h(x_0)=g_\ell(x_\ell)$. 
Since $h$ is not in the kernel of this action, the claim is proved.

By taking disjoint subintervals $\{I_h\}_{h\in H\setminus 1}$ of $\R$ with each $I_h$ equipped with an action of $H$ by $\rho_h$, we obtain the desired embedding
\[\rho = \prod_{h\in H\setminus1}\rho_h \co H\to\prod_{h\in H\setminus1}\Homeo^+(I_h)\le  \Homeo^+(\bR)\cong\Homeo^+(I).\qedhere\]
\ep

We note from the above proof that the restriction of $G$ on each $J_i=[p_i,q_i]$ corresponds to the original action of $G$, or the opposite action of $G$. 
By taking $G=\bZ^2$, we obtain the following.

\begin{prop}\label{prop:ab-homeo}
There exists an embedding
\[ \rho\co \form{a,b,t\mid [a,b]=1}\cong\bZ^2\ast\bZ\to\Homeo^+(I)\]
such that $\supp\rho(a)$ and $\supp\rho(b)$ are disjoint.
\end{prop}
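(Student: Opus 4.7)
The plan is to adapt the construction from the proof of Proposition~\ref{prop:homeo closed} to the specific case $G=\bZ^2=\form{a,b}$, using a model $\bZ^2$--action whose generators already have disjoint supports.

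\textbf{Step 1: A model $\bZ^2$--action with disjoint supports.}
On a closed interval $J$, split $J=J_a\cup J_b$ into two closed subintervals meeting only at one point.  Choose any nontrivial $\alpha\in\Homeo^+(J_a)$ that is the identity at the endpoints of $J_a$, and extend $\alpha$ by the identity to all of $J$; similarly choose $\beta$ supported in $J_b$.  Then $\form{\alpha,\beta}\cong\bZ^2$ with $\supp\alpha\cap\supp\beta=\varnothing$, and $\alpha$ and $\beta$ automatically commute.  Given any $g=a^mb^n\ne1$ in $\bZ^2$ and any $x\in J$, we have that $g$ acts as $\alpha^m$ on $J_a$ and as $\beta^n$ on $J_b$, so we can always arrange (after possibly replacing $\alpha$ and $\beta$ by suitable conjugates, or by reflecting the whole picture via the opposite action) to have $x<g(x)$ for some $x\in J$.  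This flexibility plays the role of choosing between an action and its opposite in the proof of Proposition~\ref{prop:homeo closed}.

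\textbf{Step 2: Mimicking the free product construction.}
Let $H=\bZ^2\ast\form{t}$ and enumerate $H\setminus 1$.  Pick pairwise disjoint subintervals $\{I_h\}_{h\ne 1}$ of $I$.  Given a fixed $h\in H\setminus 1$ of normal form $h=g_\ell t^{r_\ell}\cdots g_1 t^{r_1}$ with $g_i\in\bZ^2\setminus 1$ and $r_i\ne 0$, I subdivide $I_h$ into $\ell$ pairwise disjoint closed intervals $J_1<\cdots<J_\ell$, and install on each $J_i$ a copy of the model $\bZ^2$--action from Step~1 (possibly flipped orientation so that the desired $x_i\in J_i$ satisfies $x_i<g_i(x_i)$).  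The map $t$ is defined exactly as in the proof of Proposition~\ref{prop:homeo closed}: on a separate collection of disjoint intervals $L_1,\dots,L_\ell\subset I_h$, specify $t^{r_i}$ to carry $g_{i-1}(x_{i-1})$ to $x_i$ and extend to a homeomorphism of $I_h$ that is the identity outside $\bigcup L_i$.  The same ping--pong computation as there shows $x_0<h(x_0)$, hence $h$ acts nontrivially on $I_h$.

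\textbf{Step 3: Assembly and verification.}
Taking the product action over all $h$, extended by the identity outside $\bigsqcup_h I_h$, gives a homomorphism
\ba
\rho=\prod_{h\ne 1}\rho_h\co H\to\Homeo^+(I),
\ea
which is injective because every nontrivial $h$ acts nontrivially on $I_h$.  The disjointness of the supports of $\rho(a)$ and $\rho(b)$ is preserved globally: inside each $I_h$, the image of $\bZ^2$ is the model action from Step~1 applied to the subintervals $J_i$, where $\supp\rho(a)\cap J_i\sse J_i^a$ and $\supp\rho(b)\cap J_i\sse J_i^b$ are disjoint, and outside the $J_i$'s (in particular on the $L_i$'s and outside $I_h$) both $\rho(a)$ and $\rho(b)$ are the identity.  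Thus $\supp\rho(a)\cap\supp\rho(b)=\varnothing$, as required.

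The only mildly delicate point is the orientation--flipping in Step~2: one must check that for every $g\in\bZ^2\setminus 1$, at least one of the model action or its orientation reversal produces a point moved strictly to the right.  This is immediate from Step~1, since $g=a^mb^n$ is nontrivial on $J_a$ (if $m\ne 0$) or on $J_b$ (if $n\ne 0$), and reflecting the interval exchanges ``moved right'' with ``moved left.''  No other step requires more than routine bookkeeping.
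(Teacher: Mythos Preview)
Your proposal is correct and follows essentially the same approach as the paper. The paper's argument is simply the observation that in the construction of Proposition~\ref{prop:homeo closed} the restriction of $G$ to each $J_i$ is either the original action or its opposite, so feeding in a $\bZ^2$--action whose generators have disjoint supports (a property preserved under passing to the opposite action) yields the desired embedding; you have spelled out exactly this, including the explicit model $\bZ^2$--action and the verification that disjointness of supports survives the assembly.
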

In particular, the $C^1$ hypothesis in Theorem~\ref{t:tech-main} cannot be lowered to $C^0$.

\section{Complexity of right-angled Artin groups versus diversity of circle actions}\label{sec:semiconj}

In this section, we prove Corollary~\ref{cor:conj}. The proof follows easily from the results in~\cite{KKMj2016}, together with Corollary~\ref{cor:classification}.

In a joint work with Mj~\cite{KKMj2016}, the authors defined a class of finitely generated groups $\mathcal{F}$ and called each group in the class as \emph{liftable--flexible}.
Let us extract the necessary facts which are demonstrated therein.
Recall from the introduction that a \emph{projective} action of a group is a representation into $\PSL(2,\bR)$.

\begin{thm}[cf. Theorem 1.1 of~\cite{KKMj2016}]\label{thm:KKMj}
There exists a class of finitely generated groups $\FF$ satisfying the following.
\be
\item For each $G\in\FF$, there exist uncountably many distinct semiconjugacy classes of faithful projective actions on $S^1$.
\item
Every limit group is in $\FF$; in particular, every free group and every free abelian group is in $\FF$.
\item
If $G,H\in\FF$, then $G*H\in\FF$.
\end{enumerate}
\end{thm}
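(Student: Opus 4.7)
\bigskip

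\noindent\textbf{Proof proposal for Theorem \ref{thm:KKMj}.}

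The plan is to define $\mathcal{F}$ explicitly as the class of finitely generated groups $G$ admitting a faithful representation $\rho_0\co G\to\PSL_2(\bR)$ with two extra features: $\rho_0$ lifts to the universal cover $\widetilde{\PSL_2(\bR)}$, and $\rho_0$ belongs to a continuous family $\{\rho_s\}_{s\in S}$ of such faithful, liftable representations parametrized by a positive-dimensional real semi-algebraic set $S$, where for any $s$ the set of $s'\in S$ with $\rho_{s'}$ semi-conjugate to $\rho_s$ is at most countable. Part (1) would then follow essentially from the definition: by a theorem of Ghys, the semi-conjugacy class of a representation into $\Homeo^+(S^1)$ is determined by its bounded Euler class, and for projective actions one can refine this to rotation numbers of individual elements; the ``at most countable'' clause together with $|S|=2^{\aleph_0}$ forces uncountably many semi-conjugacy classes.

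For part (2), I would handle the two base cases first. For free abelian groups $\bZ^n$, embed them into $\SO(2,\bR)\le\PSL_2(\bR)$ and take $S\subseteq(\bR/\bZ)^n$ to be the set of $n$-tuples of rotation numbers that are rationally independent; faithfulness, liftability (the Euler number is zero for elliptic actions once appropriately normalized), and flexibility are then transparent, and distinct elements of $S$ are detected by the rotation number homomorphism. For free groups $F_k$, use Schottky ping-pong among hyperbolic elements whose translation lengths and axes one can vary in a positive-dimensional family; liftability comes from the fact that Schottky representations into $\PSL_2(\bR)$ always lift. For limit groups I would invoke Sela's hierarchy, presenting a limit group $L$ as an iterated sequence of free products with abelian edge groups and surface-group fillings over base groups already known to be in $\FF$; at each stage the Teichm\"uller/character-variety deformations of the building block contribute a positive-dimensional family of faithful projective representations, and one shows inductively that liftability and the countability clause are preserved.

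For part (3), the closure under free products, I would prove a combination theorem: given $G,H\in\FF$ with families $\{\rho_s^G\}$ and $\{\rho_t^H\}$, select a one-parameter family $\{g_u\}\subset\PSL_2(\bR)$ so that $\rho_s^G$ and $g_u\rho_t^H g_u^{-1}$ sit in ping-pong position with disjoint attracting/repelling sets on $S^1$. The Klein--Maskit combination theorem then produces a faithful projective representation $\rho_{s,t,u}\co G\ast H\to \PSL_2(\bR)$ whose image is a free product of the two images; liftability of the pieces and a triviality check on the resulting cocycle ensure liftability of the combination; and the joint parameter space $(s,t,u)$ is positive-dimensional, with semi-conjugacy classes separated by the rotation numbers of specifically chosen words in the generators.

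The main obstacle I anticipate is step (3): keeping liftability under combination and verifying the countability of each semi-conjugacy fiber in the combined family. The ping-pong setup makes faithfulness and flexibility easy, but the Euler class of a free product action need not vanish a priori, so one must either restrict the family $\{g_u\}$ to a codimension-one slice where the Euler cocycle trivializes, or work in $\widetilde{\PSL_2(\bR)}$ directly and show the ping-pong generators have pre-images that still play ping-pong on the real line. Separating semi-conjugacy classes in the combined family will require a careful choice of distinguishing words whose rotation numbers depend non-trivially on the parameters, which amounts to a transversality argument in the character variety of $G\ast H$.

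\end{document}
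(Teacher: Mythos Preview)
The paper does not prove this theorem. It is stated without proof as a summary of results from the authors' joint work with Mj~\cite{KKMj2016}; the preceding sentence reads ``Let us extract the necessary facts which are demonstrated therein.'' So there is no proof in the present paper to compare your proposal against.

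That said, your outline is broadly consonant with the terminology the paper imports: the class $\FF$ is named \emph{liftable--flexible}, and your proposed definition---faithful projective representations that lift to $\widetilde{\PSL_2(\bR)}$ and sit in a positive-dimensional family with countable semi-conjugacy fibers---matches that name well. Your use of rotation numbers via Ghys's bounded Euler class to separate semi-conjugacy classes, Schottky constructions for free groups, rotation embeddings for $\bZ^n$, and a Klein--Maskit combination for free products are all natural ingredients. The places you flag as obstacles (preserving liftability under combination, and controlling semi-conjugacy fibers in the combined family) are indeed where the real work lies; for limit groups, appealing to Sela's hierarchy is plausible in spirit but would require substantial care to thread liftability and flexibility through each step of the hierarchy. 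If you want to verify your sketch, you should consult~\cite{KKMj2016} directly, since nothing in the present paper confirms or contradicts the details.
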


\begin{lem}\label{l:ab}
Let $A$ and $B$ be nontrivial torsion-free finitely generated subgroups such that $B$ is nonabelian. 
We assume that $A\times B\le \Diffb(S^1)$.
Then there exist finite index normal subgroups $A_1\unlhd A$ and $B_1\unlhd B$ such that 
 $A_1\times B_1$ has a global fixed point on $S^1$.
Furthermore, the closure of $\supp A_1$ is a proper subset of $S^1$.
\end{lem}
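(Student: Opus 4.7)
My plan proceeds in three stages: (1) use the Poincar\'e rotation number to extract finite-index normal subgroups consisting of grounded elements; (2) locate a common fixed point via a Disjointness-based dichotomy; and (3) extract an open subset of $\Fix(A_1)$.

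For stage (1), I would first show that $\rot|_{A\times B}\co A\times B\to\bQ/\bZ$ is a homomorphism with finite image. Since $B$ is nonabelian and torsion-free, it has an infinite-order element $b$; as $A$ centralizes $B$ via the direct product, we have $A\le Z(b)$, so $\rot|_A$ is a homomorphism by Lemma~\ref{l:circle}(\ref{p:hom}). For an infinite-order $a\in A$, Lemma~\ref{l:circle}(\ref{p:rot-irr}) rules out $\rot(a)\notin\bQ$ (otherwise $Z(a)\supseteq B$ would be conjugate into $\operatorname{SO}(2,\bR)$, forcing $B$ abelian), and Lemma~\ref{l:circle}(\ref{p:rot-q}) applied to a grounded power $a^n$ yields $\rot(B)\subseteq\bQ/\bZ$. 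Since $\langle a,b\rangle$ is abelian for $a\in A, b\in B$, rotation number is additive across the product, and finite generation makes the image finite. Setting $A_1=\ker(\rot|_A)$ and $B_1=\ker(\rot|_B)$ produces the desired finite-index normal subgroups, all of whose elements are grounded. Moreover, for infinite-order $a_0\in A_1$ and $b_0\in B_1$, the proof of Lemma~\ref{l:circle}(\ref{p:hom}) shows that $A_1\le Z(b_0)_0$ fixes $\partial\Fix(b_0)$ pointwise, and symmetrically $B_1$ fixes $\partial\Fix(a_0)$ pointwise.

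For stage (2), I would apply the Disjointness Condition (Lemma~\ref{l:disj-abel}) to each commuting grounded pair $a\in A_1, b\in B_1$ of infinite order: every component of $\supp a$ either equals or is disjoint from every component of $\supp b$. Two cases arise. If some such pair shares a component $J$, then $\partial J\subseteq \partial\Fix(a)\cap\partial\Fix(b)\subseteq \Fix(A_1)\cap\Fix(B_1)$, giving a common fixed point. Otherwise no shared components exist, whence $\supp A_1\cap\supp B_1=\varnothing$; since $S^1$ is connected and $\supp A_1$, $\supp B_1$ are nonempty disjoint open sets, their union is proper, and the nonempty closed complement lies in $\Fix(A_1)\cap\Fix(B_1)$.

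For stage (3), the disjoint-supports case is immediate: two disjoint nonempty open subsets $U, V$ of any space satisfy $V\cap\overline{U}=\varnothing$, so $\supp B_1$ exhibits a nonempty open subset of $S^1\setminus\overline{\supp A_1}$. The shared-component case is where I expect the main obstacle. Here Lemma~\ref{l:circle}(\ref{p:zz}) applied to $b_0\in B_1$ and $H=A_1\le Z(b_0)$ yields $\supp b_0\cap\supp[A_1,A_1]=\varnothing$, so $A_1$ acts on each component of $\supp b_0$ through a torsion-free finitely generated abelian quotient. The hard part will be exploiting this abelian structure to refine $A_1$ (keeping finite-index normality in $A$) so that its support is disjoint from at least one component of $\supp b_0$, thereby reducing to the disjoint-supports case and ruling out the degenerate scenario $\supp B_1\subseteq \supp A_1$ with $\overline{\supp A_1}=S^1$.
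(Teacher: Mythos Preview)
Your Stage (1) is correct and matches the paper's argument essentially verbatim: both establish that $\rot$ restricts to a homomorphism on $A\times B$ with finite image, and take $A_1,B_1$ to be the kernels.

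The genuine gap is in Stage (3), and it stems from applying Lemma~\ref{l:circle}(\ref{p:zz}) with the roles of $A$ and $B$ reversed. You take $b_0\in B_1$ and $H=A_1$ to obtain $\supp b_0\cap\supp[A_1,A_1]=\varnothing$; but the hypothesis is that $B$, not $A$, is nonabelian, so $[A_1,A_1]$ may well be trivial and this conclusion is vacuous. This is why you are then forced into the vague ``refinement of $A_1$'' plan, which you correctly sense has no clear mechanism. The paper instead applies the same lemma in the other direction: for each grounded $a\in A_1\setminus\{1\}$ take $H=B_1\le Z(a)$ (all of whose elements are grounded) to obtain $\supp a\cap\supp[B_1,B_1]=\varnothing$, and hence $\supp A_1\cap\supp[B_1,B_1]=\varnothing$. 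Since $B_1$ is nonabelian, $\supp[B_1,B_1]$ is a nonempty open set disjoint from $\supp A_1$, so $\overline{\supp A_1}\subsetneq S^1$ with no case analysis at all.

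This also makes your Stage (2) dichotomy unnecessary. Once $\overline{\supp A_1}$ is proper, the Disjointness Condition shows (as in your own Stage (1) remark) that each $b\in B_1$ preserves every component of $\supp a$ for every $a\in A_1$, so $B_1$ fixes $\partial\supp A_1$ pointwise; since $A_1$ does too, any point of $\partial\supp A_1$ is the required global fixed point. In short: swap the roles of $A_1$ and $B_1$ in your key application of Lemma~\ref{l:circle}(\ref{p:zz}), and the ``hard part'' evaporates.
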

\bp
Choose an arbitrary element $a\in A\setminus 1$.
Since the centralizer $Z(a)$ is nonabelian and $A$ is torsion-free, 
Lemma~\ref{l:circle} implies that $\rot Z(a)\sse\bQ$.
Since $B$ and $a\times B$ are subsets of $Z(a)$, we see that $\rot(B), \rot(a\times B)\sse\bQ$.
This shows that $\rot(A\times B)\sse\bQ$.
 
For each $a\in A$ and $b\in B$, note that 
 \[a,b,ab\in Z(a).\]
By Lemma~\ref{l:circle} (\ref{p:hom}), it follows that $\rot(a)+\rot(b)=\rot(ab)$.
In particular, we have a homomorphism 
\[\rot\co A\times B\to \bQ.\]
Since $A\times B$ is finitely generated, the image of the above map is finite.
So we can find finite index normal subgroups $A_1\unlhd A$ and $B_1\unlhd B$ such that every element of $A_1\times B_1$ is grounded. 

Let $a\in A_1\setminus1$.
Since $B_1$ centralizes $a$, it acts on each component of $\supp a$ as an abelian group; see Lemma~\ref{l:disj-abel}. It follows that $B_1$ preserves the set $\supp A_1$ and
\[\supp A_1\cap\supp [B_1,B_1]=\varnothing.\]
Since $B_1$ is nonabelian, we see $\overline{\supp A_1}\ne S^1$. 
It follows that $\partial\supp A_1$ is a global fixed point of $A_1\times B_1$.
\ep

\begin{proof}[Proof of Corollary~\ref{cor:conj}]

(1) 
If $\gam\in\mathcal{K}_2$, then $A(\gam)$ is a finite free product of free abelian groups. Combining parts (2) and (3) of Theorem~\ref{thm:KKMj}, we see $A(\gam)$ is in the class $\FF$. The  first part of the same theorem implies the desired conclusion.

(2) Suppose that $\gam\in\mathcal{K}_3\setminus\mathcal{K}_2$, so that $A(\gam)$ decomposes as a nontrivial direct product $A(\gam_1)\times A(\gam_2)$, where at least one of $A(\gam_1)$ and $A(\gam_2)$ is nonabelian. Say $A(\gam_2)$ is nonabelian. 
By Lemma~\ref{l:ab}, there exist finite index normal subgroups $G\unlhd A(\gam_1)$
and $H\unlhd A(\gam_2)$ 
such that $G\times H$ has a global fixed point; in particular, $A(\gam)$ has a finite orbit. 
We put
\[X = \overline{\supp G}, \quad Y = S^1\setminus X.\]
Since $G$ is normal in $A(\gam)$ we see that 
$X$ and $Y$ are $A(\gam)$--invariant sets of $S^1$, which are proper by the same lemma.
This implies that $X$ and $Y$ have nonempty interiors, and hence $A(\gam)$ does not have a dense orbit.

(3) This follows immediately from Corollary~\ref{cor:classification}.
\end{proof}

We briefly remark that if a group acts on $S^1$ with a global fixed point then it is semi--conjugate to a trivial action, and if it acts with a periodic point then the action is semi--conjugate to a rational rotation group. Corollary~\ref{cor:conj} implies that if $\gam\in\mathcal{K}_3\setminus\mathcal{K}_2$ then $A(\gam)$ admits only countably many semi--conjugacy classes of faithful actions, and it is not difficult to realize one semi--conjugacy class for each rational rotation.

Observe that since every right-angled Artin group surjects to $\Z$, every right-angled Artin group admits uncountably many distinct semi--conjugacy classes of non--faithful actions on $S^1$, so only faithful actions are interesting for our purposes.

\section{Thompson's groups}
Let $F$ and $T$ be the Thompson's groups acting on the closed interval and on the circle, respectively~\cite{CFP1996}. Recall that these are the groups of piecewise linear homeomorphisms of the interval and circle respectively, with dyadic breakpoints and all slopes given by powers of two. It is known that the standard action of $T$ is conjugate to a $C^\infty$ action~\cite{GS1987}.
The restriction of such a smooth action yields a smooth action of $F$ on a closed interval.
On the other hand, the groups $F*\Z$ and $T\ast\bZ$ do not admit any smooth actions on the circle, or indeed on any compact one--manifold (cf. Corollary~\ref{cor:Thompson}):

\begin{cor}
If $M$ is a compact one-manifold,
then $F\ast\bZ$ and $T\ast\bZ$ are not subgroups of $\Diff^{1+\mathrm{bv}}(M)$.
\end{cor}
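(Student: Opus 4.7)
The plan is to apply Theorem~\ref{thm:main} with $G=F$. This requires two verifications: that $F$ is not virtually metabelian, and that $F\ast\bZ$ contains a copy of $(F\times\bZ)\ast\bZ$.

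To rule out virtual metabelianness of $F$, I would use the well-known fact that the commutator subgroup $[F,F]$ is an infinite simple non-abelian group. Given any finite index subgroup $H\le F$, the intersection $H\cap[F,F]$ has finite index in $[F,F]$, and so by simplicity of $[F,F]$ it must equal $[F,F]$; in particular $[F,F]\le H$. Being simple and non-abelian, $[F,F]$ is perfect, i.e.\ $[[F,F],[F,F]]=[F,F]$. Therefore the second derived subgroup $H^{(2)}$ contains $[F,F]$, which is non-abelian, so $H$ fails to be metabelian.

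For the embedding, I realize $F\times\bZ$ inside $F$ by regarding $F$ as piecewise linear homeomorphisms of $[0,1]$ with dyadic breakpoints, and taking a copy of $F$ supported in $[0,1/2]$ together with a cyclic subgroup of $F$ supported in $[1/2,1]$; these two factors commute by disjointness of supports, and the resulting homomorphism $F\times\bZ\to F$ is injective. By the normal form theorem for free products, whenever $A_i\le B_i$ the natural map $A_1\ast A_2\to B_1\ast B_2$ is injective, and thus $(F\times\bZ)\ast\bZ\le F\ast\bZ$. Theorem~\ref{thm:main} then precludes $F\ast\bZ\le\Diffb(M)$. For $T\ast\bZ$, I invoke the standard embedding $F\hookrightarrow T$ realizing $F$ as the stabilizer in $T$ of any point $p\in S^1$: once $p$ is removed, $S^1$ becomes an open interval on which $\Stab_T(p)$ acts by the usual piecewise linear action of $F$. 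Hence $F\ast\bZ\le T\ast\bZ$, and the conclusion for $F\ast\bZ$ carries over to $T\ast\bZ$.

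The only real obstacle is the non-virtual-metabelianness of $F$; once simplicity of $[F,F]$ is taken as known, the result follows by a formal application of Theorem~\ref{thm:main} together with standard facts about $F$ and $T$.
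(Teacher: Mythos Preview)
Your proposal is correct and follows essentially the same route as the paper: reduce $T\ast\bZ$ to $F\ast\bZ$ via $F\le T$, embed $F\times\bZ$ into $F$ using disjoint supports on $[0,1/2]$ and $[1/2,1]$, and rule out virtual metabelianness of $F$ via the simplicity of $[F,F]$, then invoke Theorem~\ref{thm:main}. Your write-up is in fact slightly more explicit than the paper's (e.g.\ spelling out why an infinite simple group has no proper finite-index subgroups, and citing the normal form theorem for the free-product embedding), but the underlying argument is identical.
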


\bp
Since $F\le T$, it suffices to prove the corollary for $F*\Z$ only. In order to apply Theorem~\ref{thm:main}, it suffices to show that $F\times\Z\le F$ and that $F$ is not virtually metabelian, whence the conclusion will be immediate. These claims follow immediately from the well--known facts that $F$ is not virtually solvable and that $F\times F\le F$, and we make these details explicit below for the reader's convenience.

Since $F$ is a group of homeomorphisms of the interval, it is immediate that it is torsion--free. Moreover, conjugating $F$ by the homeomorphism of $\R$ given by $x\mapsto x/2$ scales $F$ to be the group of piecewise linear homeomorphisms with dyadic breakpoints and slopes given by powers of two, only scaled to act on the interval $[0,1/2]$. It follows that we may realize $F\times F\le F$, since we can realize one copy of $F$ on $[0,1/2]$ and a second one on the interval $[1/2,1]$, with the points $\{0,1/2,1\}$ globally invariant. It follows that $\Z\times F\le F$, since $F$ is torsion--free.

To see that $F$ is not virtually metabelian, we use the standard fact that $[F,F]$ is an infinite simple group and that $Z(F)=\{1\}$ (cf.~\cite{CFP1996}). It follows that if $H\le F$ is a finite index subgroup then $[F,F]\le H$. Since $H$ contains an infinite simple group, it cannot be solvable, much less metabelian.
\ep

\section*{Acknowledgements}
The authors thank the anonymous referees for many careful comments which improved the exposition of the paper. The first author is supported by Samsung Science and Technology Foundation (SSTF-BA1301-06).
The second author is partially supported by Simons Foundation Collaboration Grant number 429836.

\def\cprime{$'$} \def\soft#1{\leavevmode\setbox0=\hbox{h}\dimen7=\ht0\advance
  \dimen7 by-1ex\relax\if t#1\relax\rlap{\raise.6\dimen7
  \hbox{\kern.3ex\char'47}}#1\relax\else\if T#1\relax
  \rlap{\raise.5\dimen7\hbox{\kern1.3ex\char'47}}#1\relax \else\if
  d#1\relax\rlap{\raise.5\dimen7\hbox{\kern.9ex \char'47}}#1\relax\else\if
  D#1\relax\rlap{\raise.5\dimen7 \hbox{\kern1.4ex\char'47}}#1\relax\else\if
  l#1\relax \rlap{\raise.5\dimen7\hbox{\kern.4ex\char'47}}#1\relax \else\if
  L#1\relax\rlap{\raise.5\dimen7\hbox{\kern.7ex
  \char'47}}#1\relax\else\message{accent \string\soft \space #1 not
  defined!}#1\relax\fi\fi\fi\fi\fi\fi}
\providecommand{\bysame}{\leavevmode\hbox to3em{\hrulefill}\thinspace}
\providecommand{\MR}{\relax\ifhmode\unskip\space\fi MR }
\providecommand{\MRhref}[2]{%
  \href{http://www.ams.org/mathscinet-getitem?mr=#1}{#2}
}
\providecommand{\href}[2]{#2}

\end{document}